\providecommand{\U}[1]{\protect\rule{.1in}{.1in}}
\providecommand{\U}[1]{\protect\rule{.1in}{.1in}}
\journal{\ldots}
\newtheorem{theorem}{Theorem}
\newtheorem{acknowledgement}[theorem]{Acknowledgement}
\newtheorem{definition}[theorem]{Definition}
\newtheorem{lemma}[theorem]{Lemma}
\newtheorem{proposition}[theorem]{Proposition}
\newtheorem{remark}[theorem]{Remark}
\newenvironment{proof}[1][Proof]{\noindent\textbf{#1.} }{\ \rule{0.5em}{0.5em}}
\begin{document}
\begin{frontmatter}
\title{On the continuity of the probabilistic representation of a semilinear Neumann--Dirichlet problem}
\author{Lucian Maticiuc$^{a,b}$, Aurel R\u{a}\c{s}canu$^{a,c}$}
\address{$^{a}$ Faculty of Mathematics, \textquotedblleft Alexandru
Ioan Cuza\textquotedblright\ University \\ Carol 1 Blvd., no. 11,
Ia\c{s}i, Romania \medskip \\$^{b}$~Department of Mathematics,
\textquotedblleft Gheorghe Asachi\textquotedblright\ Technical University of Ia\c{s}i\\
Carol I Blvd., no. 11, 700506, Romania \medskip \\$^{c}$ \textquotedblleft Octav Mayer\textquotedblright\ Mathematics Institute of the Romanian Academy, Iasi branch \\ Carol I Blvd., no. 8, Iasi, 700506, Romania}
\begin{abstract}
In this article we prove the continuity of the deterministic function $u:%
\left[ 0,T\right] \times \mathcal{\bar{D}}\rightarrow \mathbb{R}$, defined
by $u\left( t,x\right) :=Y_{t}^{t,x}$, where the process $%
(Y_{s}^{t,x})_{s\in \left[ t,T\right] }$ is given by the generalized
multivalued backward stochastic differential equation:%
\begin{equation*}
\left\{
\begin{array}{r}
-dY_{s}^{t,x}+\partial \varphi(Y_{s}^{t,x})ds+\partial\psi(Y_{s}^{t,x})dA_{s}^{t,x}\ni f(s,X_{s}^{t,x},Y_{s}^{t,x})ds\medskip \\
+g(s,X_{s}^{t,x},Y_{s}^{t,x})dA_{s}^{t,x}-Z_{s}^{t,x}dW_{s}~,\;t\leq
s < T,\medskip  \\
\multicolumn{1}{l}{Y_{T}=h(X_{T}^{t,x}).}%
\end{array}%
\right.
\end{equation*}%
The process $(X_{s}^{t,x},A_{s}^{t,x})_{s\geq t}$ is the
solution of a stochastic differential equation with reflecting boundary
conditions.
\end{abstract}
\end{frontmatter}

%\textbf{AMS Classification subjects: }\ldots \medskip

\textbf{Keywords or phrases:} Feynman--Kac formula; Reflected diffusion
processes; Continuity w.r.t. initial data; Neumann--Dirichlet boundary conditions.

\renewcommand{\thefootnote}{\fnsymbol{footnote}}
\footnotetext{\textit{{\scriptsize E-mail addresses:}}
{\scriptsize lucian.maticiuc@uaic.ro (Lucian Maticiuc), aurel.rascanu@uaic.ro
(Aurel R\u{a}\c{s}canu).}}

\section{Introduction}

It is well known that the probability methods represent often a tool to infer
the results from the deterministic analysis by solving some stochastic
equations. We refer here to the Feynman--Kac formula which allows to represent
the solution of the parabolic equation $\dfrac{\partial u}{\partial t}\left(
t,x\right)  -\dfrac{1}{2}\Delta u\left(  tx\right)  +c\,u\left(  t,x\right)
=h\left(  t,x\right)  $, $\left(  t,x\right)  \in(0,T]\times\mathbb{R}^{d}$
with the initial condition $u\left(  0,x\right)  =\kappa\left(  x\right)  $,
$x\in\mathbb{R}^{d}$. A similar formula occurs in the case of a problem with
boundary conditions (see, e.g., \cite{hsu/85}). With the advent of the
backward stochastic differential equations (BSDEs for short) it has become
possible to extend the Feynman--Kac formula to semilinear parabolic or
elliptic partial differential equations (PDEs for short) with various type of
boundary conditions (see \cite{pe/91}, \cite{pa-pe/92}, \cite{hu/93},
\cite{pa-pr-ra/97} and \cite{da/97}). In 1998 Pardoux \& Zhang proved in
\cite{pa-zh/98} a probabilistic formula for the viscosity solution of a system
of semilinear PDEs with Neumann boundary condition%
\[
\left\{
\begin{array}
[c]{l}%
\dfrac{\partial u_{i}}{\partial t}(t,x)+\mathcal{L}_{t}u_{i}\left(
t,x\right)  +f\big(t,x,u(t,x),\left(  \nabla u_{i}\sigma\right)  \left(
t,x\right)  \big)=0,~t\in(0,T],\;x\in\mathcal{D},\medskip\\
\dfrac{\partial u_{i}}{\partial n}(t,x)=g\big(t,x,u(t,x)\big),~t\in
\lbrack0,T],\;x\in\mathrm{Bd}\left(  \mathcal{D}\right)  ,\vspace*{2mm}\\
u(T,x)=h(x),\;x\in\overline{\mathcal{D}},\;i=\overline{1,k}\,,
\end{array}
\right.
\]
where $\mathcal{L}_{t}$ is a second--order differential operator defined by%
\[
\mathcal{L}_{t}v(x)=\dfrac{1}{2}\mathrm{Tr}\big[\sigma(t,x)\sigma^{\ast
}(t,x)D^{2}v(x)\big]+\big\langle b(t,x),\nabla v(x)\big\rangle,\;\text{for
}v\in C^{2}(\mathbb{R}^{d}).
\]
and $\mathcal{D}$ is an open connected bounded subset of $\mathbb{R}^{d}$ of
the form $\mathcal{D}=\left\{  x\in\mathbb{R}^{d}:\ell\left(  x\right)
<0\right\}  $ with the boundary $\mathrm{Bd}\left(  \mathcal{D}\right)
=\left\{  x\in\mathbb{R}^{d}:\ell\left(  x\right)  =0\right\}  $, where
$\ell\in C_{b}^{3}\left(  \mathbb{R}^{d}\right)  $ and $\left\vert \nabla
\ell\left(  x\right)  \right\vert =1,\;$for all $x\in\mathrm{Bd}\left(
\mathcal{D}\right)  $. The outward normal derivative of a function $v\in
C^{1}\left(  \mathrm{Bd}\left(  \mathcal{D}\right)  \right)  $ is given by
$\dfrac{\partial v\left(  x\right)  }{\partial n}=\left\langle \nabla
\ell\left(  x\right)  ,\nabla v\left(  x\right)  \right\rangle $ for all
$x\in\mathrm{Bd}\left(  \mathcal{D}\right)  .$

Afterwards, in \cite{ma-ra/10}, the authors proved a generalized Feynman--Kac
formula in order to represent the viscosity solution of the following
parabolic variational inequality with a mixed nonlinear multivalued
Neumann--Dirichlet boundary condition driven by subdifferential operators
${\partial\varphi}$ and ${\partial\psi}$ (associated to the convex lower
semicontinuous (l.s.c. for short) functions $\varphi,\psi:\mathbb{R\rightarrow
(-}\infty,+\infty]$):
\begin{equation}
\left\{
\begin{array}
[c]{l}%
\dfrac{\partial u(t,x)}{\partial t}-\mathcal{L}_{t}u\left(  t,x\right)
+{\partial\varphi}\big(u(t,x)\big)\ni f\big(t,x,u(t,x)\big),~t>0,\;x\in
\mathcal{D},\medskip\\
\dfrac{\partial u(t,x)}{\partial n}+{\partial\psi}\big(u(t,x)\big)\ni
g\big(t,x,u(t,x)\big),~t>0,\;x\in\mathrm{Bd}\left(  \mathcal{D}\right)
,\vspace*{2mm}\\
u(0,x)=h(x),\;x\in\overline{\mathcal{D}}.
\end{array}
\right.  \label{multivalued PDE}%
\end{equation}
More precisely, it was proved that the deterministic function $u:[0,T]\times
\overline{\mathcal{D}}\rightarrow\mathbb{R}$, given by the probabilistic
representation formula%
\begin{equation}
u\left(  t,x\right)  :=Y_{t}^{t,x},\;\;(t,x)\in\lbrack0,T]\times
\overline{\mathcal{D}}, \label{repres. formula}%
\end{equation}
where $(Y_{s}^{t,x})_{s\in\left[  t,T\right]  }$ is the unique solution of a
proper backward stochastic variational inequality, is the unique viscosity
solution of the above multivalued problem.

A link between backward stochastic variational inequalities with oblique
subgradients and the viscosity solution for a semilinear parabolic variational
inequality of type (\ref{multivalued PDE}), without boundary conditions but
featuring an oblique reflection, was constructed in \cite{ga-ra-ro/14}.
Another generalization was recently made in \cite{ni 1/14, ni 2/14} by
considering a fully coupled forward--backward stochastic variational
inequality and its associate generalized quasilinear parabolic variational
inequality of type (\ref{multivalued PDE}) on the whole space.$\smallskip$

The aim of this paper is to provide a proof for the continuity of the function
$\left(  t,x\right)  \mapsto u\left(  t,x\right)  =Y_{t}^{t,x}$. We mention
that the proof of the continuity given in \cite[Corollary 14--(c)]{ma-ra/10}
is not correct, since inequality (40) from \cite[Proposition 13]{ma-ra/10} has
a missing term (for the correct statement of \cite[Proposition 13]{ma-ra/10}
see the last section of this paper). Our main result constitutes the correct
proof of point $\left(  c\right)  $ of Corollary 14 from \cite{ma-ra/10}.

In order to obtain the principal result we should assume the additional
condition (\ref{Assumpt. 7}) (see the next section). Moreover, we restrict
ourselves to the case where coefficient $f$ does not depend on $Z.$ Based on
the remark that $\left(  t,x\right)  \mapsto Y_{t}^{t,x}$ is a deterministic
function, the idea used in this paper is to prove that, for any $\left(
t_{n},x_{n}\right)  \rightarrow\left(  t,x\right)  ,$ the sequence $\left(
Y^{n}\right)  _{n\in\mathbb{N}}:=\left(  Y^{t_{n},x_{n}}\right)
_{n\in\mathbb{N}}$ is tight with respect to the \textrm{S}--topology
(Jakubowski's topology \cite{ja/97}) on the space $\mathbb{D}\left(  \left[
0,T\right]  ,\mathbb{R}^{m}\right)  $ of c\`{a}dl\`{a}g functions and changing
the probability space to have the convergence almost sure on a subsequence.
Similar ideas can be found in \cite[Section 4]{ba-ma-za/13}. We work with the
\textrm{S}--topology because we need the continuity of the application
$\mathbb{D}\ni y\longmapsto\int_{0}^{s}g\left(  r,y\left(  r\right)  \right)
dA_{r}$, where $h$ is a continuous function and $A$ is a continuous
non--decreasing function. This property is not true in Meyer--Zheng topology
(unless the measure induced by $A$ is absolutely continuous with respect to
the Lebesgue measure).$\medskip$

We emphasize that, in our opinion, the techniques presented in our paper are
very useful in various cases presented in many other papers; starting with
\cite{pa-zh/98} the viscosity solution of various types of parabolic PDEs with
Neumann boundary condition, via probabilistic methods, represent the subject
of: \cite{bo-ca/04, re-xi/06, bo-ca-mr/07, ri/09, di-ou/10, ra-zh/10,
re-ot/10, am-mr/13}, and all of them used the continuity of the function
$\left(  t,x\right)  \mapsto u\left(  t,x\right)  $ defined through the
solution of a suitable backward equation.\medskip

The article is organized as it follows: In Section 2 we recall the notations,
assumptions and the existence results for the forward--backward stochastic
system envisaged by our work. Section 3 presents the main result of the paper,
while Section 4, Annexes, deals with some auxiliary results which concern
bounded variation functions in the c\`{a}dl\`{a}g case as well as passing to
the limit theorems. The last section, Erratum, presents the new statement of
Proposition 13 from \cite{ma-ra/10}.

\section{Preliminaries}

We adopt the notations and assumptions used in \cite{ma-ra/10}.$\smallskip$

Throughout this paper, $\left(  W_{t}\right)  _{t\geq0}$ denotes a
$d$--dimensional standard Brownian motion defined on a complete probability
space $(\Omega,\mathcal{F},\mathbb{P})$. For $s,t\geq0$, $\mathcal{F}_{s}^{t}$
denotes the $\sigma$--algebra $\sigma(\mathbb{1}_{N},W_{r}-W_{t};t\leq r\leq
s\vee t,N\in\mathcal{N)}$, where $\mathcal{N}$ is the set of $\mathbb{P}%
$--null events of $\mathcal{F}$.

Let $\mathcal{D}$ be a open connected bounded subset of $\mathbb{R}^{d}$ of
the form%
\[
\mathcal{D}=\{x\in\mathbb{R}^{d}:\ell\left(  x\right)  <0\},\quad
\mathrm{Bd}\left(  \mathcal{D}\right)  =\{x\in\mathbb{R}^{d}:\ell\left(
x\right)  =0\},
\]
where $\ell\in C_{b}^{3}(\mathbb{R}^{d})$, $\left\vert \nabla\ell\left(
x\right)  \right\vert =1,\;$for all $x\in\mathrm{Bd}\left(  \mathcal{D}%
\right)  $.

The stochastic process $(Y_{s}^{t,x})_{s\in\left[  0,T\right]  }$ from the
representation formula (\ref{repres. formula}) is defined through the
following stochastic problem.

We fix $T>0.$ For each $\left(  t,x\right)  \in\left[  0,T\right]
\times\mathcal{\bar{D}}$ arbitrary fixed, let%
\[
(X_{s}^{t,x},A_{s}^{t,x},Y_{s}^{t,x},Z_{s}^{t,x},U_{s}^{t,x},V_{s}%
^{t,x}):\Omega\rightarrow\mathbb{R}^{d}\times\mathbb{R\times R}^{m}%
\times\mathbb{R}^{m\times d}\times\mathbb{R}^{m}\times\mathbb{R}^{m},\quad
s\in\left[  0,T\right]
\]
be a sextuple of $\mathcal{F}_{s}^{t}$--progressively measurable stochastic
processes (p.m.s.p. for short) such that:

\begin{itemize}
\item $X^{t,x}:\Omega\times\left[  0,T\right]  \rightarrow\mathcal{\bar{D}}$
and $Y^{t,x}:\Omega\times\left[  0,T\right]  \rightarrow\mathbb{R}^{m}$ are
continuous stochastic processes,

\item $A^{t,x}:\Omega\times\left[  0,T\right]  \rightarrow\mathbb{R}_{+}$\ is
an increasing continuous stochastic process

\item $%
%TCIMACRO{\dint _{0}^{T}}%
%BeginExpansion
{\displaystyle\int_{0}^{T}}
%EndExpansion
\left(  |U_{r}^{t,x}|dr+|V_{r}^{t,x}|dA_{r}^{t,x}+|Z_{r}^{t,x}|^{2}dr\right)
<\infty$, $\mathbb{P}$--a.s. and

$(X_{s}^{t,x},A_{s}^{t,x},Y_{s}^{t,x},Z_{s}^{t,x},U_{s}^{t,x},V_{s}%
^{t,x})_{s\in\left[  0,T\right]  }$ satisfies $\mathbb{P}$--a.s. the following
decoupled forward--backward stochastic differential system:%
\begin{equation}%
\begin{array}
[c]{ll}%
\left(  a\right)  & \displaystyle X_{s}^{t,x}=x+\int_{t}^{s\vee t}%
b(r,X_{r}^{t,x})dr+\int_{t}^{s\vee t}\sigma(r,X_{r}^{t,x})dW_{r}-\int
_{t}^{s\vee t}\nabla\ell(X_{r}^{t,x})dA_{r}^{t,x},\medskip\\
\left(  b\right)  & \displaystyle A_{s}^{t,x}=\int_{t}^{s\vee t}%
\mathbb{1}_{\{X_{r}^{t,x}\in\mathrm{Bd}\left(  \mathcal{D}\right)  \}}%
dA_{r}^{t,x}\,,\medskip\\
\left(  c\right)  & \displaystyle Y_{s}^{t,x}+\int_{s\vee t}^{T}U_{r}%
^{t,x}dr+\int_{s\vee t}^{T}V_{r}^{t,x}\,dA_{r}^{t,x}=h(X_{T}^{t,x}%
)+\int_{s\vee t}^{T}\,f\left(  r,X_{r}^{t,x},Y_{r}^{t,x}\right)  dr\medskip\\
& \displaystyle\quad\quad\quad\quad\quad\quad\quad\quad\quad\quad+\int_{s\vee
t}^{T}\,g\left(  r,X_{r}^{t,x},Y_{r}^{t,x}\right)  \,dA_{r}^{t,x}%
-({M_{T}^{t,x}-M_{s}^{t,x})},\medskip\\
\left(  d\right)  & \displaystyle%
%TCIMACRO{\dint _{s_{1}}^{s_{2}}}%
%BeginExpansion
{\displaystyle\int_{s_{1}}^{s_{2}}}
%EndExpansion
\langle v-Y_{r}^{t,x},U_{r}^{t,x}\rangle dr+%
%TCIMACRO{\dint _{s_{1}}^{s_{2}}}%
%BeginExpansion
{\displaystyle\int_{s_{1}}^{s_{2}}}
%EndExpansion
\varphi\left(  Y_{r}^{t,x}\right)  dr\leq%
%TCIMACRO{\dint _{s_{1}}^{s_{2}}}%
%BeginExpansion
{\displaystyle\int_{s_{1}}^{s_{2}}}
%EndExpansion
{\varphi}\left(  v\right)  dr,\medskip\\
\left(  e\right)  & \displaystyle%
%TCIMACRO{\dint _{s_{1}}^{s_{2}}}%
%BeginExpansion
{\displaystyle\int_{s_{1}}^{s_{2}}}
%EndExpansion
\langle v-Y_{r}^{t,x},V_{r}^{t,x}\rangle dA_{r}^{t,x}+%
%TCIMACRO{\dint _{s_{1}}^{s_{2}}}%
%BeginExpansion
{\displaystyle\int_{s_{1}}^{s_{2}}}
%EndExpansion
\psi\left(  Y_{r}^{t,x}\right)  dA_{r}^{t,x}\leq%
%TCIMACRO{\dint _{s_{1}}^{s_{2}}}%
%BeginExpansion
{\displaystyle\int_{s_{1}}^{s_{2}}}
%EndExpansion
{\psi}\left(  v\right)  dA_{r}^{t,x},
\end{array}
\label{FBSDE}%
\end{equation}
for all $s,s_{1},s_{2}\in\left[  0,T\right]  ,$ such that $0\leq t\leq
s_{1}\leq s_{2}$ and any $v\in\mathbb{R}^{m}$, where%
\[
{M_{s}^{t,x}:={\int_{t}^{s\vee t}}Z_{r}^{t,x}{dW_{r}}=\int_{t}^{s\vee t}}%
\hat{Z}_{r}^{t,x}d{M_{r}^{X^{t,x}},}%
\]
with ${M_{s}^{X^{t,x}}=\int_{t}^{s\vee t}\sigma({X_{r}^{t,x})}dW_{r}}$ (the
martingale part of the reflected diffusion process) and $Z_{r}^{t,x}%
=({\sigma({X_{r}^{t,x}))}}^{\ast}\hat{Z}_{r}^{t,x}$.
\end{itemize}

Here above we consider the extension $U_{s}^{t,x}=V_{s}^{t,x}=0$ for $0\leq
s<t.$ We notice that, for $0\leq s\leq t$, $X_{s}^{t,x}=x$, $A_{s}^{t,x}=0$,
$Y_{s}^{t,x}=Y_{t}^{t,x}$, $Z_{s}^{t,x}=0$ and the last two conditions from
(\ref{FBSDE}) mean that $U_{s}^{t,x}\left(  \omega\right)  \in\partial
\varphi(Y_{s}^{t,x}\left(  \omega\right)  )$, $ds$--a.e. on $\left[
t,T\right]  $ and $V_{s}^{t,x}\left(  \omega\right)  \in\partial\psi
(Y_{s}^{t,x}\left(  \omega\right)  )$, $dA_{s}$--a.e. on $\left[  t,T\right]
$ , $\mathbb{P}$--a.s.

If we denote%
\[
K_{s}^{1,t,x}:=\int_{0}^{s}U_{r}^{t,x}dr\quad\text{and}\quad K_{s}%
^{2,t,x}:=\int_{0}^{s}V_{r}^{t,x}\,dA_{r}^{t,x},
\]
then, from conditions (\ref{FBSDE}--$d,e$) and using Proposition
\ref{equiv-subdiff}, we obtain that, as measure on $\left[  0,T\right]  ,$%
\begin{equation}
dK_{s}^{1,t,x}\in\partial\varphi(Y_{s}^{t,x})ds,~\mathbb{P}\text{--a.s.}%
\quad\text{and}\quad dK_{s}^{2,t,x}\in\partial\psi(Y_{s}^{t,x})dA_{s}%
^{t,x},\;\mathbb{P}\text{--a.s.} \label{subdiff apartenence 7}%
\end{equation}
Our aim is to prove the continuity of the deterministic function%
\[
u:\left[  0,T\right]  \times\mathcal{\bar{D}}\rightarrow\mathbb{R}^{m},\quad
u\left(  t,x\right)  :=Y_{t}^{t,x}.
\]
The assumptions required along the paper are:

\begin{itemize}
\item The functions%
\begin{equation}%
\begin{array}
[c]{l}%
b:\left[  0,\infty\right)  \times\mathbb{R}^{d}\rightarrow\mathbb{R}^{d}%
,\quad\sigma:\left[  0,\infty\right)  \times\mathbb{R}^{d}\rightarrow
\mathbb{R}^{d\times d},\medskip\\
f:\left[  0,\infty\right)  \times\overline{\mathcal{D}}\times\mathbb{R}%
^{m}\rightarrow\mathbb{R}^{m},\quad g:\left[  0,\infty\right)  \times
\mathrm{Bd}\left(  \mathcal{D}\right)  \times\mathbb{R}^{m}\rightarrow
\mathbb{R}^{m},\medskip\\
h:\overline{\mathcal{D}}\rightarrow\mathbb{R}^{m}\text{ are continuous.}%
\end{array}
\label{Assumpt. 1}%
\end{equation}

\item There exist $\beta\in\mathbb{R}$ and $L,\gamma\in\mathbb{R}_{+}$ such
that for all $t\in\left[  0,T\right]  ,\;x,\tilde{x}\in\mathbb{R}^{d}:$%
\begin{equation}
|b\left(  t,x\right)  -b\left(  t,\tilde{x}\right)  |+|\sigma\left(
t,x\right)  -\sigma\left(  t,\tilde{x}\right)  |\leq L\left\vert x-\tilde
{x}\right\vert , \label{Assumpt. 2}%
\end{equation}
and for all $t\in\left[  0,T\right]  ,\;x\in\overline{\mathcal{D}}%
,\;u\in\mathrm{Bd}\left(  \mathcal{D}\right)  ,\;y,\tilde{y}\in\mathbb{R}%
^{m}:$%
\begin{equation}%
\begin{array}
[c]{rl}%
\left(  i\right)  & \langle y-\tilde{y},f(t,x,y)-f(t,x,\tilde{y})\rangle
\leq\beta|y-\tilde{y}|^{2},\vspace*{2mm}\\
\left(  ii\right)  & \big|f(t,x,y)\big|\leq\gamma\big (1+|y|\big ),\vspace
*{2mm}\\
\left(  iii\right)  & \langle y-\tilde{y},g(t,u,y)-g(t,u,\tilde{y})\rangle
\leq\beta|y-\tilde{y}|^{2},\vspace*{2mm}\\
\left(  iv\right)  & \big|g(t,u,y)\big|\leq\gamma\big(1+|y|\big).
\end{array}
\label{Assumpt. 3}%
\end{equation}

\item The functions%
\begin{equation}%
\begin{array}
[c]{rl}%
\left(  i\right)  & \varphi,\psi:\mathbb{R}^{m}\rightarrow(-\infty
,+\infty]\text{ are proper convex l.s.c. such that\medskip}\\
\left(  ii\right)  & \varphi\left(  y\right)  \geq\varphi\left(  0\right)
=0\text{ and }\psi\left(  y\right)  \geq\psi\left(  0\right)  =0,\;\text{for
all }y\in\mathbb{R}^{m},
\end{array}
\label{Assumpt. 4}%
\end{equation}
and there exists a positive constant $M$ such that%
\begin{equation}%
\begin{array}
[c]{rl}%
\left(  i\right)  & \big|\varphi(h(x))\big|\leq M,\quad\text{for all }{x}%
\in\overline{\mathcal{D}},\medskip\\
\left(  ii\right)  & \big|\psi(h(x))\big|\leq M,\quad\text{for all }{x}%
\in\mathrm{Bd}\left(  \mathcal{D}\right)  .
\end{array}
\label{Assumpt. 5}%
\end{equation}

\end{itemize}

For the definitions of the domain of $\varphi$ and for the subdifferential
operator $\partial\varphi$ see Annex \ref{Convex functions}.

\begin{itemize}
\item The \textit{compatibility assumptions: }for all \ $\varepsilon>0$,
$t\geq0$, $x\in\mathrm{Bd}\left(  \mathcal{D}\right)  $, $\tilde{x}%
\in\overline{\mathcal{D}}$ and $y\in\mathbb{R}^{m},$%
\begin{equation}%
\begin{array}
[c]{rl}%
\left(  i\right)  & \langle\nabla\varphi_{\varepsilon}\left(  y\right)
,\nabla\psi_{\varepsilon}\left(  y\right)  \rangle\geq0,\medskip\\
\left(  ii\right)  & \langle\nabla\varphi_{\varepsilon}\left(  y\right)
,g\left(  t,x,y\right)  \rangle\leq\langle\nabla\psi_{\varepsilon}\left(
y\right)  ,g\left(  t,x,y\right)  \rangle^{+},\medskip\\
\left(  iii\right)  & \langle\nabla\psi_{\varepsilon}\left(  y\right)
,f\left(  t,\tilde{x},y\right)  \rangle\leq\langle\nabla\varphi_{\varepsilon
}\left(  y\right)  ,f\left(  t,\tilde{x},y\right)  \rangle^{+},
\end{array}
\label{Assumpt. 6}%
\end{equation}
where $a^{+}=\max\left\{  0,a\right\}  $ and $\nabla\varphi_{\varepsilon
}\left(  y\right)  $, $\nabla\psi_{\varepsilon}\left(  y\right)  $ are the
unique solutions $U$ and $V$, respectively, of the equations%
\[
{\partial\varphi}(y-\varepsilon U)\ni U\quad\text{and}\quad{\partial\psi
}(y-\varepsilon V)\ni V.
\]

\item In addition to \cite{ma-ra/10} we impose: for all $t,\tilde{t}\in\left[
0,T\right]  $, $x\in\mathrm{Bd}\left(  \mathcal{D}\right)  $, $y,\tilde{y}%
\in\mathbb{R}^{m}$%
\begin{equation}
\big|g(t,x,y)-g(\tilde{t},\tilde{x},\tilde{y})\big|\leq\beta\left(
|t-\tilde{t}|+|x-\tilde{x}|+|y-\tilde{y}|\right)  . \label{Assumpt. 7}%
\end{equation}
The last assumption is necessary in this stronger version (with respect to
\cite{ma-ra/10}) in order to obtain the convergence (\ref{Lipschitz for g}).
\end{itemize}

It follows from \cite[Theorem 3.1]{li-sz/84} that, under the assumptions
(\ref{Assumpt. 1}--\ref{Assumpt. 2}), for each $\left(  t,x\right)  \in\left[
0,T\right]  \times\overline{\mathcal{D}}$, there exists a unique pair of
continuous $\mathcal{F}_{s}^{t}$--p.m.s.p. $(X_{s}^{t,x},A_{s}^{t,x})_{s\geq
t}$, with values in $\overline{\mathcal{D}}\times\mathbb{R}_{+}$, the solution
of the reflected stochastic differential equation (\ref{FBSDE}$-a,b$) with
$A^{t,x}$ being an increasing stochastic process.

Since $\overline{\mathcal{D}}$ is a bounded set,%
\begin{equation}
\sup_{s\geq0}|X_{s}^{t,x}|\leq M. \label{mg X}%
\end{equation}

\begin{proposition}
Under the assumptions (\ref{Assumpt. 1}--\ref{Assumpt. 2}), for all $\kappa
>0$, $p\geq1$, there exists a positive constant $C$ such that for all
$s,t,t^{\prime}\in\left[  0,T\right]  $, $x,x^{\prime}\in\overline
{\mathcal{D}}:$%
\begin{equation}%
\begin{array}
[c]{l}%
\left(  a\right)  \quad\displaystyle\mathbb{E}\underset{s\in\left[
0,T\right]  }{\sup}|X_{s}^{t,x}-X_{s}^{t^{\prime},x^{\prime}}|^{p}%
+\mathbb{E}\underset{s\in\left[  0,T\right]  }{\sup}|A_{s}^{t,x}%
-A_{s}^{t^{\prime},x^{\prime}}|^{p}\leq C\big (|x-x^{\prime}|^{p}%
+|t-t^{\prime}|^{\frac{p}{2}}\big ),\medskip\\
\left(  b\right)  \quad\mathbb{E}|A_{s}^{t,x}|^{p}\leq C\big (1+|\left(  s\vee
t\right)  -t|^{p}\big ),\medskip\\
\left(  c\right)  \quad\mathbb{E}[e^{\kappa A_{T}^{t,x}}]\leq C\text{,
and}\medskip\\
\left(  d\right)  \quad\left(  t,x\right)  \mapsto\mathbb{E}{%
%TCIMACRO{\dint _{t}^{T}}%
%BeginExpansion
{\displaystyle\int_{t}^{T}}
%EndExpansion
h_{1}(s,X_{s}^{t,x})ds+\mathbb{E}%
%TCIMACRO{\dint _{t}^{T}}%
%BeginExpansion
{\displaystyle\int_{t}^{T}}
%EndExpansion
h_{2}(s,X_{s}^{t,x})dA_{s}^{t,x}}:\left[  0,T\right]  \times\overline
{D}\rightarrow\mathbb{R}\text{ is continuous,}\smallskip\\
\quad\quad\quad\quad\quad\quad\quad\quad\quad\text{for every continuous
functions }h_{1},h_{2}:\left[  0,T\right]  \times\overline{D}\rightarrow
\mathbb{R}\text{.}%
\end{array}
\label{cont X}%
\end{equation}

\end{proposition}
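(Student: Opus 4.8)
\medskip
\noindent\textbf{Plan of proof.}
The plan is to establish (b) and (c) first from an explicit Itô expansion of $\ell(X^{t,x})$, then to deduce the difference estimate (a) by a Gronwall-type argument for reflected diffusions, and finally to obtain the continuity (d) by combining (a) with a weak-convergence-of-measures argument. Throughout we use that $\overline{\mathcal D}$ is compact and $\ell\in C_b^3(\mathbb R^d)$, so that $b,\sigma,\nabla\ell,D^2\ell$ are bounded along the trajectories of $X^{t,x}$, with bounds independent of $(t,x)$.

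\textit{Parts }(b)\textit{ and }(c). Applying Itô's formula to $\ell(X_s^{t,x})$, using $dX_s^{t,x}=b\,ds+\sigma\,dW_s-\nabla\ell(X_s^{t,x})dA_s^{t,x}$ and the fact that, by (\ref{FBSDE}--$b$), the measure $dA^{t,x}$ is carried by $\mathrm{Bd}(\mathcal D)$ where $|\nabla\ell|=1$, one gets for $s\geq t$
\begin{equation*}
A_s^{t,x}=\ell(x)-\ell(X_s^{t,x})+\int_t^{s}\langle\nabla\ell(X_r^{t,x}),b(r,X_r^{t,x})\rangle\,dr+\tfrac12\int_t^{s}\mathrm{Tr}\big[\sigma\sigma^{\ast}(r,X_r^{t,x})D^2\ell(X_r^{t,x})\big]\,dr+N_s^{t,x},
\end{equation*}
with $N_s^{t,x}:=\int_t^{s\vee t}\langle\nabla\ell(X_r^{t,x}),\sigma(r,X_r^{t,x})dW_r\rangle$. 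All the finite-variation terms above are bounded by $C\big(1+|(s\vee t)-t|\big)$ and $\langle N^{t,x}\rangle_s\leq C\,|(s\vee t)-t|\leq CT$; hence (b) follows from the Burkholder--Davis--Gundy (BDG) inequality. For (c), write $e^{\kappa A_T^{t,x}}\leq e^{\kappa C_0}e^{\kappa N_T^{t,x}}$ with $C_0$ a deterministic constant, independent of $(t,x)$, bounding the finite-variation part, and $e^{\kappa N_T^{t,x}}=\big(e^{\kappa N_T^{t,x}-\frac{\kappa^2}{2}\langle N^{t,x}\rangle_T}\big)e^{\frac{\kappa^2}{2}\langle N^{t,x}\rangle_T}\leq e^{\frac{\kappa^2}{2}CT}\,\mathcal E_T$, where $\mathcal E$ is the exponential supermartingale with $\mathbb E\,\mathcal E_T\leq1$; thus $\mathbb E[e^{\kappa A_T^{t,x}}]\leq e^{\kappa C_0+\frac{\kappa^2}{2}CT}=:C$.

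\textit{Part }(a). Assume $t\leq t^{\prime}$. On $[t,t^{\prime}]$ one has $X_s^{t^{\prime},x^{\prime}}=x^{\prime}$, $A_s^{t^{\prime},x^{\prime}}=0$, and by the same Itô--Gronwall argument as below (comparing $X^{t,x}$ on $[t,t^{\prime}]$ with the constant path $x$, and using the representation of $A^{t,x}$ just obtained) $\mathbb E\sup_{t\leq s\leq t^{\prime}}|X_s^{t,x}-x|^p+\mathbb E\sup_{t\leq s\leq t^{\prime}}|A_s^{t,x}|^p\leq C\,|t^{\prime}-t|^{p/2}$, so that $\mathbb E|X_{t^{\prime}}^{t,x}-x^{\prime}|^p\leq C\big(|x-x^{\prime}|^p+|t-t^{\prime}|^{p/2}\big)$. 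For $s\geq t^{\prime}$ set $\Delta X_s:=X_s^{t,x}-X_s^{t^{\prime},x^{\prime}}$ and apply Itô's formula to $|\Delta X_s|^2$. The reflection term is handled by the boundary inequality $\langle z-y,\nabla\ell(y)\rangle\leq C|z-y|^2$, valid for $z\in\overline{\mathcal D}$ and $y\in\mathrm{Bd}(\mathcal D)$ (Taylor expansion of $\ell$ at $y$, using $\ell(z)\leq0=\ell(y)$ and $D^2\ell$ bounded), which gives
\begin{equation*}
-\int_{t^{\prime}}^{s}\big\langle\Delta X_r,\ \nabla\ell(X_r^{t,x})\,dA_r^{t,x}-\nabla\ell(X_r^{t^{\prime},x^{\prime}})\,dA_r^{t^{\prime},x^{\prime}}\big\rangle\leq C\int_{t^{\prime}}^{s}|\Delta X_r|^2\big(dA_r^{t,x}+dA_r^{t^{\prime},x^{\prime}}\big);
\end{equation*}
combined with (\ref{Assumpt. 2}) this yields $d|\Delta X_s|^2\leq C|\Delta X_s|^2\,d\Lambda_s+d\mathcal M_s$, with $\Lambda_s:=s+A_s^{t,x}+A_s^{t^{\prime},x^{\prime}}$ and $\mathcal M$ a local martingale. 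A standard Gronwall argument (multiply by $e^{-C\Lambda_s}$, apply Itô, take suprema and use BDG, then remove the factor $e^{-C\Lambda_s}$ by Hölder's inequality thanks to the exponential integrability (c)) gives $\mathbb E\sup_{s\geq t^{\prime}}|\Delta X_s|^p\leq C\,\mathbb E|\Delta X_{t^{\prime}}|^p\leq C\big(|x-x^{\prime}|^p+|t-t^{\prime}|^{p/2}\big)$. Finally, comparing the representations of $A^{t,x}$ and $A^{t^{\prime},x^{\prime}}$ on $[t^{\prime},s]$ (Lipschitz property of $\ell$ and $\nabla\ell$, the bound on $\Delta X$ just proved, BDG, and the increments on $[t,t^{\prime}]$) gives the same bound for $\mathbb E\sup_{s\in[0,T]}|A_s^{t,x}-A_s^{t^{\prime},x^{\prime}}|^p$, which is (a).

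\textit{Part }(d)\textit{ and the main obstacle.} Let $(t_n,x_n)\to(t,x)$. By (a), $X^{t_n,x_n}\to X^{t,x}$ and $A^{t_n,x_n}\to A^{t,x}$ in $L^p\big(\Omega;C([0,T];\mathbb R^d)\big)$, hence, passing to a subsequence, $\mathbb P$--a.s. uniformly on $[0,T]$ (with the convention $X_s^{t,x}=x$, $A_s^{t,x}=0$ for $s\leq t$). Since $h_1,h_2$ are bounded and uniformly continuous on the compact set $[0,T]\times\overline{\mathcal D}$, the maps $s\mapsto h_i(s,X_s^{t_n,x_n})$ converge a.s. uniformly on $[0,T]$ to $s\mapsto h_i(s,X_s^{t,x})$, which already gives $\int_{t_n}^{T}h_1(s,X_s^{t_n,x_n})\,ds\to\int_t^{T}h_1(s,X_s^{t,x})\,ds$ a.s. For the Stieltjes term, note that $\int_{t_n}^{T}h_2(s,X_s^{t_n,x_n})\,dA_s^{t_n,x_n}=\int_0^{T}h_2(s,X_s^{t_n,x_n})\,dA_s^{t_n,x_n}$ since $dA^{t_n,x_n}$ puts no mass on $[0,t_n]$; because $A^{t_n,x_n}\to A^{t,x}$ uniformly and all these functions are non-decreasing with uniformly bounded total mass, the measures $dA^{t_n,x_n}$ converge weakly to $dA^{t,x}$, so (together with the uniform convergence of the integrands) $\int_0^{T}h_2(s,X_s^{t_n,x_n})\,dA_s^{t_n,x_n}\to\int_0^{T}h_2(s,X_s^{t,x})\,dA_s^{t,x}$ a.s. Both integrals are dominated by $\|h_1\|_{\infty}T$ and $\|h_2\|_{\infty}A_T^{t_n,x_n}$ respectively, and $\{A_T^{t_n,x_n}\}_n$ is bounded in $L^2$ by (b), hence uniformly integrable; therefore the a.s. convergences hold in $L^1$ and pass to the expectations. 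Since every subsequence has a further subsequence along which the functional converges to the same limit, the whole sequence converges, proving (d). The technical core is Part (a): the reflection term requires the boundary inequality, and the Gronwall factor generated by the random (possibly large) process $A^{t,x}$ is absorbed precisely because of the exponential integrability (c). The passage to the limit in $\int h_2\,dA$ in Part (d) is the other delicate point, but is made straightforward here by the strong ($L^p$, uniform in time) convergence of $A^{t_n,x_n}$ supplied by Part (a).
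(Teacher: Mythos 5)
Your proposal is correct, and its ingredients for (b)--(d) (the It\^{o} representation $A_{s}^{t,x}=\ell(x)-\ell(X_{s}^{t,x})+\int_{t}^{s\vee t}\mathcal{L}_{r}\ell(X_{r}^{t,x})dr+N_{s}^{t,x}$, the exponential supermartingale bound for (c), and the boundary inequality $\langle z-y,\nabla\ell(y)\rangle\leq C|z-y|^{2}$ for $z\in\overline{\mathcal{D}}$, $y\in\mathrm{Bd}(\mathcal{D})$) are exactly the ones the paper invokes, the paper itself only sketching them with references to \cite{pa-ra/12}. Where you genuinely diverge is in how the reflection term is absorbed in part (a): the paper applies It\^{o}'s formula to $\exp\big[\delta\big(\ell(X_{r}^{t,x})+\ell(X_{r}^{t^{\prime},x^{\prime}})\big)\big]\,|X_{r}^{t,x}-X_{r}^{t^{\prime},x^{\prime}}|^{2}$, so that the differentials $d\ell(X^{t,x})$, $d\ell(X^{t^{\prime},x^{\prime}})$ of the \emph{bounded, deterministic-size} weight produce terms $-\delta|X-X^{\prime}|^{2}(dA^{t,x}+dA^{t^{\prime},x^{\prime}})$ which cancel the bad reflection contribution on the spot; in particular (a) never uses the exponential moments (c) and no weight has to be removed afterwards. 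You instead run a Gronwall argument with the random weight $e^{-C(s+A_{s}^{t,x}+A_{s}^{t^{\prime},x^{\prime}})}$ and then discard it by H\"{o}lder using (c). That route is legitimate (it is the classical Pardoux--Zhang-style argument), but it makes (a) depend on (c) and costs you exponents in the H\"{o}lder step: strictly, you obtain a bound of the form $C\big(\mathbb{E}|\Delta X_{t^{\prime}}|^{2p}\big)^{1/2}$ rather than $C\,\mathbb{E}|\Delta X_{t^{\prime}}|^{p}$, which is harmless here because the initial-value estimate holds for every $p$, but should be stated as such. Your proof of (d) via a.s.\ uniform convergence along a subsequence, weak convergence of the Stieltjes measures $dA^{t_{n},x_{n}}$ and uniform integrability from (b), together with the subsequence principle, is a clean direct argument replacing the paper's citation of \cite[Corollary 4.56]{pa-ra/12}.
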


\begin{proof}
The proof follows the techniques from \cite{pa-ra/12}, Proposition 4.55
associated with Proposition 3.22, and Corollary 4.56. Roughly speaking, the
main idea is to use It\^{o}'s formula in order to compute $d\Big[\exp
\big[\delta\big(\ell(X_{r}^{t,x})+\ell(X_{r}^{t^{\prime},x^{\prime}%
})\big)\big]\big(X_{r}^{t,x}-X_{r}^{t^{\prime},x^{\prime}}\big)\Big]$, where
$\delta$ is a strictly positive constant (which exists due to \cite[Theorem
4.47]{pa-ra/12}) such that%
\[%
\begin{array}
[c]{l}%
\displaystyle-\langle X_{s}^{t,x}-X_{s}^{t^{\prime},x^{\prime}},\nabla
\ell(X_{r}^{t,x})dA_{r}^{t,x}-\nabla\ell(X_{r}^{t^{\prime},x^{\prime}}%
)dA_{r}^{t^{\prime},x^{\prime}}\rangle\medskip\\
\displaystyle\leq\delta|X_{s}^{t,x}-X_{s}^{t^{\prime},x^{\prime}}%
|^{2}\big(dA_{r}^{t,x}+dA_{r}^{t^{\prime},x^{\prime}}\big),\;\text{a.s.}%
\end{array}
\]
Moreover, we use the form of the process $A^{t,x}$ due to It\^{o}'s formula:%
\[
A_{s}^{t,x}=%
%TCIMACRO{\dint \nolimits_{t}^{s\vee t}}%
%BeginExpansion
{\displaystyle\int\nolimits_{t}^{s\vee t}}
%EndExpansion
\mathcal{L}_{r}\ell(X_{r}^{t,x})dr+%
%TCIMACRO{\dint \nolimits_{t}^{s\vee t}}%
%BeginExpansion
{\displaystyle\int\nolimits_{t}^{s\vee t}}
%EndExpansion
\left\langle \nabla\ell(X_{r}^{t,x}),\sigma(r,X_{r}^{t,x})dW_{r}\right\rangle
-\left[  \ell(X_{s}^{t,x})-\ell\left(  x\right)  \right]  .
\]
\hfill
\end{proof}

Under the assumptions (\ref{Assumpt. 1})--(\ref{Assumpt. 6}), it follows from
\cite[Theorem 9]{ma-ra/10} (with $k=1$ and $\tau$ replaced by $T$) that for
each $\left(  t,x\right)  \in\left[  0,T\right]  \times\overline{\mathcal{D}}$
there exists a unique 4--tuple $(Y^{t,x},Z^{t,x},U^{t,x},V^{t,x})\;$of
p.m.s.p. such that $Y^{t,x}$ has continuous trajectories, and for any $\mu
\geq0$ there exists a positive constant $C$ independent of $\left(
t,x\right)  $ such that
\[%
\begin{array}
[c]{l}%
\displaystyle\mathbb{E}\sup_{s\in\left[  0,T\right]  }e^{\mu A_{s}^{t,x}%
}|Y_{s}^{t,x}|^{2}+\mathbb{E}\int_{0}^{T}e^{\mu A_{s}^{t,x}}|Y_{s}^{t,x}%
|^{2}(ds+dA_{s}^{t,x})\leq C,\vspace*{2mm}\\
\displaystyle\mathbb{E}\int_{0}^{T}e^{\mu A_{s}^{t,x}}|Z_{s}^{t,x}|^{2}ds\leq
C,\medskip\\
\displaystyle\mathbb{E}\int_{0}^{T}e^{\mu A_{s}^{t,x}}|U_{s}^{t,x}%
|^{2}ds+\mathbb{E}\int_{0}^{T}e^{\mu A_{s}^{t,x}}|V_{s}^{t,x}|^{2}dA_{s}%
^{t,x}\leq C,
\end{array}
\]
and BSDE (\ref{FBSDE}$-c,d,e$) is satisfied.

We remark in addition that the functions $f$, $g\;$depend on $\omega$ only
through the process $X^{t,x}$.

\section{Main result}

We define%
\begin{equation}
u(t,x)=Y_{t}^{t,x},\quad(t,x)\in\lbrack0,T]\times\overline{\mathcal{D}},
\label{def u}%
\end{equation}
which is a determinist quantity since $Y_{t}^{t,x}$ is $\mathcal{F}_{t}%
^{t}=\sigma\left(  \mathcal{N}\right)  $--measurable.

From the Markov property we have%
\begin{equation}
u(s,X_{s}^{t,x})=Y_{s}^{t,x}. \label{prop u}%
\end{equation}

We highlight that the continuity of application $\left(  t,x\right)  \mapsto
u\left(  t,x\right)  $ does not follow anymore directly from inequality
$\left(  40\right)  $ from \cite[Proposition 13]{ma-ra/10} as it was declared
in \cite[Corollary 14--$\left(  c\right)  $]{ma-ra/10} (see the last section
for the correct statement of Proposition 13). Our article involves new
arguments. Since the function $u$ is defined through $Y^{t,x}$, the problem of
continuity of $u$ is a consequence of the continuity of the stochastic process
$(Y_{s}^{t,x})_{s\in\left[  0,T\right]  }$ with respect to the initial data
$\left(  t,x\right)  $. We will give first a generalization of
\cite[Proposition 15]{ba-ma-za/13} to our backward stochastic equation; more
precisely, we will show that $\left(  Y^{n}\right)  _{n\in\mathbb{N}}:=\left(
Y^{t_{n},x_{n}}\right)  _{n\in\mathbb{N}}$ is tight in a suitable topological
space and we will use the techniques presented in \cite[Section 4]%
{ba-ma-za/13} and \cite[Section 3]{bo-ca/04}. This approach forces us to
restrict to the case where coefficient $f$ does not depend on $Z$ (for a more
detailed explanation see the comments from \cite[Section 6, page 535]{pa/99}).

Let us consider the Skorohod space $\mathbb{D=D}\left(  \left[  0,T\right]
,\mathbb{R}^{m}\right)  $ of c\`{a}dl\`{a}g functions $y:\left[  0,T\right]
\rightarrow\mathbb{R}^{m}$ (i.e. right continuous and with left--hand limits)
endowed with \textrm{S}--topology (introduced by Jakubowski in \cite{ja/97}).
The spaces $\mathcal{C}\left(  \left[  0,T\right]  ,\mathbb{R}^{d}\right)  $
of continuous functions are equipped with the topology provided by the
supremum norm $\left\Vert u\right\Vert _{T}:=\sup_{s\in\left[  0,T\right]
}\left\vert u\left(  s\right)  \right\vert .$

For the convenience of the reader, we summarize in the Annexes the definitions
and remarks concerning \textrm{S}--topology (see also \cite{ja/97}), as well
as Helly--Bray type results, corresponding to the \textrm{S}--convergence
case.$\smallskip$

The main result of this article is the following:

\begin{theorem}
\label{cont of Y and u}Under assumptions (\ref{Assumpt. 1})--(\ref{Assumpt. 7}%
), function%
\[
\left(  t,x\right)  \mapsto u\left(  t,x\right)  =Y_{t}^{t,x}:[0,T]\times
\overline{\mathcal{D}}\rightarrow\mathbb{R}^{m}%
\]
is continuous.
\end{theorem}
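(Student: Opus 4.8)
The strategy is to fix $(t,x)\in[0,T]\times\overline{\mathcal D}$, take an arbitrary sequence $(t_n,x_n)\to(t,x)$, and show that $Y^{t_n,x_n}_{t_n}\to Y^{t,x}_t$. Since $u(t,x)=Y^{t,x}_t$ is deterministic, it suffices to establish the convergence along a subsequence, so we are free to pass to subsequences repeatedly. First I would recall the a priori estimates from the existence theorem (the displayed bounds after \cite[Theorem 9]{ma-ra/10}) and from Proposition on $(X,A)$, which give uniform-in-$n$ bounds on $\mathbb E\sup_s|Y^n_s|^2$, on $\mathbb E\int_0^T|Z^n_s|^2ds$, on $\mathbb E\int_0^T(|U^n_s|^2ds+|V^n_s|^2dA^n_s)$, and on $\mathbb E[e^{\kappa A^n_T}]$; together with (\ref{cont X}-a) we get $X^n\to X^{t,x}$ and $A^n\to A^{t,x}$ in $\mathcal C([0,T],\mathbb R^d)$ and $\mathcal C([0,T],\mathbb R_+)$ in probability (hence a.s. along a subsequence, after a further extraction).

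Next I would prove tightness of $(Y^n)_n$ in $\mathbb D([0,T],\mathbb R^m)$ with the \textrm{S}-topology. The natural tool is Jakubowski's criterion: it is enough to bound $\sup_n\mathbb E\,\|Y^n\|_T$ (done) and $\sup_n\mathbb E\,N^a_{\pi}(Y^n)$, the conditional variation / number-of-up-down-crossings functional. Writing $Y^n$ via (\ref{FBSDE}-c) as the sum of a finite-variation part $R^n_s:=\int_0^s(f(r,X^n_r,Y^n_r)-U^n_r)dr+\int_0^s(g(r,X^n_r,Y^n_r)-V^n_r)dA^n_r$ and a martingale part $-M^n$, the variation of $R^n$ is controlled in $L^1$ by the a priori estimates (using $|f|,|g|\le\gamma(1+|y|)$, the $L^2$ bounds on $U^n,V^n$, and Cauchy--Schwarz with $\mathbb E[e^{\kappa A^n_T}]$ bounded), and the martingale part is handled by $\mathbb E|M^n_T|\le(\mathbb E\int_0^T|Z^n_r|^2dr)^{1/2}$. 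This yields the $\textrm S$-tightness. Then, along a subsequence, by the Skorohod-type representation available for the \textrm S-topology (Jakubowski), I would move to a new probability space on which $X^n\to\tilde X$, $A^n\to\tilde A$ uniformly, $M^n\to\tilde M$ and $Y^n\to\tilde Y$ in the \textrm S-topology, all almost surely, with the same laws; by a martingale-convergence / uniform-integrability argument $\tilde M$ is a martingale, $\tilde Z^n\rightharpoonup\tilde Z$ weakly in $L^2(dt)$, and similarly $U^n\rightharpoonup\tilde U$ in $L^2(dt)$, $V^n\rightharpoonup\tilde V$ in $L^2(d\tilde A)$ (here the uniform integrability supplied by the $e^{\kappa A^n_T}$ bound is what lets these limits survive).

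The crucial passage to the limit is then to show the limiting sextuple $(\tilde X,\tilde A,\tilde Y,\tilde Z,\tilde U,\tilde V)$ solves the \emph{same} system (\ref{FBSDE}) with initial data $(t,x)$. Convergence of the Lebesgue integrals $\int g(r,X^n_r,Y^n_r)dr$-type terms is standard; the delicate one is $\int_0^s g(r,X^n_r,Y^n_r)dA^n_r\to\int_0^s g(r,\tilde X_r,\tilde Y_r)d\tilde A_r$, and this is exactly where the \textrm S-topology and the extra Lipschitz hypothesis (\ref{Assumpt. 7}) enter: \textrm S-convergence of $Y^n$ plus uniform convergence of the continuous increasing $A^n$ gives, via the Helly--Bray-type lemma announced in the Annexes, convergence of $\int_0^s y^n dA^n$ for \textrm S-convergent $y^n$, and (\ref{Assumpt. 7}) upgrades $g(r,X^n_r,Y^n_r)$ to a sequence converging in the right sense; this is the step labeled (\ref{Lipschitz for g}) in the text. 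For the subdifferential inclusions (\ref{FBSDE}-d,e), I would pass to the limit in the integrated (weak) form: lower semicontinuity of $\varphi$, the weak convergence of $U^n$, a.s. convergence of $Y^n$ (for a.e. $r$, off the jump set, which is Lebesgue-null by \textrm S-convergence), and Fatou give $\int\langle v-\tilde Y_r,\tilde U_r\rangle dr+\int\varphi(\tilde Y_r)dr\le\int\varphi(v)dr$, and symmetrically for $\psi$ against $d\tilde A$. By uniqueness in \cite[Theorem 9]{ma-ra/10}, $(\tilde Y,\tilde Z,\tilde U,\tilde V)$ has the law of $(Y^{t,x},Z^{t,x},U^{t,x},V^{t,x})$; in particular $\tilde Y$ is continuous and $\tilde Y_t=Y^{t,x}_t$ in law, hence (being deterministic) $Y^n_{t_n}=\tilde Y^n_{t_n}\to\tilde Y_t=u(t,x)$. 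Since the subsequence was arbitrary, $u(t_n,x_n)\to u(t,x)$, proving continuity. \textbf{The main obstacle} I anticipate is the rigorous treatment of the $dA^n$-integral limit: \textrm S-convergence of $Y^n$ is strictly weaker than uniform convergence, so one cannot naively plug limits into $\int g\,dA^n$, and the argument must combine the Helly--Bray lemma for \textrm S-convergence, the uniform convergence of the (continuous!) reflection processes $A^n$, and the Lipschitz bound (\ref{Assumpt. 7}) — together with care that the exceptional (jump) times of the \textrm S-limit do not carry $d\tilde A$-mass, which is where one uses that $\tilde A$ is continuous.
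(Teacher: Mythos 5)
Your first half — reduction to subsequences, the uniform a priori bounds, \textrm{S}--tightness of $(Y^n,M^n,K^{1,n},K^{2,n})$ via the conditional--variation criterion, Jakubowski's Skorohod-type representation on a new probability space, uniform convergence of $(X^n,A^n)$, and the Helly--Bray argument combined with (\ref{Assumpt. 7}) for the $dA^n_r$--integrals as in (\ref{Lipschitz for g}) — is exactly the route of the paper. The genuine gap is in your identification step. You propose to extract weak $L^2$ limits $\tilde Z,\tilde U,\tilde V$, show that the limit sextuple solves the \emph{same} system (\ref{FBSDE}) with data $(t,x)$, and then invoke uniqueness from \cite[Theorem 9]{ma-ra/10}. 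Two things break there. First, \textrm{S}--convergence of $M^n$ plus weak convergence of $Z^n$ does not let you identify the limit $\bar M$ as a stochastic integral $\int \bar Z\,d\bar W$ adapted to the Brownian filtration: the limit is a priori only a c\`adl\`ag martingale with respect to the enlarged filtration $\mathcal{\bar F}_s=\mathcal F_s^{\bar W,\bar Y,\bar M,\bar K^1,\bar K^2}$ (no martingale representation is available there), so the uniqueness statement of \cite[Theorem 9]{ma-ra/10}, which concerns solutions in the Brownian set-up with $M=\int Z\,dW$, does not apply to the limit object. This is precisely why the paper never identifies $\bar M$ and instead remarks that one must \emph{not} work with the filtration generated by the Brownian motion. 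Second, passing to the limit in the two subdifferential inclusions separately, as you sketch with Fatou and weak convergence of $U^n$ (and $V^n$ against the moving measures $dA^n$), is not achieved in the paper and is problematic: since $\bar Y,\bar K^1,\bar K^2,\bar M$ are only c\`adl\`ag \textrm{S}--limits, what survives is only a \emph{combined} inequality for $\bar K^1+\bar K^2$ with quadratic-variation correction terms, namely (\ref{subineq}) of Lemma \ref{technical lemma 3}; the clean separate inclusions $d\bar K^1\in\partial\varphi(\bar Y_s)ds$, $d\bar K^2\in\partial\psi(\bar Y_s)d\bar A_s$ for the limit are not obtained, and your weak--strong pairing $\int\langle v-\tilde Y_r,\tilde V_r\rangle dA^n_r$ with an $n$--dependent measure is left unjustified.

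The paper closes this gap differently: it constructs on the new space $(\bar\Omega,\mathcal{\bar F},\mathbb{\bar P})$ a genuine solution $(\bar Y^{t,x},\bar Z^{t,x},\bar U^{t,x},\bar V^{t,x})$ of (\ref{FBSDE}$-c,d,e$) driven by $\bar W$ and $(\bar X^{t,x},\bar A^{t,x})$, as in (\ref{generalized BSDE 6}), and then compares it with the limit $(\bar Y,\bar M,\bar K^1,\bar K^2)$ through It\^o's formula for c\`adl\`ag semimartingales, the monotonicity of $f,g$, the c\`adl\`ag subdifferential inequality (\ref{subineq}) and the monotonicity inequality (\ref{subdiff ineq}) of Lemma \ref{monoton}, followed by a Gronwall argument after the time change $\bar Q_s=s+\bar A^{t,x}_{s\wedge T}$ as in (\ref{Gronwall}); this yields $\bar Y=\bar Y^{t,x}$ directly, without ever representing $\bar M$ as an It\^o integral or establishing separate inclusions for the limit. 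To repair your proposal you would either have to supply those missing identifications (which is essentially as hard as the problem itself) or adopt this comparison-with-a-fresh-solution argument, which is the actual content of the second half of the paper's proof.
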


\begin{remark}
Using the continuity of $u$ it was proved in \cite{ma-ra/10} that in the case
$m=1$ this function is the unique viscosity solution of the parabolic
variational inequality with mixed nonlinear multivalued Neumann--Dirichlet
boundary condition (\ref{multivalued PDE}).
\end{remark}

\begin{proof}
[Proof of Theorem \ref{cont of Y and u}]Let $\left(  t_{n},x_{n}\right)
\rightarrow\left(  t,x\right)  $, as $n\rightarrow\mathbb{\infty}$. To prove
that $u\left(  t_{n},x_{n}\right)  \rightarrow u\left(  t,x\right)  $ is
equivalent with proving that any subsequence has a further subsequence which
converges to $u\left(  t,x\right)  $. Let $\left(  t_{n_{k}},x_{n_{k}}\right)
$ be an arbitrary subsequence still denoted in the sequel by $\left(
t_{n},x_{n}\right)  .$\medskip

Using the definitions%
\[
f_{n}\left(  r,x,y\right)  :={\mathbb{1}_{[t_{n},T]}}\left(  r\right)
f\left(  r,x,y\right)  \quad\text{and}\quad g_{n}\left(  r,x,y\right)
:=g\left(  r\vee t_{n},x,y\right)
\]
it is clear that the processes%
\[%
\begin{array}
[c]{l}%
X^{n}:=X^{t_{n},x_{n}}\,,~A^{n}:=A^{t_{n},x_{n}}\quad\text{and}\medskip\\
Y^{n}:=Y^{t_{n},x_{n}}\,,~Z^{n}:=Z^{t_{n},x_{n}}\,,~M^{n}:=M^{t_{n},x_{n}%
}\,,~U^{n}:=U^{t_{n},x_{n}}\,,~V^{n}:=V^{t_{n},x_{n}}%
\end{array}
\]
satisfy equation (\ref{FBSDE}$-a,b$)%
\begin{equation}
\left\{
\begin{array}
[c]{l}%
\displaystyle X_{s}^{n}=x_{n}+\int_{t_{n}}^{s\vee t_{n}}b(r,X_{r}^{n}%
)dr+\int_{t_{n}}^{s\vee t_{n}}\sigma(r,X_{r}^{n})dW_{r}-\int_{t_{n}}^{s\vee
t_{n}}\nabla\ell(X_{r}^{n})dA_{r}^{n},\;\medskip\\
s\longmapsto A_{s}^{n}\text{\ \ is increasing,}\medskip\\
\displaystyle A_{s}^{n}=\int_{t_{n}}^{s\vee t_{n}}\mathbf{1}_{\{X_{r}^{n}%
\in\mathrm{Bd}\left(  \mathcal{D}\right)  \}}dA_{r}^{n}\,,\text{ for all }%
s\in\left[  0,T\right]
\end{array}
\right.  \label{def (X,A) 2}%
\end{equation}
and the backward equation%
\begin{equation}%
\begin{array}
[c]{r}%
Y_{s}^{n}+\displaystyle\int_{s}^{T}U_{r}^{n}dr+\int_{s}^{T}V_{r}^{n}%
\,dA_{r}^{n}{=h(X}_{T}^{n}){+\int_{s}^{T}}f_{n}{(r,{X_{r}^{n},Y{_{r}^{n}})}%
dr}\medskip\\
\displaystyle{+\int_{s}^{T}}g_{n}{(r,{X_{r}^{n},Y{_{r}^{n}})dA_{r}^{n}}}-%
%TCIMACRO{\dint _{s}^{T}}%
%BeginExpansion
{\displaystyle\int_{s}^{T}}
%EndExpansion
{\left\langle Z_{r}^{n},{dW_{r}}\right\rangle }\,,\;s\in\lbrack0,T]
\end{array}
\label{generalized BSDE 2}%
\end{equation}
such that (\ref{FBSDE}$-d,e$) is satisfied (we recall that ${{X_{r}^{n}}}%
={{x}}_{n}$, ${{A_{r}^{n}}}=U_{r}^{n}=V_{r}^{n}\,=0$ and $Z_{r}^{n}=0$ if
$r\in\left[  0,t_{n}\right]  ;$ consequently $Y_{s}^{n}=Y_{t_{n}}^{n}$ if
$s\in\left[  0,t_{n}\right]  $).

The first part of the proof (the arguments for $\mathrm{S}$--tightness) is
adapted from \cite[Proposition 15]{ba-ma-za/13} and \cite[Theorem
3.1]{bo-ca/04} to the case of backward stochastic variational inequalities.

Since we have the conclusion of the Existence Theorem 9 from \cite{ma-ra/10},
we easily see that:%
\[
\sup_{n\in\mathbb{N}}{\Big[}\mathbb{E}{\big(}\sup_{s\in\left[  0,T\right]
}|Y_{s}^{n}|^{2}{\big){+}\mathbb{E}{\int_{0}^{T}}|Y_{r}^{n}|^{2}{dA_{r}^{n}}%
+}\mathbb{E}{\int_{0}^{T}}|Z_{r}^{n}|^{2}dr+{\mathbb{E}{\int_{0}^{T}}%
|U_{r}^{n}|^{2}{dr+}\mathbb{E}{\int_{0}^{T}}|V_{r}^{n}|^{2}{dA_{r}^{n}}%
\Big]}<\mathbb{\infty}.
\]
In fact, it can be proved as in \cite[Theorem 9]{ma-ra/10} (also see
Proposition 5.46 from \cite{pa-ra/12}) that%
\begin{equation}
\sup_{n\in\mathbb{N}}\mathbb{E}{\big(}\sup_{s\in\left[  0,T\right]  }%
|Y_{s}^{n}|^{p}{\big)}<\infty,\;\text{for all }p>0. \label{mg Y 2}%
\end{equation}
The $\mathrm{S}$--tightness will be obtained by using the sufficient condition
given, e.g., in \cite[Appendix A]{le/02} (also see Theorem
\ref{Theorem 2_Annexes} in the Annexes).

Let%
\[
{M{_{s}^{n}}}:={\int_{t_{n}}^{s\vee t_{n}}Z_{r}^{n}{dW_{r}}}={\int_{t_{n}%
}^{s\vee t_{n}}}\hat{Z}_{r}^{t_{n},x_{n}}d{M_{r}^{X^{n}}},\quad{M_{s}^{X^{n}}%
}=\int_{t_{n}}^{s\vee t_{n}}\sigma(r,X_{r}^{n})dW_{r}~.
\]
and%
\[
K_{s}^{1,n}:=\int_{0}^{s}U_{r}^{n}dr\,,\quad K_{s}^{2,n}:=\int_{0}^{s}%
V_{r}^{n}dA_{r}^{n}\,,\text{ for all }s\in\left[  0,T\right]  .
\]
We clearly have ${M{_{s}^{n}}}=K_{s}^{1,n}=K_{s}^{2,n}=0$ for $s\in\left[
0,t\right]  $ and
\[
\sup_{n\in\mathbb{N}}\mathbb{E}{\Big[\left\updownarrow K^{1,n}%
\right\updownarrow _{T}^{2}+\left\updownarrow K^{2,n}\right\updownarrow
_{T}^{2}+\sup_{s\in\left[  0,T\right]  }}\left\vert {M{_{s}^{n}}}\right\vert
^{2}{\Big]}<\mathbb{\infty},
\]
where ${\left\updownarrow \cdot\right\updownarrow _{T}}$ denotes the total
variation on $\left[  0,T\right]  $ (see Section
\ref{Bounded variation functions} in the Annexes) and consequently
\begin{equation}
\sup_{n\in\mathbb{N}^{\ast}}\left[  \mathbb{E}\sup_{s\in\lbrack0,T]}\left\vert
Y_{s}^{n}\right\vert +\mathbb{E}\sup_{s\in\lbrack0,T]}|K_{s}^{1,n}%
|+\mathbb{E}\sup_{s\in\lbrack0,T]}|K_{s}^{2,n}|+\mathbb{E}{\sup_{s\in\left[
0,T\right]  }}\left\vert {M{_{s}^{n}}}\right\vert \right]  =C_{1}<\infty.
\label{bound_sup}%
\end{equation}
Moreover we easily deduce using the assumptions (\ref{Assumpt. 3}--$ii,iv$)%
\[
\sup_{n\in\mathbb{N}^{\ast}}\mathbb{E}\left[  {\int_{0}^{T}}\left\vert
f_{n}{(r,{X_{r}^{n},Y{_{r}^{n}})}}\right\vert {dr+\int_{0}^{T}}\left\vert
g_{n}{(r,{X_{r}^{n},Y{_{r}^{n}})}}\right\vert {{dA_{r}^{n}}}\right]
=C_{2}<\infty.
\]

Equation (\ref{generalized BSDE 2}) becomes%
\begin{equation}%
\begin{array}
[c]{r}%
\displaystyle Y_{s}^{n}+(K_{T}^{1,n}-K_{s}^{1,n})+(K_{T}^{2,n}-K_{s}%
^{2,n}){=h(X}_{T}^{n}){+\int_{s}^{T}}f_{n}{(r,{X_{r}^{n},Y{_{r}^{n}})}%
dr}\medskip\\
\displaystyle{+}\int_{s}^{T}g_{n}{(r,{X_{r}^{n},Y{_{r}^{n}})dA_{r}^{n}}%
}-(M_{T}^{n}-M_{s}^{n})\,,\;s\in\lbrack0,T]
\end{array}
\label{generalized BSDE 3}%
\end{equation}
where, as measure on $\left[  t_{n},T\right]  ,$ $\mathbb{P}$--a.s.%
\begin{equation}
dK_{s}^{1,n}=U_{s}^{n}ds\in\partial\varphi(Y_{s}^{n})ds\quad\text{and}\quad
dK_{s}^{2,n}=V_{s}^{n}dA_{s}^{n}\in\partial\psi(Y_{s}^{n})dA_{s}^{n}.
\label{elements of subdiff 5}%
\end{equation}
We recall now the definition of the conditional variation (see also
(\ref{def cond var}) in the Annexes):%
\[
\mathrm{CV}_{T}(L):=\sup_{\pi}{\sum_{i=0}^{N-1}{\mathbb{E}}}%
\Big[\big|\mathbb{E}^{{{\mathcal{F}}_{t_{i}}}}{[L_{t_{i+1}}-L_{t_{i}}%
]{\big|}\Big],}%
\]
with the supremum taken over all partitions $\pi:0=t_{0}<t_{1}<\cdots
<t_{N}=T,$ where $L$ is a c\`{a}dl\`{a}g stochastic process such that
$\mathbb{E}\left\vert L_{t}\right\vert <\infty$ for all $t\in\left[
0,T\right]  .$

It can be proved that there exists a positive constant $C$ independent of
$n\in\mathbb{N}^{\ast}$ such that%
\begin{equation}%
\begin{array}
[c]{l}%
\displaystyle\mathrm{CV}_{T}\left(  Y^{n}\right)  +\mathrm{CV}_{T}\left(
K^{1,n}\right)  +\mathrm{CV}_{T}\left(  K^{2,n}\right)  +{\mathrm{CV}%
_{T}\left(  M^{n}\right)  }\medskip\\
\displaystyle\leq2\mathbb{E}\int_{0}^{T}|U_{r}^{n}|dr+2\mathbb{E}\int_{0}%
^{T}|V_{r}^{n}|dA_{r}^{n}+\int_{0}^{T}\left\vert f_{n}{(r,{X_{r}^{n}%
,Y{_{r}^{n}})}}\right\vert {dr+}\int_{0}^{T}\left\vert g_{n}{(r,{X_{r}%
^{n},Y{_{r}^{n}})}}\right\vert {{dA_{r}^{n}}}\medskip\\
\displaystyle\leq C.
\end{array}
\label{bound_cv}%
\end{equation}
Using, for instance, \cite[Proposition 1.47]{pa-ra/12} (and the calculus from
the proof of \cite[Theorem 4.53]{pa-ra/12}) it can be show that the sequence
$\left(  X^{n},W,A^{n}\right)  $ is tight in $\left(  {\mathcal{C}%
}([0,T],{\mathbb{R}}^{d})\right)  ^{2}\times{\mathcal{C}}([0,T],{\mathbb{R}%
}).$

Now, by (\ref{bound_sup}) and (\ref{bound_cv}), the criterion presented in
Theorem \ref{Theorem 2_Annexes} (See the Annex) ensures tightness with respect
to the \textrm{S}--topology of the sequence $(Y^{n},M^{n},K^{1,n},K^{2,n})$
and therefore%
\[
\Gamma^{n}:=(X^{n},W,A^{n},Y^{n},M^{n},K^{1,n},K^{2,n})
\]
is tight in $\mathbb{X}:=\left(  {\mathcal{C}}([0,T],{\mathbb{R}}^{d})\right)
^{2}\times{\mathcal{C}}([0,T],{\mathbb{R}})\times\mathbb{D}^{4}$.

From Theorem \ref{Theorem 2_Annexes}, it follows that there exists a
subsequence (still denoted by $n$) and the following processes, defined on the
same probability space $\left(  \bar{\Omega},\mathcal{\bar{F}},\mathbb{\bar
{P}}\right)  :=\left(  \left[  0,1\right]  ,\mathcal{B}_{\left[  0,1\right]
},d\lambda\right)  $ (where $d\lambda$ denotes the Lebesgue measure),%
\[
\bar{\Gamma}^{n}:=(\bar{X}^{n},\bar{W}^{n},\bar{A}^{n},\bar{Y}^{n},\bar{M}%
^{n},\bar{K}^{1,n},\bar{K}^{2,n}):\bar{\Omega}\rightarrow\mathbb{X}%
\]
and%
\[
\bar{\Gamma}:=(\bar{X},\bar{W},\bar{A},\bar{Y},\bar{M},\bar{K}^{1},\bar{K}%
^{2}):\bar{\Omega}\rightarrow\mathbb{X}%
\]
such that%
\[
\Gamma^{n}\sim\bar{\Gamma}^{n}%
\]
and%
\[
\text{for all }\omega\in\bar{\Omega},\;\bar{\Gamma}^{n}\left(  \omega\right)
\xrightarrow[\mathrm{U}^{3}\times \mathrm{S}^{4}]{\;\;\;\;\;\;\;\;\;\;\;\;\;\;\;\;\;\;\;}\bar
{\Gamma}\left(  \omega\right)  \text{, as }n\rightarrow\infty,
\]
where $\sim$ denotes the equality in law of both stochastic processes, the
\textrm{U}$^{3}$--convergence means the uniform convergence of $(\bar{X}%
^{n},\bar{W}^{n},\bar{A}^{n})$ on the space of continuous
function$\mathcal{\ }$and the \textrm{S}$^{4}$--convergence of $(\bar{Y}%
^{n},\bar{M}^{n},\bar{K}^{1,n},\bar{K}^{2,n})$ is defined by Definition
\ref{Definition 2_Annexes} in the Annexes.

Morover, by a.s. Skorohod's representation theorem on $\left(  \left[
0,1\right]  ,\mathcal{B}_{\left[  0,1\right]  },d\lambda\right)  $ (see
\cite{ja/97}), we obtain%
\[%
\begin{array}
[c]{r}%
(\bar{X}^{n}\left(  \omega\right)  ,\bar{W}^{n}\left(  \omega\right)  ,\bar
{A}^{n}\left(  \omega\right)  )\longrightarrow(\bar{X}\left(  \omega\right)
,\bar{W}\left(  \omega\right)  ,\bar{A}\left(  \omega\right)  ),\;\text{as
}n\rightarrow\infty,\medskip\\
\text{in }\left(  {\mathcal{C}}([0,T],{\mathbb{R}}^{d})\right)  ^{2}%
\times{\mathcal{C}}([0,T],{\mathbb{R}})\text{, for all }\omega\in\bar{\Omega}%
\end{array}
\]
and there exist a countable set $Q\subset\lbrack0,T)$ such that, for any
$t\in\left[  0,T\right]  \setminus Q,$%
\begin{equation}
(\bar{Y}_{t}^{n},\bar{M}_{t}^{n},\bar{K}_{t}^{1,n},\bar{K}_{t}^{2,n}%
)\xrightarrow[\lambda-\mathrm{a.s.}]{\;\;\;\;\;\;\;\;\;}(\bar{Y}_{t},\bar
{M}_{t},\bar{K}_{t}^{1},\bar{K}_{t}^{2}),\quad\text{as }n\rightarrow
\infty.\label{technical ineq 10}%
\end{equation}
We remark that, in particular, we have $u\left(  t_{n},x_{n}\right)
=Y_{t_{n}}^{n}=\bar{Y}_{t_{n}}^{n}\,$, since are deterministic.

We are now able to pass to the limit in (\ref{def (X,A) 2}): since%
\[
(X^{n},W,A^{n})\sim(\bar{X}^{n},\bar{W}^{n},\bar{A}^{n}),
\]
we deduce, using standard arguments (see, e.g., \cite{bu-ra/03} or Proposition
2.15 and the proof of Theorem 3.54 from \cite{pa-ra/12}), that $(\bar{X}%
^{n},\bar{W}^{n},\bar{A}^{n})$ satisfies equation (\ref{def (X,A) 2}), the
limit process $(\bar{X},\bar{W},\bar{A})$ satisfies equation (\ref{FBSDE}%
$-a,b$) and $\bar{W}^{n}$ (respectively, $\bar{W}$), is a Brownian motion with
respect to the filtration $\big(\mathcal{F}_{s}^{\bar{X}^{n},\bar{W}^{n}%
}\big)$ (respectively, $\big(\mathcal{F}_{s}^{\bar{X},\bar{W}}\big)$).

Hence we have%
\[
(\bar{X},\bar{A})=(\bar{X}^{t,x},\bar{A}^{t,x}),
\]
where $({\bar{X}_{r}^{t,x}{,}}\bar{A}_{r}^{t,x})_{r\in\left[  0,T\right]  }$
is the solution of equation (\ref{FBSDE}$-a,b$), considered on the probability
space $\left(  \bar{\Omega},\mathcal{\bar{F}},\mathbb{\bar{P}}\right)  $ with
driving Brownian motion $\bar{W}.$

We highlight that the continuous process $\bar{A}$ is $(\mathcal{F}^{\bar
{X},\bar{W}})$--adapted. In fact, $(\bar{X},\bar{A})$ is even $(\mathcal{F}%
_{s}^{\bar{W}})$--adapted\footnote{We are thankful to the reviewer for this
suggestion.}. Indeed, under the assumptions on the coefficients, equation
(\ref{FBSDE}$-a,b$) has the property of pathwise uniqueness and, thus, due to
Yamada-Watanabe's Theorem any weak solution is a strong one, and so is
$(\bar{X},\bar{A})$ a strong solution with respect to the driving Brownian
motion $\bar{W}$. But this has as consequence that $(\bar{X},\bar{A})$ is a
non anticipative, measurable functional of $\bar{W}$ and, hence,
$(\mathcal{F}_{s}^{\bar{W}})$--adapted.\medskip

Concerning equation (\ref{generalized BSDE 3}), we state first the following
technical result (which proof is left to the reader):

\begin{lemma}
\label{technical lemma 1}Let%
\[
(X^{n},A^{n},Y^{n},M^{n},K^{1,n},K^{2,n})\sim(\bar{X}^{n},\bar{A}^{n},\bar
{Y}^{n},\bar{M}^{n},\bar{K}^{1,n},\bar{K}^{2,n}),
\]
$G,H:\left[  0,T\right]  \times\mathbb{R}^{d}\times\mathbb{R}\rightarrow
\mathbb{R}$ be two continuous functions and $\phi:{\mathcal{C}}%
([0,T],{\mathbb{R}}^{d})\times{\mathcal{C}}([0,T],{\mathbb{R}})\times
\mathbb{D}^{4}\rightarrow\mathbb{R}$ be a measurable function.

If%
\[
\phi(X^{n},A^{n},Y^{n},M^{n},K^{1,n},K^{2,n})=\int_{s_{1}}^{s_{2}}G\left(
r,X_{r}^{n},Y_{r}^{n}\right)  dA_{r}^{n}+\int_{s_{1}}^{s_{2}}H\left(
r,X_{r}^{n},Y_{r}^{n}\right)  dr\,,
\]
then%
\[
\phi(\bar{X}^{n},\bar{A}^{n},\bar{Y}^{n},\bar{M}^{n},\bar{K}^{1,n},\bar
{K}^{2,n})=\int_{s_{1}}^{s_{2}}G\left(  r,\bar{X}_{r}^{n},\bar{Y}_{r}%
^{n}\right)  d\bar{A}_{r}^{n}+\int_{s_{1}}^{s_{2}}H\left(  r,\bar{X}_{r}%
^{n},\bar{Y}_{r}^{n}\right)  dr\,.
\]
$\smallskip$
\end{lemma}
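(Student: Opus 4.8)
The plan is to reduce the statement to the elementary fact that equality in law of two tuples of processes forces equality in law of any measurable functional built from their trajectories, together with the observation that both sides of the claimed identity \emph{are} such functionals applied to the same tuple $(X^{n},A^{n},Y^{n},M^{n},K^{1,n},K^{2,n})$, respectively $(\bar X^{n},\bar A^{n},\bar Y^{n},\bar M^{n},\bar K^{1,n},\bar K^{2,n})$. The only subtlety is that the map
\[
(x,a,y,m,k^{1},k^{2})\longmapsto \int_{s_{1}}^{s_{2}}G\bigl(r,x(r),y(r)\bigr)\,da(r)+\int_{s_{1}}^{s_{2}}H\bigl(r,x(r),y(r)\bigr)\,dr
\]
must be shown to be a \emph{measurable} functional on $\mathcal{C}([0,T],\mathbb{R}^{d})\times\mathcal{C}([0,T],\mathbb{R})\times\mathbb{D}^{4}$; once this is established, one simply takes $\phi$ to be this functional (it does not depend on the $m,k^{1},k^{2}$ coordinates, but that is harmless) and the conclusion is immediate from $\Gamma^{n}\sim\bar\Gamma^{n}$.

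First I would handle the Lebesgue integral $\int_{s_{1}}^{s_{2}}H(r,x(r),y(r))\,dr$: since $H$ is continuous and $r\mapsto(x(r),y(r))$ is c\`adl\`ag, the integrand is a bounded (on compacts) Borel function of $r$, the integral is well defined, and measurability in $(x,y)$ follows because the integral is a pointwise limit of Riemann-type sums $\sum_{j}H(r_{j},x(r_{j}),y(r_{j}))(r_{j+1}-r_{j})$ over a fixed sequence of refining partitions, each term of which is a continuous (hence measurable) functional of $(x,y)$ in the product topology on $\mathcal{C}\times\mathbb{D}$. Second, for the Stieltjes integral $\int_{s_{1}}^{s_{2}}G(r,x(r),y(r))\,da(r)$ I would use that $a$ is continuous and nondecreasing, so the integral against $da$ is a genuine Lebesgue--Stieltjes integral; approximating again by sums $\sum_{j}G(r_{j},x(r_{j}),y(r_{j}))\bigl(a(r_{j+1})-a(r_{j})\bigr)$ along the same refining partitions, each summand is jointly continuous in $(x,a,y)$, and the sums converge (pointwise in the path triple) to the integral because $G(\cdot,x(\cdot),y(\cdot))$ has at most countably many discontinuities while $a$ is continuous, so the Riemann--Stieltjes sums converge. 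Hence the limit is a measurable functional.

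With the measurability in hand, the argument closes in one line: apply the equality in law $(X^{n},A^{n},Y^{n},M^{n},K^{1,n},K^{2,n})\sim(\bar X^{n},\bar A^{n},\bar Y^{n},\bar M^{n},\bar K^{1,n},\bar K^{2,n})$ to the measurable functional $\phi$ just constructed; since $\phi$ evaluated on the first tuple equals $\int_{s_{1}}^{s_{2}}G(r,X^{n}_{r},Y^{n}_{r})\,dA^{n}_{r}+\int_{s_{1}}^{s_{2}}H(r,X^{n}_{r},Y^{n}_{r})\,dr$ identically (by the very definition of the pathwise integrals), the same equality holds with barred processes. I expect the main obstacle to be purely bookkeeping: making precise that the Riemann--Stieltjes approximation converges for \emph{every} path triple $(x,a,y)$ (not just almost every one), which is where continuity of $a$ and the c\`adl\`ag regularity of $x,y$ are used; this is exactly why the authors remark that the proof is ``left to the reader.''
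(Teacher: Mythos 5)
The paper offers no proof of this lemma (it is explicitly left to the reader), and your overall strategy is the intended one: show that
\[
\Psi(x,a,y,m,k^{1},k^{2}):=\int_{s_{1}}^{s_{2}}G\bigl(r,x(r),y(r)\bigr)\,da(r)+\int_{s_{1}}^{s_{2}}H\bigl(r,x(r),y(r)\bigr)\,dr
\]
is a measurable functional on ${\mathcal{C}}([0,T],{\mathbb{R}}^{d})\times{\mathcal{C}}([0,T],{\mathbb{R}})\times\mathbb{D}^{4}$ and then transfer the identity through the equality in law of the two tuples. Your convergence argument for the approximating sums is sound: the integrand $r\mapsto G(r,x(r),y(r))$ is c\`adl\`ag, its (at most countable) set of discontinuities is $da$-null because $a$ is continuous, and dominated convergence gives pointwise convergence of the sums for \emph{every} path triple; this is precisely the content of (\ref{aprox-intr}) in the Annexes. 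However, your justification of measurability of each summand is wrong as stated: the evaluations $y\mapsto y(r)$ are \emph{not} continuous on $(\mathbb{D},\mathrm{S})$. What saves the argument, and what you actually need, is that they are measurable with respect to the $\sigma$--algebra generated by the \textrm{S}--topology, which by Jakubowski's Theorem 2.13 (see Remark \ref{Remark 2_Annexes}, point 5) coincides with the $\sigma$--algebra generated by the projections; with ``continuous'' replaced by ``measurable w.r.t.\ this $\sigma$--algebra'' your construction of $\Psi$ as a pointwise limit of measurable sums is correct.

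The second defect is in the closing step: you must not ``take $\phi$ to be this functional.'' The lemma concerns an \emph{arbitrary} measurable $\phi$ which is only assumed to coincide with $\Psi$ along the trajectories of $(X^{n},A^{n},Y^{n},M^{n},K^{1,n},K^{2,n})$, $\mathbb{P}$--a.s.; if you redefine $\phi:=\Psi$ the statement becomes vacuous and is not the one used afterwards (there $\phi$ encodes, e.g., the left--hand side of (\ref{generalized BSDE 3}), which as a functional on path space is certainly not equal to $\Psi$). The correct conclusion keeps the given $\phi$: the set $B:=\{\phi=\Psi\}$ is a measurable subset of the path space (by measurability of $\phi$ and of $\Psi$ with respect to the projection $\sigma$--algebra), the hypothesis says that the law of the unbarred tuple charges $B$ with full mass, and since the barred tuple has the same law on this $\sigma$--algebra, $\mathbb{\bar{P}}$--a.s.\ the barred tuple lies in $B$, which is exactly the asserted identity $\phi(\bar{X}^{n},\bar{A}^{n},\bar{Y}^{n},\bar{M}^{n},\bar{K}^{1,n},\bar{K}^{2,n})=\Psi(\bar{X}^{n},\bar{A}^{n},\bar{Y}^{n},\bar{M}^{n},\bar{K}^{1,n},\bar{K}^{2,n})$. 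With these two repairs your proof is complete and follows the route the authors intend.
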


Hence we deduce that $\bar{X}^{n},\bar{A}^{n},\bar{Y}^{n},\bar{M}^{n},\bar
{K}^{1,n}$ and $\bar{K}^{2,n}$ are continuous and%
\begin{equation}%
\begin{array}
[c]{r}%
\displaystyle\bar{Y}_{s}^{n}+(\bar{K}_{T}^{1,n}-\bar{K}_{s}^{1,n})+(\bar
{K}_{T}^{2,n}-\bar{K}_{s}^{2,n}){=h(\bar{X}}_{T}^{n}){+\int_{s}^{T}}%
f_{n}{(r,{\bar{X}_{r}^{n},\bar{Y}{_{r}^{n}})}dr}\medskip\\
\displaystyle{+}\int_{s}^{T}g_{n}{(r,{\bar{X}_{r}^{n},\bar{Y}{_{r}^{n}}%
)d\bar{A}_{r}^{n}}}-(\bar{M}_{T}^{n}-\bar{M}_{s}^{n})\,,\;s\in\lbrack0,T].
\end{array}
\label{generalized BSDE 4}%
\end{equation}
In addition, we have%
\[
{{\bar{X}_{s}^{n}=x}}_{n}~,\quad\bar{Y}_{s}^{n}=\bar{Y}_{t_{n}}^{n}%
\,,\quad{{\bar{A}_{s}^{n}=}}\bar{K}_{s}^{1,n}=\bar{K}_{s}^{2,n}=0\,,\quad
\bar{M}_{s}^{n}=0\,,\quad\text{for all }s\in\left[  0,t_{n}\right]  .
\]
In order to show that (\ref{elements of subdiff 5}) holds true for $(\bar
{Y}^{n},\bar{K}^{1,n},\bar{K}^{2,n})$, the next Lemma can be proved (see,
e.g., the proof of \cite[Proposition 1.19]{pa-ra/12}).

\begin{lemma}
\label{technical lemma 2}Let $t\in\left[  0,T\right]  $ be fixed. Let
$\varphi:\mathbb{R}^{d}\rightarrow(-\infty,+\infty]$ be a proper convex l.s.c.
function such that $\varphi\left(  y\right)  \geq\varphi\left(  0\right)  =0,$
for all $y\in\mathbb{R}^{d}.$ Let $L,\bar{L}$ be $\mathbb{R}$-valued
continuous stochastic processes (with $L_{0}=0$ and $L$ is a non-decreasing
stochastic process) and $S,N,\bar{S},\bar{N}$ be $\mathbb{R}^{d}$-valued
continuous stochastic processes on $\left[  0,T\right]  $ defined on the
probability spaces $\left(  \Omega,\mathcal{F},\mathbb{P}\right)  $ and,
respectively, $\left(  \bar{\Omega},\mathcal{\bar{F}},\mathbb{\bar{P}}\right)
.$ If $\left\updownarrow N\right\updownarrow _{T}<\infty$, $\mathbb{P}$--a.s.,%
\[
(L,S,N)\sim(\bar{L},\bar{S},\bar{N})
\]
and $\mathbb{P}$--a.s.%
\[
\int_{s_{1}}^{s_{2}}\varphi(S_{r})dL_{r}\leq\int_{s_{1}}^{s_{2}}\left\langle
S_{r}-v,dN_{r}\right\rangle +\int_{s_{1}}^{s_{2}}\varphi(v)dL_{r},\;\text{for
all }v\in{\mathbb{R}},0\leq t\leq s_{1}\leq s_{2}\leq T,
\]
then $\mathbb{\bar{P}}$--a.s.%
\[
\bar{L}\text{ is a non-decreasing stochastic process,}\quad\left\updownarrow
\bar{N}\right\updownarrow _{T}<\infty
\]
and $\mathbb{\bar{P}}$--a.s.%
\[
\int_{s_{1}}^{s_{2}}\varphi(\bar{S}_{r})d\bar{L}_{r}\leq\int_{s_{1}}^{s_{2}%
}\left\langle \bar{S}_{r}-v,d\bar{N}_{r}\right\rangle +\int_{s_{1}}^{s_{2}%
}\varphi(v)d\bar{L}_{r},\;\text{for all }v\in{\mathbb{R}},0\leq t\leq
s_{1}\leq s_{2}\leq T.
\]

\end{lemma}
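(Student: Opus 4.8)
The plan is to recast both the hypothesis and the desired conclusion as the single statement that the path of $(L,S,N)$, respectively of $(\bar L,\bar S,\bar N)$, belongs $\mathbb{P}$--a.s., respectively $\bar{\mathbb{P}}$--a.s., to one and the same Borel subset $E$ of $\mathcal{C}([0,T],\mathbb{R})\times\mathcal{C}([0,T],\mathbb{R}^{d})\times\mathcal{C}([0,T],\mathbb{R}^{d})$. Once this is achieved the conclusion is immediate: equality in law transfers a.s.\ membership in a Borel set, so $(\bar L,\bar S,\bar N)\in E$ $\bar{\mathbb{P}}$--a.s., and on that event $\bar L$ is non-decreasing, $\left\updownarrow\bar N\right\updownarrow_{T}<\infty$, and the stated inequality holds.

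To produce such an $E$ I would first reduce the uncountable family of constraints to a countable one. Taking $v=0\in\mathrm{dom}\,\varphi$ (recall $\varphi(0)=0$) in the hypothesis gives $\int_{t}^{T}\varphi(S_{r})\,dL_{r}\le\int_{t}^{T}\langle S_{r},dN_{r}\rangle<\infty$ $\mathbb{P}$--a.s., so on $[t,T]$ the map $r\mapsto\varphi(S_{r})$ is $dL$--integrable and each of the three terms of the inequality is continuous in $(s_{1},s_{2})$ on the simplex $\Delta=\{t\le s_{1}\le s_{2}\le T\}$ (the measures $dL$ and $dN$ being atomless, as $L,N$ are continuous); hence it suffices to impose the inequality for $(s_{1},s_{2})$ ranging over a countable dense subset $\Delta_{0}$ of $\Delta$. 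By Proposition~\ref{equiv-subdiff} (see also \cite[Proposition 1.19]{pa-ra/12}) there is a fixed countable set $D\subset\mathrm{dom}\,\varphi$, with $0\in D$, such that validity of the inequality for every $v\in D$ forces it for every $v\in\mathbb{R}^{d}$ (it is trivial when $v\notin\mathrm{dom}\,\varphi$). I would then let $E$ be the set of triples $(\ell,\xi,\nu)$ with $\ell_{0}=0$, $\ell$ non-decreasing, $\left\updownarrow\nu\right\updownarrow_{T}<\infty$, and
\[
\int_{s_{1}}^{s_{2}}\varphi(\xi_{r})\,d\ell_{r}\le\int_{s_{1}}^{s_{2}}\langle\xi_{r}-v,d\nu_{r}\rangle+\varphi(v)\,(\ell_{s_{2}}-\ell_{s_{1}})\quad\text{for all }v\in D,\ (s_{1},s_{2})\in\Delta_{0}.
\]
The hypothesis, together with the a.s.\ facts $L_{0}=0$, $L$ non-decreasing and $\left\updownarrow N\right\updownarrow_{T}<\infty$, says exactly that $(L,S,N)\in E$ $\mathbb{P}$--a.s.; transferring, $(\bar L,\bar S,\bar N)\in E$ $\bar{\mathbb{P}}$--a.s.; and on that full event, running the two reductions backwards (using $v=0\in D$ to recover $\int_{t}^{T}\varphi(\bar S_{r})\,d\bar L_{r}<\infty$ and hence the continuity in $(s_{1},s_{2})$, and then the density property of $D$), the inequality holds for all $v\in\mathbb{R}^{d}$ and all $t\le s_{1}\le s_{2}\le T$, which is the conclusion.

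The step I expect to be the crux is the Borel measurability of $E$, i.e.\ that each condition entering its definition is a measurable functional of $(\ell,\xi,\nu)$. The sets $\{\ell_{0}=0,\ \ell\text{ non-decreasing}\}$ and $\{\left\updownarrow\nu\right\updownarrow_{T}<\infty\}$ are Borel because, by continuity, these properties need only be tested along rational times and rational partitions. On the Borel set $\{\left\updownarrow\nu\right\updownarrow_{T}<\infty\}$ the map $(\xi,\nu)\mapsto\int_{s_{1}}^{s_{2}}\langle\xi_{r}-v,d\nu_{r}\rangle$ is a pointwise limit, along a fixed refining sequence of rational partitions, of the continuous Riemann--Stieltjes sum functionals, convergence holding since $\xi$ is continuous and $\nu$ has finite variation; hence it is Borel. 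Finally $(\xi,\ell)\mapsto\int_{s_{1}}^{s_{2}}\varphi(\xi_{r})\,d\ell_{r}\in[0,+\infty]$ is Borel, being the increasing limit of the continuous functionals $(\xi,\ell)\mapsto\int_{s_{1}}^{s_{2}}\varphi_{k}(\xi_{r})\,d\ell_{r}$ associated with the Moreau--Yosida approximations $\varphi_{k}\nearrow\varphi$ (real-valued, $C^{1}$, convex and nonnegative with $\varphi_{k}(0)=0$). With these measurability statements in hand the rest is bookkeeping, parallel to \cite[Proposition 1.19]{pa-ra/12}.
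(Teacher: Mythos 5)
Your argument is correct and is essentially the one the paper intends: the paper does not prove this lemma at all, deferring to the proof of \cite[Proposition 1.19]{pa-ra/12}, which proceeds exactly as you do --- reduce the hypothesis to a countable family of conditions, package them into a Borel subset of the path space (with the measurability of the Stieltjes integrals obtained via Riemann--Stieltjes sums and of $\int\varphi(\xi_r)\,d\ell_r$ via monotone approximation of $\varphi$), and transfer almost-sure membership through equality in law. The one point to tighten is your parenthetical that the inequality is ``trivial when $v\notin\mathrm{Dom}(\varphi)$'': under the convention that the integral of $\varphi(v)=+\infty$ against the zero measure vanishes, the case where $L$ is constant on $[s_1,s_2]$ still yields the nontrivial constraint $0\leq\int_{s_1}^{s_2}\langle S_r-v,dN_r\rangle$, so your countable set $D$ should also contain a dense subset of $\mathbb{R}^d$ (e.g.\ $\mathbb{Q}^d$), after which continuity of $v\mapsto\int_{s_1}^{s_2}\langle S_r-v,dN_r\rangle$ recovers all $v$; in addition, $D\cap\mathrm{Dom}(\varphi)$ must be chosen dense in the epigraph sense (so that $\varphi(v_k)\to\varphi(v)$, not merely $v_k\to v$), since $\varphi$ need not be continuous up to the boundary of its domain --- both are one-line repairs that do not affect the structure of your proof.
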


Hence, we remark (see (\ref{subdiff apartenence 7}) and Proposition
\ref{equiv-subdiff}) that we have as measure on $\left[  t_{n},T\right]  ,$%
\begin{equation}
d\bar{K}_{s}^{1,n}\in\partial\varphi(\bar{Y}_{s}^{n})ds,~\mathbb{\bar{P}%
}\text{--a.s.}\quad\text{and}\quad d\bar{K}_{s}^{2,n}\in\partial\psi(\bar
{Y}_{s}^{n})d\bar{A}_{s}^{n},~\mathbb{\bar{P}}\text{--a.s..}
\label{elements of subdiff 6}%
\end{equation}
Now we can pass to the limit in (\ref{generalized BSDE 4}).

First, applying Remark \ref{Remark 2_Annexes}, there exists a countable set
$Q\subset(0,T)$ such that for all $r\in\left[  0,T\right]  \setminus\left(
Q\cup\left\{  t\right\}  \right)  $,%
\[%
\begin{array}
[c]{l}%
\big|f_{n}{(r,{\bar{X}_{r}^{n},\bar{Y}{_{r}^{n}})-}\mathbb{1}}_{\left[
t,T\right]  }\left(  r\right)  f{(r,{\bar{X}_{r}^{t,x},\bar{Y}}}_{r}{{)}%
}\big|\medskip\\
\leq\big|f{(r,{\bar{X}_{r}^{n},\bar{Y}{_{r}^{n}})-}}f{(r,{\bar{X}_{r}%
^{t,x},\bar{Y}}}_{r}{{)}}\big|+\big|{{{\mathbb{1}}_{\left[  t_{n},T\right]
}\left(  r\right)  -{\mathbb{1}}_{\left[  t,T\right]  }\left(  r\right)  }%
}\big|\big|f{(r,{\bar{X}_{r}^{t,x},\bar{Y}}}_{r}{{)}}\big|\rightarrow
0,\quad\text{as }n\rightarrow\infty,
\end{array}
\]
and%
\[
\mathbb{\bar{E}}\big|f_{n}{(r,{\bar{X}_{r}^{n},\bar{Y}{_{r}^{n}})-}\mathbb{1}%
}_{\left[  t,T\right]  }\left(  r\right)  f{(r,{\bar{X}_{r}^{t,x},\bar{Y}}%
}_{r}{{)}}\big|^{2}\leq C+C\,\mathbb{E}\sup_{s\in\left[  0,T\right]
}\left\vert {{\bar{Y}{_{s}^{n}}}}\right\vert ^{2}+C\,\mathbb{E}\sup
_{s\in\left[  0,T\right]  }\left\vert {{\bar{Y}{_{s}}}}\right\vert ^{2}\leq
C_{1}<\infty.
\]
Then, by the uniform integrability property on $\bar{\Omega},$
\[
\mathbb{\bar{E}}\big|f_{n}{(r,{\bar{X}_{r}^{n},\bar{Y}{_{r}^{n}})-}\mathbb{1}%
}_{\left[  t,T\right]  }\left(  r\right)  f{(r,{\bar{X}_{r}^{t,x},\bar{Y}}%
}_{r}{{)}}\big|\rightarrow0,\quad\text{as }n\rightarrow\infty,\quad\text{for
all }r\in\left[  0,T\right]  \setminus\left(  Q\cup\left\{  t\right\}
\right)  .
\]
Since%
\[%
\begin{array}
[c]{l}%
\displaystyle\int_{0}^{T}\left(  \mathbb{\bar{E}}\big|f_{n}{(r,{\bar{X}%
_{r}^{n},\bar{Y}{_{r}^{n}})-}\mathbb{1}}_{\left[  t,T\right]  }\left(
r\right)  f{(r,{\bar{X}_{r}^{t,x},\bar{Y}}}_{r}{{)}}\big|\right)
^{2}dr\medskip\\
\displaystyle\leq\int_{0}^{T}\mathbb{\bar{E}}\big|f_{n}{(r,{\bar{X}_{r}%
^{n},\bar{Y}{_{r}^{n}})-}\mathbb{1}}_{\left[  t,T\right]  }\left(  r\right)
f{(r,{\bar{X}_{r}^{t,x},\bar{Y}}}_{r}{{)}}\big|^{2}dr\leq C_{1}T,
\end{array}
\]
by the uniform integrability property on $\left[  0,T\right]  ,$ we get%
\[%
\begin{array}
[c]{l}%
\displaystyle\mathbb{\bar{E}}\int_{0}^{T}\big|f_{n}{(r,{\bar{X}_{r}^{n}%
,\bar{Y}{_{r}^{n}})-}\mathbb{1}}_{\left[  t,T\right]  }\left(  r\right)
f{(r,{\bar{X}_{r}^{t,x},\bar{Y}}}_{r}{{)}}\big|dr\medskip\\
\displaystyle=\int_{0}^{T}\mathbb{\bar{E}}\big|f_{n}{(r,{\bar{X}_{r}^{n}%
,\bar{Y}{_{r}^{n}})-}\mathbb{1}}_{\left[  t,T\right]  }\left(  r\right)
f{(r,{\bar{X}_{r}^{t,x},\bar{Y}}}_{r}{{)}}\big|dr\rightarrow0.
\end{array}
\]
In particular from $L^{1}$--convergence it follows that on a subsequence
(indexed also by $n$),
\[
\int_{0}^{T}\big|f_{n}{(r,{\bar{X}_{r}^{n},\bar{Y}{_{r}^{n}})-}\mathbb{1}%
}_{\left[  t,T\right]  }\left(  r\right)  f{(r,{\bar{X}_{r}^{t,x},\bar{Y}}%
}_{r}{{)}}\big|dr\rightarrow0,\quad\text{as }n\rightarrow\infty,\;\mathbb{\bar
{P}}\text{--a.s.}%
\]
Hence, for all $s\in\left[  0,T\right]  ,$%
\begin{equation}
\lim_{n\rightarrow\infty}{\int_{s}^{T}}f_{n}{(r,{\bar{X}_{r}^{n},\bar{Y}%
{_{r}^{n}})}dr=\int_{s}^{T}}f{(r,{\bar{X}_{r}^{t,x},\bar{Y}}}_{r}{{)}%
dr,}\;\mathbb{\bar{P}}-a.s.. \label{technical limit 1}%
\end{equation}
For the Riemann--Stieltjes integral we will apply part $\mathrm{(III)}$ of
Theorem \ref{Theorem 1_Annexes} as well as Proposition
\ref{Proposition 4_Annexes} in the Annexes. Hence, from
(\ref{technical ineq 10}), we infer that there exists a countable set
$Q\subset\left(  0,T\right)  $ such that for all $s\in\lbrack0,T]\setminus Q$%
\begin{equation}
\lim_{n\rightarrow\infty}\int_{s}^{T}g_{n}{(r,{\bar{X}_{r}^{n},\bar{Y}%
{_{r}^{n}})d\bar{A}_{r}^{n}}=}\int_{s}^{T}g{(r,{\bar{X}_{r}^{t,x},\bar{Y}%
{_{r}})d\bar{A}_{r}^{t,x}}}\,. \label{technical limit 2}%
\end{equation}
It follows that%
\begin{equation}%
\begin{array}
[c]{r}%
\displaystyle\bar{Y}_{s}+(\bar{K}_{T}^{1}-\bar{K}_{s}^{1})+(\bar{K}_{T}%
^{2}-\bar{K}_{s}^{2}){=h(\bar{X}}_{T}^{t,x}){+\int_{s}^{T}\mathbb{1}_{[t,T]}%
}\left(  r\right)  f{(r,{\bar{X}_{r}^{t,x},\bar{Y}{_{r}})}dr}\medskip\\
\displaystyle{+}\int_{s}^{T}g{(r,{\bar{X}_{r}^{t,x},\bar{Y}{_{r}})d\bar{A}%
_{r}^{t,x}}}-(\bar{M}_{T}-\bar{M}_{s})\,,\quad s\in\lbrack0,T]\setminus Q.
\end{array}
\label{generalized BSDE 5}%
\end{equation}
Since the processes ${\bar{Y}}$, ${\bar{M}}$, $\bar{K}^{1}$ and $\bar{K}^{2}$
are c\`{a}dl\`{a}g, the above equality holds for all $s\in\lbrack0,T].$

In addition, we have%
\begin{equation}
\bar{Y}_{s}=\bar{Y}_{t}\,,\quad\bar{K}_{s}^{1}=\bar{K}_{s}^{2}=0\,,\quad
\bar{M}_{s}=0\,,\quad\text{for all }s\in\lbrack0,t].
\label{extension of sol 1}%
\end{equation}
From the above equation, it is immediate that ${\bar{M}}$ is ${\mathcal{F}%
}_{s}^{\bar{X},\bar{W},{\bar{Y}},{\bar{M},}\bar{K}^{1},\bar{K}^{2}}%
\equiv{\mathcal{F}}_{s}^{\bar{W},{\bar{Y}},{\bar{M},}\bar{K}^{1},\bar{K}^{2}}%
$--adapted and it can be shown (see e.g. the proof \cite[Theorem 3.1 (step
3)]{bo-ca/04}) that both ${M^{\bar{X}}}$ and ${\bar{M}}$ are martingales with
respect to the same filtration $\mathcal{\bar{F}}_{s}:={\mathcal{F}}_{s}%
^{\bar{W},{\bar{Y}},{\bar{M},}\bar{K}^{1},\bar{K}^{2}},$ $s\in\lbrack0,T],$
(and this is the reason to work not with the filtration generated by the
Brownian motion).

We mention here that we can deduce, using Proposition
\ref{Proposition 2_Annexes} from the Annexes, that the processes $\bar{K}^{1}$
and $\bar{K}^{2}$ are with bounded variation, since $\mathbb{\bar{E}%
}\left\updownarrow \bar{K}^{i}\right\updownarrow _{T}\leq\liminf
_{n\rightarrow+\infty}\mathbb{\bar{E}}\left\updownarrow \bar{K}^{i,n}%
\right\updownarrow _{T}=\liminf_{n\rightarrow+\infty}\mathbb{E}%
\left\updownarrow K^{i,n}\right\updownarrow _{T}\,$, with $i=\overline{1,2}%
.$\medskip

On our new probability space $\left(  \bar{\Omega},\mathcal{\bar{F}%
},\mathbb{\bar{P}}\right)  $ we consider the solution $(\bar{Y}^{t,x},{\bar
{Z}{^{t,x},\bar{U}}}^{t,x},\bar{V}^{t,x})$ of BSDE (\ref{FBSDE}$-c,d,e$):%
\begin{equation}%
\begin{array}
[c]{r}%
\displaystyle\bar{Y}_{s}^{t,x}+(\bar{K}_{T}^{1,t,x}-\bar{K}_{s}^{1,t,x}%
)+(\bar{K}_{T}^{2,t,x}-\bar{K}_{s}^{2,t,x}){=h(}\bar{X}_{T}^{t,x}){+\int
_{s}^{T}\mathbb{1}_{[t,T]}}\left(  r\right)  f{(r,\bar{X}_{r}^{t,x}{,\bar
{Y}_{r}^{t,x})}dr}\medskip\\
\displaystyle{+\int_{s}^{T}}g{(r,\bar{X}_{r}^{t,x}{,\bar{Y}_{r}^{t,x})d}}%
\bar{A}_{r}^{t,x}-({\bar{M}_{T}^{t,x}-\bar{M}_{s}^{t,x}})\,,\;s\in\lbrack0,T],
\end{array}
\label{generalized BSDE 6}%
\end{equation}
with%
\[
\bar{K}_{s}^{1,t,x}=\int_{0}^{s}\bar{U}_{r}^{t,x}dr\,,\quad\bar{K}_{s}%
^{2,t,x}=\int_{0}^{s}\bar{V}_{r}^{t,x}d\bar{A}_{r}^{t,x}\,,\quad{\bar{M}%
_{s}^{t,x}=\int_{0}^{s}{\bar{Z}{_{r}^{t,x}d\bar{W}}}_{r}\,{,}}%
\]
where $\bar{U}_{r}^{t,x}=\bar{V}_{r}^{t,x}=0$ and ${\bar{Z}{_{r}^{t,x}=0}}$
for $r\in\left[  0,t\right]  $ and as measures on $\left[  t,T\right]  ,$%
\begin{equation}%
\begin{array}
[c]{l}%
\bar{U}_{s}^{t,x}ds\in\partial\varphi(\bar{Y}_{s}^{t,x})ds,\;\mathbb{\bar{P}%
}\text{--a.s.}\quad\text{and}\medskip\\
\bar{V}_{s}^{t,x}d\bar{A}_{s}^{t,x}\in\partial\psi(\bar{Y}_{s}^{t,x})d\bar
{A}_{s}^{t,x},\;\mathbb{\bar{P}}\text{--a.s.}%
\end{array}
\label{subdiff apartenence 1}%
\end{equation}
In addition, we have%
\begin{equation}
\bar{Y}_{s}^{t,x}=\bar{Y}_{t}^{t,x}\,,\quad\bar{K}_{s}^{1,t,x}=\bar{K}%
_{s}^{2,t,x}=0\,,\quad\bar{M}_{s}^{t,x}=0\,,\quad\text{for all }s\in\left[
0,t\right]  . \label{extension of sol 2}%
\end{equation}
The process $(\bar{Y}^{t,x},{\bar{Z}{^{t,x},\bar{U}}}^{t,x},\bar{V}^{t,x})$ is
${\mathcal{F}}_{s}^{\bar{W}}$--adapted, therefore is $\mathcal{\bar{F}}_{s}%
$--adapted. It can be shown that $\bar{W}$ is an $\mathcal{\bar{F}}_{s}%
$--Wiener process (for the proof, see \cite[Corollary 1.96]{pa-ra/12}).
Therefore, by the definition of the stochastic integral we deduce that
${\bar{M}^{t,x}}$ is an $\mathcal{\bar{F}}_{s}$--martingale.

From It\^{o}'s formula for semimartingales (see, e.g., \cite[Chapter II,
Theorem 32]{pr/04}) applied to (\ref{generalized BSDE 5}) and
(\ref{generalized BSDE 6}), and, since ${\bar{M}}$ and ${\bar{M}^{t,x}}$ are
martingale with respect to the same filtration $\left(  \mathcal{\bar{F}}%
_{s}\right)  _{s\in\left[  0,T\right]  }$, we obtain, for any two stopping
times $\sigma,\tau:\bar{\Omega}\rightarrow\left[  0,T\right]  $, such that
$\sigma\leq\tau$, $\mathbb{\bar{P}}$--a.s.%
\begin{equation}%
\begin{array}
[c]{l}%
\displaystyle|\bar{Y}_{\sigma}-\bar{Y}_{\sigma}^{t,x}|^{2}+{\int_{\sigma
}^{\tau}}d[{\bar{M}}+{\bar{K}}^{1}{+\bar{K}}^{2}{-\bar{M}^{t,x}}-\bar
{K}^{1,t,x}-\bar{K}^{2,t,x}]_{r}\medskip\\
\displaystyle\quad+2{\int_{\sigma}^{\tau}}\langle\bar{Y}_{r-}-\bar{Y}%
_{r}^{t,x},d\bar{K}_{r}^{1}-d\bar{K}_{r}^{1,t,x}+d\bar{K}_{r}^{2}-d\bar{K}%
_{r}^{2,t,x}\rangle\medskip\\
\displaystyle=|\bar{Y}_{\tau}-\bar{Y}_{\tau}^{t,x}|^{2}+2{\int_{\sigma}^{\tau
}}\langle\bar{Y}_{r}-\bar{Y}_{r}^{t,x},f{(r,\bar{X}{{_{r}^{t,x}},\bar{Y}%
_{r})-f{(r,\bar{X}_{r}^{t,x}{,\bar{Y}_{r}^{t,x})}}}\rangle dr}\medskip\\
\displaystyle\quad{+}2{\int_{\sigma}^{\tau}}\langle\bar{Y}_{r}-\bar{Y}%
_{r}^{t,x},{{g}(r,\bar{X}{{_{r}^{t,x}},\bar{Y}_{r})-g}(r,\bar{X}_{r}%
^{t,x}{,\bar{Y}_{r}^{t,x})\rangle d}}\bar{A}_{r}^{t,x}\medskip\\
\displaystyle\quad{-}2{\int_{\sigma}^{\tau}}\langle\bar{Y}_{r-}-\bar{Y}%
_{r}^{t,x},d({\bar{M}}_{r}{-\bar{M}_{r}^{t,x}})\rangle\,,
\end{array}
\label{Ito formula 1}%
\end{equation}
where $[{\bar{M}}+{\bar{K}}^{1}{+\bar{K}}^{2}{-\bar{M}^{t,x}}-\bar{K}%
^{1,t,x}-\bar{K}^{2,t,x}]$ is the quadratic variation process of ${\bar{M}%
}+{\bar{K}}^{1}{+\bar{K}}^{2}{-\bar{M}^{t,x}}-\bar{K}^{1,t,x}-\bar{K}^{2,t,x}$.

By taking%
\begin{equation}%
\begin{array}
[c]{l}%
(\bar{Y}_{r}^{t,x},{\bar{K}}_{r}^{1,t,x},{\bar{K}}_{r}^{2,t,x},{\bar{M}%
_{r}^{t,,x}):=}(\bar{Y}_{T}^{t,x},{\bar{K}}_{T}^{1,t,x},{\bar{K}}_{T}%
^{2,t,x},{\bar{M}_{T}^{t,x})}\text{ and}\medskip\\
(\bar{Y}_{r},{\bar{K}}_{r}^{1},{\bar{K}}_{r}^{2},{\bar{M}_{r}):=}(\bar{Y}%
_{T},{\bar{K}}_{T}^{1},{\bar{K}}_{T}^{2},{\bar{M}}_{T}{)}\text{, whenever
}r\geq T,
\end{array}
\label{extension 1}%
\end{equation}
we extend equality (\ref{Ito formula 1}) to any stopping times $\sigma
,\tau:\bar{\Omega}\rightarrow\lbrack0,\infty)$, such that $\sigma\leq\tau.$

Using the assumptions (\ref{Assumpt. 3}) and (\ref{Assumpt. 7}) on $f$ and $g$
and the auxiliary result below, namely inequality (\ref{subdiff ineq}) (see
the next Lemmas \ref{technical lemma 3} and \ref{monoton}), we see that, for
any stopping times $\sigma,\tau:\bar{\Omega}\rightarrow\lbrack0,\infty)$, such
that $\sigma\leq\tau$, $\mathbb{\bar{P}}$--a.s.%
\begin{equation}%
\begin{array}
[c]{l}%
\displaystyle\mathbb{\bar{E}}|\bar{Y}_{\sigma}-\bar{Y}_{\sigma}^{t,x}%
|^{2}+\mathbb{\bar{E}}{\int_{\sigma}^{\tau}}d[{\bar{M}-\bar{M}^{t,x}}]_{r}%
\leq\mathbb{\bar{E}}|\bar{Y}_{\tau}-\bar{Y}_{\tau}^{t,x}|^{2}+2\beta
\mathbb{\bar{E}}{\int_{\sigma}^{\tau}}|\bar{Y}_{r}-\bar{Y}_{r}^{t,x}|^{2}%
d\bar{Q}_{r}\medskip\\
\displaystyle=\mathbb{\bar{E}}|\bar{Y}_{\tau}-\bar{Y}_{\tau}^{t,x}|^{2}%
+2\beta\mathbb{\bar{E}}{\int_{\bar{Q}_{\sigma}}^{\bar{Q}_{\tau}}}|\bar
{Y}_{\bar{Q}_{r}^{-1}}-\bar{Y}_{\bar{Q}_{r}^{-1}}^{t,x}|^{2}dr,
\end{array}
\label{Gronwall}%
\end{equation}
where%
\begin{equation}
s\longmapsto\bar{Q}_{s}\left(  \omega\right)  :=s+\bar{A}_{s\wedge T}%
^{t,x}\left(  \omega\right)  :[0,\infty)\rightarrow\lbrack0,\infty)
\label{def Q}%
\end{equation}
is a continuous strictly increasing and bijective function and $\bar{Q}^{-1}$
denotes the inverse mapping.

Let us consider the stopping times $\sigma=\bar{Q}_{s_{1}}^{-1}$ and
$\tau=\bar{Q}_{s_{2}}^{-1}$, where $0\leq s_{1}\leq s_{2}$.

We obtain, for any $0\leq s_{1}\leq s_{2}\,,$%
\[
\displaystyle\mathbb{\bar{E}}|\bar{Y}_{\bar{Q}_{s_{1}}^{-1}}-\bar{Y}_{\bar
{Q}_{s_{1}}^{-1}}^{t,x}|^{2}\leq\mathbb{\bar{E}}|\bar{Y}_{\bar{Q}_{s_{2}}%
^{-1}}-\bar{Y}_{\bar{Q}_{s_{2}}^{-1}}^{t,x}|^{2}+2\beta{\int_{s_{1}}^{s_{2}}%
}\mathbb{\bar{E}}|\bar{Y}_{\bar{Q}_{r}^{-1}}-\bar{Y}_{\bar{Q}_{r}^{-1}}%
^{t,x}|^{2}dr
\]
and, using the Gronwall's lemma (see, e.g., \cite[Lemma 12]{ma-ra/07} or
\cite[Proposition 6.69]{pa-ra/12}), we deduce%
\[
\displaystyle\mathbb{\bar{E}}\Big(e^{2\beta s_{1}}|\bar{Y}_{\bar{Q}_{s_{1}%
}^{-1}}-\bar{Y}_{\bar{Q}_{s_{1}}^{-1}}^{t,x}|^{2}\Big)\leq2\beta
\mathbb{\bar{E}}\Big(e^{2\beta s_{2}}|\bar{Y}_{\bar{Q}_{s_{2}}^{-1}}-\bar
{Y}_{\bar{Q}_{s_{2}}^{-1}}^{t,x}|^{2}\Big).
\]
Since%
\[
\Big(e^{2\beta s}|\bar{Y}_{\bar{Q}_{s}^{-1}}-\bar{Y}_{\bar{Q}_{s}^{-1}}%
^{t,x}|^{2}\Big)=0,\;\text{for any }s\geq\bar{Q}_{T},\;\text{a.s.}%
\]
and%
\[
\sup_{s\geq0}\Big(e^{2\beta s}|\bar{Y}_{\bar{Q}_{s}^{-1}}-\bar{Y}_{\bar{Q}%
_{s}^{-1}}^{t,x}|^{2}\Big)\leq e^{2\beta\bar{Q}_{T}}\sup_{r\in\left[
0,T\right]  }|\bar{Y}_{r}-\bar{Y}_{r}^{t,x}|^{2},
\]
we deduce, passing to the limit as $s_{2}\rightarrow\infty$ and using the
Lebesgue dominated convergence theorem, that%
\[
\mathbb{\bar{E}}\Big(e^{2\beta s_{1}}\big|\bar{Y}_{\bar{Q}_{s_{1}}^{-1}}%
-\bar{Y}_{\bar{Q}_{s_{1}}^{-1}}^{t,x}\big|^{2}\Big)=0,
\]
for any $s_{1}\geq0,$ which yields the identification of the limit%
\[
\bar{Y}=\bar{Y}^{t,x}.
\]
From inequality (\ref{Gronwall}) we deduce that%
\[
{\bar{M}}={\bar{M}^{t,x}}%
\]
and from (\ref{generalized BSDE 5}) and (\ref{generalized BSDE 6}) we get%
\[
{\bar{K}}^{1}{+\bar{K}}^{2}={\bar{K}}^{1,t,x}+{\bar{K}}^{2,t,x}.
\]
Finally, from equality (\ref{generalized BSDE 4})%
\[
\bar{Y}_{t_{n}}^{t_{n},x_{n}}=\bar{Y}_{t_{n}}^{n}=-\bar{K}_{T}^{1,n}-\bar
{K}_{T}^{2,n}+{h(\bar{X}}_{T}^{n}){+\int_{t_{n}}^{T}}f{(r,{\bar{X}_{r}%
^{n},\bar{Y}{_{r}^{n}})}dr+}\int_{t_{n}}^{T}g{(r,{\bar{X}_{r}^{n},\bar{Y}%
{_{r}^{n}})d\bar{A}_{r}^{n}}}-\bar{M}_{T}^{n}%
\]
and the pointwise convergence outside the countable set $Q\subset\lbrack0,T)$
(see (\ref{technical ineq 10})), we deduce with the help of
(\ref{technical limit 1}) and (\ref{technical limit 2}) that%
\begin{align*}
\lim_{n\rightarrow\infty}\bar{Y}_{t_{n}}^{n}  &  =\bar{Y}_{t}^{t,x}=-\bar
{K}_{T}^{1,t,x}-\bar{K}_{T}^{2,t,x}+{h(\bar{X}}_{T}^{t,x}){+\int_{t}^{T}%
}f{(r,{\bar{X}_{r}^{t,x},\bar{Y}{_{r}^{t,x}})}dr}\\
&  \quad\quad\quad\;\quad{+}\int_{t}^{T}g{(r,{\bar{X}{_{r}^{t,x}},\bar{Y}%
{_{r}^{t,x}})d\bar{A}_{r}^{t,x}}}-\bar{M}_{T}^{t,x}.
\end{align*}
Since as deterministic processes $Y_{t_{n}}^{t_{n},x_{n}}=\bar{Y}_{t_{n}%
}^{t_{n},x_{n}}$ and $\bar{Y}_{t}^{t,x}=Y_{t}^{t,x},$ we have, along a
subsequence,%
\[
\lim_{n\rightarrow\infty}u\left(  t_{n},x_{n}\right)  =\lim_{n\rightarrow
\infty}Y_{t_{n}}^{t_{n},x_{n}}=\lim_{n\rightarrow\infty}\bar{Y}_{t}%
^{t_{n},x_{n}}=\bar{Y}_{t}^{t,x}=Y_{t}^{t,x}=u\left(  t,x\right)  .
\]
$\medskip$

The last part of the proof consists in showing the next two Lemmas.$\medskip$

\begin{lemma}
[c\`{a}dl\`{a}g subdifferential inequality]\label{technical lemma 3}The limit
process $\left(  \bar{K}^{1},\bar{K}^{2}\right)  $ \textit{satisfies:}%
\begin{equation}%
\begin{array}
[c]{l}%
\displaystyle\mathbb{\bar{E}}{\int_{\sigma}^{\tau}}\langle v\left(  r\right)
-\bar{Y}_{r-},d\left(  \bar{K}_{r}^{1}+\bar{K}_{r}^{2}\right)  \rangle
+\mathbb{\bar{E}}{\int_{\sigma}^{\tau}}\varphi\left(  \bar{Y}_{r}\right)
dr+\mathbb{\bar{E}}{\int_{\sigma}^{\tau}}\psi\left(  \bar{Y}_{r}\right)
d\bar{A}_{r}^{t,x}+\frac{1}{2}\mathbb{\bar{E}}{\int_{\sigma}^{\tau}}d[{\bar
{M}}]_{r}\medskip\\
\displaystyle\leq\frac{1}{2}\mathbb{\bar{E}}{\int_{\sigma}^{\tau}}d[{\bar
{M}+\bar{K}}^{1}{+\bar{K}}^{2}]_{r}+\mathbb{\bar{E}}{\int_{\sigma}^{\tau}%
}\varphi\left(  v\left(  r\right)  \right)  dr+\mathbb{\bar{E}}{\int_{\sigma
}^{\tau}}\psi\left(  v\left(  r\right)  \right)  d\bar{A}_{r}^{t,x}%
\end{array}
\label{subineq}%
\end{equation}
\textit{for any stopping times }$\sigma,\tau:\bar{\Omega}\rightarrow\left(
t,T\right)  $\textit{\ such that }$\sigma\leq\tau$\textit{\ and for any
c\`{a}dl\`{a}g stochastic process }$v$\textit{\ such that}
\[
\mathbb{\bar{E}}\sup_{r\in\left[  0,T\right]  }\left\vert v\left(  r\right)
\right\vert ^{2}<\infty.
\]

\end{lemma}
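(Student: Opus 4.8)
The plan is to obtain \eqref{subineq} by passing to the limit in the subdifferential inequalities satisfied by the approximating processes $(\bar{Y}^{n},\bar{K}^{1,n},\bar{K}^{2,n})$, combining the convexity/lower-semicontinuity of $\varphi$ and $\psi$ with the Helly--Bray type convergence theorems collected in the Annexes. First I would record that, by Lemma \ref{technical lemma 2} applied to $(K^{1,n},Y^{n},\text{(the variational datum)})$ and to $(K^{2,n},Y^{n},\cdot)$, the copies $(\bar{Y}^{n},\bar{K}^{1,n},\bar{K}^{2,n})$ inherit from \eqref{elements of subdiff 5} the integrated subgradient inequalities: for any c\`{a}dl\`{a}g $v$ with $\mathbb{\bar{E}}\sup_{r}|v(r)|^{2}<\infty$ and any $t\le s_{1}\le s_{2}\le T$,
\[
\int_{s_{1}}^{s_{2}}\varphi(\bar{Y}_{r}^{n})\,dr\le\int_{s_{1}}^{s_{2}}\langle\bar{Y}_{r}^{n}-v(r),d\bar{K}_{r}^{1,n}\rangle+\int_{s_{1}}^{s_{2}}\varphi(v(r))\,dr,
\]
and the analogous inequality for $\psi$, $\bar{K}^{2,n}$, $d\bar{A}_{r}^{n}$. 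Rewriting $\langle\bar{Y}_{r}^{n}-v(r),d\bar{K}_{r}^{1,n}\rangle=\langle\bar{Y}_{r}^{n},d\bar{K}_{r}^{1,n}\rangle-\langle v(r),d\bar{K}_{r}^{1,n}\rangle$ and using the identity $\langle\bar{Y}_{r}^{n},d\bar{K}_{r}^{1,n}\rangle+\langle\bar{Y}_{r}^{n},d\bar{K}_{r}^{2,n}\rangle+\langle\bar{Y}_{r-}^{n},d\bar{M}_{r}^{n}\rangle$ coming from It\^o's formula applied to $|\bar{Y}^{n}|^{2}$ in \eqref{generalized BSDE 4} (so that $\int\langle\bar{Y}_{r}^{n},d(\bar{K}^{1,n}+\bar{K}^{2,n})_{r}\rangle$ is expressed via $|\bar{Y}_{\sigma}^{n}|^{2}$, $|\bar{Y}_{\tau}^{n}|^{2}$, the $f_{n},g_{n}$ terms, the martingale increments, and the bracket $\tfrac12[\bar{M}^{n}+\bar{K}^{1,n}+\bar{K}^{2,n}]$), one converts the sum of the two inequalities into a form whose left side carries $\tfrac12\,d[\bar{M}^{n}]$ and whose right side carries $\tfrac12\,d[\bar{M}^{n}+\bar{K}^{1,n}+\bar{K}^{2,n}]$ — exactly the shape of \eqref{subineq}.

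Next I would take expectations $\mathbb{\bar{E}}$ and let $n\to\infty$ term by term. For the terms $\mathbb{\bar{E}}\int_{\sigma}^{\tau}\varphi(\bar{Y}_{r}^{n})\,dr$ and $\mathbb{\bar{E}}\int_{\sigma}^{\tau}\psi(\bar{Y}_{r}^{n})\,d\bar{A}_{r}^{n}$ on the left I would use Fatou together with the pointwise $\mathrm{S}$-convergence of $\bar{Y}^{n}$ (hence convergence $\bar{Y}_{r}^{n}\to\bar{Y}_{r}$ for $r\notin Q$, by Remark \ref{Remark 2_Annexes}) and the lower semicontinuity of $\varphi,\psi$, yielding $\liminf\ge\mathbb{\bar{E}}\int_{\sigma}^{\tau}\varphi(\bar{Y}_{r})\,dr$ and, via Proposition \ref{Proposition 4_Annexes} for the weak convergence of the measures $d\bar{A}_{r}^{n}\rightharpoonup d\bar{A}_{r}^{t,x}$, $\liminf\ge\mathbb{\bar{E}}\int_{\sigma}^{\tau}\psi(\bar{Y}_{r})\,d\bar{A}_{r}^{t,x}$; the bracket term $\tfrac12\mathbb{\bar{E}}\int_{\sigma}^{\tau}d[\bar{M}^{n}]_{r}$ is handled by the lower-semicontinuity of quadratic variation under $\mathrm{S}$-convergence of martingales (Proposition \ref{Proposition 2_Annexes}). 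On the right side, the terms $\mathbb{\bar{E}}\int_{\sigma}^{\tau}\varphi(v(r))\,dr$ and $\mathbb{\bar{E}}\int_{\sigma}^{\tau}\psi(v(r))\,d\bar{A}_{r}^{n}$ converge by the weak convergence of $d\bar{A}_{r}^{n}$ and continuity/boundedness of $v$ off $Q$; the mixed terms $\langle v(r),d\bar{K}_{r}^{i,n}\rangle$ and the $f_{n},g_{n}$, martingale-increment, and $|\bar{Y}|^{2}$ terms converge by \eqref{technical limit 1}, \eqref{technical limit 2}, part $\mathrm{(III)}$ of Theorem \ref{Theorem 1_Annexes}, and \eqref{technical ineq 10}, uniform integrability following from \eqref{mg Y 2} and \eqref{bound_cv}. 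One should choose $\sigma,\tau$ to avoid $Q$ (or first prove the inequality for $\sigma,\tau$ with values in $(t,T)\setminus Q$ and then extend by right-continuity of all processes involved).

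\textbf{Main obstacle.} The delicate point is the direction of the inequalities for the bracket terms: $d[\bar{M}^{n}]$ appears with a \emph{favorable} sign on the left (so lower semicontinuity of quadratic variation is what one needs, and this is available from the $\mathrm{S}$-convergence framework), whereas $d[\bar{M}^{n}+\bar{K}^{1,n}+\bar{K}^{2,n}]$ appears on the right, where one would want an \emph{upper} bound in the limit — but since $\bar{K}^{1,n},\bar{K}^{2,n}$ are of bounded variation their contribution to the bracket is controlled by the (uniformly bounded) total variations, and the cross terms $[\bar{M}^{n},\bar{K}^{i,n}]$ vanish because $\bar{M}^{n}$ is continuous; so the right-hand bracket reduces to $[\bar{M}^{n}]$ plus BV-contributions, and the whole passage to the limit is consistent. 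Making this accounting precise — in particular justifying that no mass is lost or gained in the bracket when passing to the $\mathrm{S}$-limit, and that the countable exceptional set $Q$ can be avoided for the stopping times $\sigma,\tau$ — is the part that requires care; everything else is an assembly of the convergence lemmas already prepared in the Annexes.
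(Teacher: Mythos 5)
Your overall strategy (pass to the limit in the subdifferential inequalities satisfied by $(\bar{Y}^{n},\bar{K}^{1,n},\bar{K}^{2,n})$, after re-expressing $\int\langle\bar{Y}^{n}_{r},d\bar{K}^{1,n}_{r}+d\bar{K}^{2,n}_{r}\rangle$ through It\^o's formula for $|\bar{Y}^{n}|^{2}$) is the same as the paper's, but two essential devices are missing and without them several of your limit passages do not go through. First, you test the inequalities directly with an arbitrary c\`adl\`ag $v$. The convergence $\int_{\sigma}^{\tau}\langle v(r),d\bar{K}^{1,n}_{r}+d\bar{K}^{2,n}_{r}\rangle\rightarrow\int_{\sigma}^{\tau}\langle v(r),d\bar{K}^{1}_{r}+d\bar{K}^{2}_{r}\rangle$ is not covered by any of the cited results: part $\mathrm{(III)}$ of Theorem \ref{Theorem 1_Annexes} requires the \emph{integrators} to be continuous and uniformly convergent (here $\bar{K}^{i,n}\rightarrow\bar{K}^{i}$ only in $\mathrm{S}$, and $\bar{K}^{i}$ is merely c\`adl\`ag), while part $\mathrm{(I)}$ requires the \emph{integrand} to be continuous, which a general c\`adl\`ag $v$ is not; moreover $\varphi(v(\cdot))$ and $\psi(v(\cdot))$ need not be continuous or even integrable, so the terms $\int\psi(v(r))\,d\bar{A}^{n}_{r}$ cannot be passed to the limit either. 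This is exactly why the paper first replaces $v$ by the mollified, truncated process $v_{R}^{\delta}$ of \eqref{def approx}, which is continuous, takes values in a bounded subset of $\mathrm{int}(\mathrm{Dom}(\varphi))\cap\mathrm{int}(\mathrm{Dom}(\psi))$ and makes $\varphi(v_{R}^{\delta}(\cdot)),\psi(v_{R}^{\delta}(\cdot))$ continuous and bounded; the removal of this regularization ($\delta\searrow0$, $R\nearrow\infty$, Step 2 of the paper's proof) is a substantive part of the argument that your proof simply does not contain. Second, you propose Fatou plus lower semicontinuity for $\mathbb{\bar{E}}\int\psi(\bar{Y}^{n}_{r})\,d\bar{A}^{n}_{r}$, but here the measures vary with $n$ as well, so l.s.c.\ plus Fatou does not apply directly; the paper's route is to approximate $\varphi,\psi$ monotonically by locally Lipschitz $\varphi_{j},\psi_{j}$, use Proposition \ref{Proposition 4_Annexes} together with part $\mathrm{(III)}$ of Theorem \ref{Theorem 1_Annexes} to get a.s.\ convergence of $\int\psi_{j}(\bar{Y}^{n}_{r})\,d\bar{A}^{n}_{r}$, then Fatou in $\omega$ and Beppo Levi in $j$. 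This $j$-approximation layer is absent from your proof.

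Concerning the brackets, the "main obstacle" you identify is indeed the crux, but your resolution is not correct as stated. The term $\tfrac12\,d[\bar{M}+\bar{K}^{1}+\bar{K}^{2}]$ on the right of \eqref{subineq} does not arise as a limit of $\tfrac12\,d[\bar{M}^{n}+\bar{K}^{1,n}+\bar{K}^{2,n}]$ (for the pre-limit processes this bracket equals $[\bar{M}^{n}]$, since $\bar{K}^{1,n},\bar{K}^{2,n}$ are continuous of bounded variation); it is produced \emph{after} the $n$-limit, by applying It\^o's formula for $|\bar{Y}_{r}|^{2}$ to the limit equation \eqref{generalized BSDE 5}, whose ingredients may jump. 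Your claim that the cross terms vanish and that "no mass is lost or gained" is precisely what would have to be proved, and it is avoided in the paper by never passing to the limit in that bracket at all. For the left-hand bracket, Proposition \ref{Proposition 2_Annexes} concerns total variation, not quadratic variation; the correct argument (used in the paper) is the $L^{2}$-martingale identity $\mathbb{\bar{E}}\big([\bar{M}^{n}]_{\tau}-[\bar{M}^{n}]_{\sigma}\big)=\mathbb{\bar{E}}\,|\bar{M}^{n}_{\tau}-\bar{M}^{n}_{\sigma}|^{2}$ combined with Fatou and the pointwise convergence of $\bar{M}^{n}$ outside $Q$.
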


\noindent\textbf{Proof.} We know (see, e.g. \cite[Proposition 6.26]{pa-ra/12})
that if $\varphi$ is a l.s.c. function such that $\varphi\left(  x\right)
\geq0,$ for all $x\in\mathbb{R}^{d}$, then there exists a sequence of locally
Lipschitz functions $\varphi_{n}:\mathbb{R}^{d}\rightarrow\mathbb{R}$ such
that%
\[%
\begin{array}
[c]{l}%
0\leq\varphi_{1}\left(  x\right)  \leq\cdots\leq\varphi_{j}\left(  x\right)
\leq\cdots\leq\varphi\left(  x\right)  \quad\text{and}\medskip\\
\lim\limits_{j\rightarrow\infty}\varphi_{j}\left(  x\right)  =\varphi\left(
x\right)
\end{array}
\]
(the same conclusion holds true for $\psi$).

Let $\sigma,\tau:\bar{\Omega}\rightarrow\left(  t_{n},T\right)  $ be two
stopping times such that $\sigma\leq\tau$, $\mathbb{\bar{P}}$--a.s.

We fix $\omega\in\bar{\Omega}$ for which (\ref{technical ineq 1}) holds and
$t_{n}<\sigma\left(  \omega\right)  <\tau\left(  \omega\right)  <T$.

Let $\left(  v\left(  t\right)  \right)  _{t\in\mathbb{R}}$ be an arbitrary
c\`{a}dl\`{a}g stochastic process such that $v\left(  s\right)  =v\left(
0\right)  $ for all $s\leq0$, $v\left(  s\right)  =v\left(  T\right)  $, for
all $s\geq T$ and $v\left(  t\right)  \in\overline{\mathrm{Dom}\left(
\varphi\right)  }\cap\overline{\mathrm{Dom}\left(  \psi\right)  }$ for all
$t\in\mathbb{R}.$

Let us define, for $\delta\in\left(  0,1\right)  $ and $R>0$%
\begin{equation}
v_{R}^{\delta}\left(  t\right)  :=\frac{1}{\delta}\int_{t}^{\infty}%
e^{-\frac{r-t}{\delta}}\left[  \frac{R}{1+R}\rho_{R}\left(  r\right)  v\left(
r\right)  +\frac{1}{1+R}u_{0}\right]  dr, \label{def approx}%
\end{equation}
where $u_{0}\in\mathrm{int}\left(  \mathrm{Dom}\left(  \varphi\right)
\right)  \cap\mathrm{int}\left(  \mathrm{Dom}\left(  \psi\right)  \right)  $
and $\rho_{R}\left(  r\right)  :=\mathbb{1}_{\left[  0,R\right]  }\left(
\left\vert v\left(  r\right)  \right\vert +\varphi\left(  v\left(  r\right)
\right)  +\psi\left(  v\left(  r\right)  \right)  \right)  $.

Then $t\mapsto v_{R}^{\delta}\left(  t\right)  $ is a continuous function.

Since $v\left(  r\right)  \rho_{R}\left(  r\right)  \in\overline
{\mathrm{Dom}\left(  \varphi\right)  }$ and%
\[
\varepsilon\overline{\mathrm{Dom}\left(  \varphi\right)  }+\left(
1-\varepsilon\right)  ~\mathrm{int}\left(  \mathrm{Dom}\left(  \varphi\right)
\right)  \subset\mathrm{int}\left(  \mathrm{Dom}\left(  \varphi\right)
\right)  ,\quad\text{for all }\varepsilon\in\lbrack0,1),
\]
the set $\big\{v_{R}^{\delta}\left(  t\right)  :t\in\left[  0,T\right]
\big\}$ is a bounded subset of $\mathrm{int}\left(  \mathrm{Dom}\left(
\varphi\right)  \right)  $ and $\big|v_{R}^{\delta}\left(  t\right)  \big|\leq
R+\left\vert u_{0}\right\vert $, for all $t\in\left[  0,T\right]  .$ Moreover
$t\mapsto\varphi\big(v_{R}^{\delta}\left(  t\right)  \big):\left[  0,T\right]
\rightarrow\lbrack0,\infty)$ is continuous and, by Jensen inequality,%
\begin{equation}
0\leq\varphi\big(v_{R}^{\delta}\left(  t\right)  \big)\leq\frac{1}{\delta}%
\int_{t}^{\infty}e^{-\frac{r-t}{\delta}}\left[  \varphi\left(  \varepsilon
\rho_{R}\left(  r\right)  v\left(  r\right)  +\left(  1-\varepsilon\right)
u_{0}\right)  \right]  dr. \label{technical ineq 2}%
\end{equation}
But%
\begin{equation}%
\begin{array}
[c]{l}%
\displaystyle\varphi\left(  \frac{R}{1+R}\rho_{R}\left(  r\right)  v\left(
r\right)  +\frac{1}{1+R}u_{0}\right)  \leq\frac{R}{1+R}\varphi\left(  \rho
_{R}\left(  r\right)  v\left(  r\right)  +\left(  1-\rho_{R}\left(  r\right)
\right)  0\right)  +\frac{1}{1+R}\varphi\left(  u_{0}\right)  \medskip\\
\displaystyle\leq\frac{R}{1+R}\left[  \rho_{R}\left(  r\right)  \varphi\left(
v\left(  r\right)  \right)  +\left(  1-\rho_{R}\left(  r\right)  \right)
\varphi\left(  0\right)  \right]  +\frac{1}{1+R}\varphi\left(  u_{0}\right)
\medskip\\
\displaystyle=\frac{R}{1+R}\rho_{R}\left(  r\right)  \varphi\left(  v\left(
r\right)  \right)  +\frac{1}{1+R}\varphi\left(  u_{0}\right)  .
\end{array}
\label{technical ineq 3}%
\end{equation}
Hence%
\[
0\leq\varphi\big(v_{R}^{\delta}\left(  t\right)  \big)\leq R+\varphi\left(
u_{0}\right)  .
\]
The same conclusions we have for $t\mapsto\psi\big(v_{R}^{\delta}\left(
t\right)  \big).$

Let $n\in\mathbb{N}^{\ast}$. Using (\ref{elements of subdiff 6}) and
Proposition \ref{equiv-subdiff}, we deduce that, for all $j\in\mathbb{N}%
^{\ast}$ any stopping times $\sigma,\tau:\bar{\Omega}\rightarrow\left(
t_{n},T\right)  ,$with $\sigma\leq\tau$, $\mathbb{\bar{P}}$--a.s. $\omega
\in\bar{\Omega}:$%
\begin{equation}%
\begin{array}
[c]{l}%
\displaystyle\int_{\sigma}^{\tau}\varphi_{j}\left(  \bar{Y}_{r}^{n}\right)
dr+\int_{\sigma}^{\tau}\psi_{j}\left(  \bar{Y}_{r}^{n}\right)  d\bar{A}%
_{r}^{n}\leq\int_{\sigma}^{\tau}\varphi\left(  \bar{Y}_{r}^{n}\right)
dr+\int_{\sigma}^{\tau}\psi\left(  \bar{Y}_{r}^{n}\right)  d\bar{A}_{r}%
^{n}\medskip\\
\displaystyle\leq\int_{\sigma}^{\tau}\langle\bar{Y}_{r}^{n}-v_{R}^{\delta
}\left(  r\right)  ,d\left(  \bar{K}_{r}^{1,n}+\bar{K}_{r}^{2,n}\right)
\rangle+\int_{\sigma}^{\tau}\varphi\left(  v_{r}\right)  dr+\int_{\sigma
}^{\tau}\psi(v_{R}^{\delta}\left(  r\right)  )d\bar{A}_{r}^{n}%
\,,\;\mathbb{\bar{P}}\text{--a.s.}.
\end{array}
\label{technical ineq 1}%
\end{equation}
The proof will be split into several steps. First we extend to $\mathbb{R}$,
by continuity, the stochastic processes from (\ref{technical ineq 1}) as
follows: $\bar{Y}_{r}^{n}=\bar{Y}_{t_{n}}^{n}$, $\bar{A}_{r}^{n}=\bar{K}%
_{r}^{1,n}=\bar{K}_{r}^{2,n}=0$ for all $r\leq t_{n}$ and $\bar{Y}_{r}%
^{n}=\bar{Y}_{T}^{n}$, $\bar{A}_{r}^{n}=\bar{A}_{T}^{n}$, $\bar{K}_{r}%
^{1,n}=\bar{K}_{T}^{1,n}$, $\bar{K}_{r}^{2,n}=\bar{K}_{T}^{2,n}$ for all
$r\geq T.\medskip$

\noindent\textit{Step 1. Passing to the limit as }$n\rightarrow\infty
.\smallskip$

\noindent In the next two steps, let the c\`{a}dl\`{a}g stochastic process $v$
be such that $\mathbb{\bar{E}}\sup_{r\in\left[  0,T\right]  }\left\vert
v\left(  r\right)  \right\vert ^{2}<\infty.$

From (\ref{technical ineq 1}) we see that, for any stopping times $\sigma
,\tau:\bar{\Omega}\rightarrow\left(  t,T\right)  $ with $\sigma\leq\tau$,%
\begin{equation}%
\begin{array}
[c]{l}%
\displaystyle\mathbb{1}_{[0,\sigma)}\left(  t_{n}\right)  \Big(\int_{\sigma
}^{\tau}\varphi_{j}\left(  \bar{Y}_{r}^{n}\right)  dr+\int_{\sigma}^{\tau}%
\psi_{j}\left(  \bar{Y}_{r}^{n}\right)  d\bar{A}_{r}^{n}\Big)\medskip\\
\displaystyle\leq\mathbb{1}_{[0,\sigma)}\left(  t_{n}\right)  \Big(\int
_{\sigma}^{\tau}\varphi\left(  \bar{Y}_{r}^{n}\right)  dr+\int_{\sigma}^{\tau
}\psi\left(  \bar{Y}_{r}^{n}\right)  d\bar{A}_{r}^{n}\Big)\medskip\\
\displaystyle\leq\mathbb{1}_{[0,\sigma)}\left(  t_{n}\right)  \Big(\int
_{\sigma}^{\tau}\langle\bar{Y}_{r}^{n}-v_{R}^{\delta}\left(  r\right)
,d\left(  \bar{K}_{r}^{1,n}+\bar{K}_{r}^{2,n}\right)  \rangle\Big)\medskip\\
\displaystyle\quad+\mathbb{1}_{[0,\sigma)}\left(  t_{n}\right)  \Big(\int
_{\sigma}^{\tau}\varphi(v_{R}^{\delta}\left(  r\right)  )dr+\int_{\sigma
}^{\tau}\psi(v_{R}^{\delta}\left(  r\right)  )d\bar{A}_{r}^{n}\Big)\,.
\end{array}
\label{subdiff apartenence 3}%
\end{equation}
Obviously, $\mathbb{1}_{[0,\sigma)}\left(  t_{n}\right)  \rightarrow1$, as
$n\rightarrow\infty$ and $\mathbb{1}_{[0,\sigma)}\left(  t_{n}\right)  $ is a
$\mathcal{F}_{\sigma}$--random variable.

By It{\^{o}}'s formula applied to (\ref{generalized BSDE 4}) we have for any
two stopping times $\sigma,\tau:\bar{\Omega}\rightarrow\left(  t,T\right)  $
such that $\sigma\leq\tau$,%
\begin{equation}%
\begin{array}
[c]{l}%
\displaystyle\mathbb{\bar{E}}\Big[\mathbb{1}_{[0,\sigma)}\left(  t_{n}\right)
\int_{\sigma}^{\tau}\varphi_{j}\left(  \bar{Y}_{r}^{n}\right)  dr+\mathbb{1}%
_{[0,\sigma)}\left(  t_{n}\right)  \int_{\sigma}^{\tau}\psi_{j}\left(  \bar
{Y}_{r}^{n}\right)  d\bar{A}_{r}^{n}{\Big]}\medskip\\
\displaystyle\leq\mathbb{\bar{E}}\Big[\mathbb{1}_{[0,\sigma)}\left(
t_{n}\right)  \int_{\sigma}^{\tau}\varphi\left(  \bar{Y}_{r}^{n}\right)
dr+\mathbb{1}_{[0,\sigma)}\left(  t_{n}\right)  \int_{\sigma}^{\tau}%
\psi\left(  \bar{Y}_{r}^{n}\right)  d\bar{A}_{r}^{n}{\Big]}\medskip\\
\displaystyle\leq\mathbb{\bar{E}}\Big[{-}\mathbb{1}_{[0,\sigma)}\left(
t_{n}\right)  \int_{\sigma}^{\tau}\langle v_{R}^{\delta}\left(  r\right)
,d\bar{K}_{r}^{1,n}+d\bar{K}_{r}^{2,n}\rangle{\Big]}\medskip\\
\quad\displaystyle+\mathbb{\bar{E}}\Big[\mathbb{1}_{[0,\sigma)}\left(
t_{n}\right)  \int_{\sigma}^{\tau}\varphi(v_{R}^{\delta}\left(  r\right)
)dr+\mathbb{1}_{[0,\sigma)}\left(  t_{n}\right)  \int_{\sigma}^{\tau}%
\psi(v_{R}^{\delta}\left(  r\right)  )d\bar{A}_{r}^{n}{\Big]}\medskip\\
\quad\displaystyle+\frac{1}{2}\mathbb{\bar{E}}\big[\mathbb{1}_{[0,\sigma
)}\left(  t_{n}\right)  {\big(|\bar{Y}_{\tau}^{n}|^{2}-|\bar{Y}_{\sigma}%
^{n}|^{2}\big)}\big]+\mathbb{\bar{E}}\Big[\mathbb{1}_{[0,\sigma)}\left(
t_{n}\right)  \int_{\sigma}^{\tau}\langle\bar{Y}_{r}^{n},f{(r,{\bar{X}_{r}%
^{n},\bar{Y}{_{r}^{n}})\rangle dr}\Big]}\medskip\\
\quad\displaystyle+\mathbb{\bar{E}}\Big[\mathbb{1}_{[0,\sigma)}\left(
t_{n}\right)  {\int_{\sigma}^{\tau}\langle\bar{Y}_{r}^{n},g{(r,{\bar{X}%
_{r}^{n},\bar{Y}{_{r}^{n}})\rangle d{\bar{A}}}_{r}^{n}}\Big]{-}\frac{1}%
{2}\mathbb{\bar{E}}}\big[\mathbb{1}_{[0,\sigma)}{\left(  t_{n}\right)
\big({[{{\bar{M}}^{n}}]_{\tau}-{[{{\bar{M}}^{n}}]_{\sigma}}}\big)}\big]\,.
\end{array}
\label{technical ineq 7}%
\end{equation}
Now,%
\[%
\begin{array}
[c]{l}%
\displaystyle\liminf\limits_{n\rightarrow\infty}\mathbb{\bar{E}}%
\Big[\mathbb{1}_{[0,\sigma)}\left(  t_{n}\right)  \Big(\int_{\sigma}^{\tau
}\varphi_{j}\left(  \bar{Y}_{r}^{n}\right)  dr+\int_{\sigma}^{\tau}\psi
_{j}\left(  \bar{Y}_{r}^{n}\right)  d\bar{A}_{r}^{n}+\frac{1}{2}%
\big({{[{{\bar{M}}^{n}}]_{\tau}-{[{{\bar{M}}^{n}}]_{\sigma}}}\big)\Big)\Big]}%
\medskip\\
\displaystyle\geq\liminf\limits_{n\rightarrow\infty}\mathbb{\bar{E}%
}\Big[\mathbb{1}_{[0,\sigma)}\left(  t_{n}\right)  \int_{\sigma}^{\tau}%
\varphi_{j}\left(  \bar{Y}_{r}^{n}\right)  dr{\Big]}+\liminf
\limits_{n\rightarrow\infty}\mathbb{\bar{E}}\Big[\mathbb{1}_{[0,\sigma
)}\left(  t_{n}\right)  \int_{\sigma}^{\tau}\psi_{j}\left(  \bar{Y}_{r}%
^{n}\right)  dA_{r}^{n}{\Big]}\medskip\\
\quad\displaystyle{+}\frac{1}{2}\liminf\limits_{n\rightarrow\infty
}\mathbb{\bar{E}}\Big[\mathbb{1}_{[0,\sigma)}\left(  t_{n}\right)
{({[{{\bar{M}}^{n}}]_{\tau}-{[{{\bar{M}}^{n}}]_{\sigma}})}\Big]}.
\end{array}
\]

\begin{itemize}
\item By Fatou's lemma we have%
\[
\liminf\limits_{n\rightarrow\infty}\mathbb{\bar{E}}\Big(\mathbb{1}%
_{[0,\sigma)}\left(  t_{n}\right)  \int_{\sigma}^{\tau}\varphi_{j}\left(
\bar{Y}_{r}^{n}\right)  dr\Big)\geq\mathbb{\bar{E}}\Big(\int_{\sigma}^{\tau
}\varphi_{j}\left(  \bar{Y}_{r}\right)  dr\Big).
\]

\item From part $\mathrm{(III)}$ of Theorem \ref{Theorem 1_Annexes} and
Proposition \ref{Proposition 4_Annexes} from the Annexes and using
(\ref{technical ineq 10}), we have for any stopping times $\sigma,\tau
:\bar{\Omega}\rightarrow\left[  0,T\right]  \setminus Q$, $\sigma\leq\tau$,
(where $Q$ is the countable subset of $\left(  0,T\right)  $ defined in
(\ref{technical ineq 10})), that
\[
\lim_{n\rightarrow\infty}\Big(\mathbb{1}_{[0,\sigma)}\left(  t_{n}\right)
\int_{\sigma}^{\tau}\psi_{j}\left(  \bar{Y}_{r}^{n}\right)  d\bar{A}_{r}%
^{n}\Big)=\int_{\sigma}^{\tau}\psi_{j}\left(  \bar{Y}_{r}\right)  d\bar{A}%
_{r}^{t,x}\,,\quad\mathbb{\bar{P}}\text{--a.s.}%
\]
Hence, using Fatou's lemma,%
\begin{align*}
\liminf\limits_{n\rightarrow\infty}\mathbb{\bar{E}}\Big(\mathbb{1}%
_{[0,\sigma)}\left(  t_{n}\right)  \int_{\sigma}^{\tau}\psi_{j}\left(  \bar
{Y}_{r}^{n}\right)  d\bar{A}_{r}^{n}\Big)  &  \geq\mathbb{\bar{E}}%
\liminf\limits_{n\rightarrow\infty}\Big(\mathbb{1}_{[0,\sigma)}\left(
t_{n}\right)  \int_{\sigma}^{\tau}\psi_{j}\left(  \bar{Y}_{r}^{n}\right)
d\bar{A}_{r}^{n}\Big)\\
&  =\mathbb{\bar{E}}\int_{\sigma}^{\tau}\psi_{j}\left(  \bar{Y}_{r}\right)
d\bar{A}_{r}^{t,x}\,.
\end{align*}

\item Using the identity%
\[%
\begin{array}
[c]{l}%
\displaystyle\mathbb{\bar{E}}\big[\mathbb{1}_{[0,\sigma)}{\left(
t_{n}\right)  |{{{\bar{M}}_{\tau}^{n}}-{{{{\bar{M}}_{\sigma}^{n}|}}}^{2}%
}\big]}{=}\mathbb{\bar{E}}\big[\mathbb{1}_{[0,\sigma)}{\left(  t_{n}\right)
\big(|{{{\bar{M}}_{\tau}^{n}}|}}^{2}{{-|{{{{{{\bar{M}}_{\sigma}^{n}}}|}}}%
^{2}-2{{{\bar{M}}_{\sigma}^{n}}}}}\left(  {{{{{{{{{\bar{M}}_{\tau}^{n}-}}}%
\bar{M}}_{\sigma}^{n}}}}}\right)  \big){\big]}\medskip\\
\displaystyle=\mathbb{\bar{E}}\big[\mathbb{1}_{[0,\sigma)}{\left(
t_{n}\right)  }\big({|{{{\bar{M}}_{\tau}^{n}}|}}^{2}{{-|{{{{{{\bar{M}}%
_{\sigma}^{n}}}|}}}^{2}}\big)\big]={\mathbb{\bar{E}}}\big[\mathbb{1}%
_{[0,\sigma)}{\left(  t_{n}\right)  \big({[{{\bar{M}}^{n}}]_{\tau}-{[{{\bar
{M}}^{n}}]_{\sigma}}}\big)}\big],}%
\end{array}
\]
and from (\ref{technical ineq 10}) we deduce that%
\[%
\begin{array}
[c]{l}%
\displaystyle\liminf\limits_{n\rightarrow\infty}{\mathbb{\bar{E}}%
}\big[\mathbb{1}_{[0,\sigma)}{\left(  t_{n}\right)  \big({[{{\bar{M}}^{n}%
}]_{\tau}-{[{{\bar{M}}^{n}}]_{\sigma}}}\big)}\big]=\liminf
\limits_{n\rightarrow\infty}\mathbb{\bar{E}}\big(\mathbb{1}_{[0,\sigma
)}{\left(  t_{n}\right)  |{{{\bar{M}}_{\tau}^{n}}-{{{{\bar{M}}_{\sigma}^{n}|}%
}}^{2}}\big)}\medskip\\
\displaystyle\geq\mathbb{\bar{E}}{(|{{{\bar{M}}_{\tau}}-{{{{\bar{M}}_{\sigma
}|}}}^{2})}}={{\mathbb{\bar{E}}}(|{{{\bar{M}}_{\tau}|}}}^{2}{{-|{{{{\bar{M}%
}_{\sigma}|}}}^{2})}}={{\mathbb{\bar{E}}{({[{{\bar{M}}}]_{\tau}-{[{{\bar{M}}%
}]_{\sigma}}),}}}}%
\end{array}
\]
for any stopping times $\sigma,\tau:\bar{\Omega}\rightarrow\left[  0,T\right]
\setminus Q$ with $\sigma\leq\tau.$

\item Using again (\ref{technical ineq 10}) and part $\mathrm{(I)}$ of Theorem
\ref{Theorem 1_Annexes} from the Annexes, we deduce that%
\[
\lim_{n\rightarrow\infty}\Big(\mathbb{1}_{[0,\sigma)}\left(  t_{n}\right)
\int_{\sigma}^{\tau}\langle v_{R}^{\delta}\left(  r\right)  ,d\bar{K}%
_{r}^{1,n}+d\bar{K}_{r}^{2,n}\rangle\Big)=\int_{\sigma}^{\tau}\langle
v_{R}^{\delta}\left(  r\right)  ,d\bar{K}_{r}^{1}+d\bar{K}_{r}^{2}\rangle,
\]
for any stopping times $\sigma,\tau:\bar{\Omega}\rightarrow\left[  0,T\right]
\setminus Q$ with $\sigma\leq\tau.$

But, for all $1<p<2,$%
\[
\sup_{n\in\mathbb{N}^{\ast}}\mathbb{\bar{E}}\Big|\mathbb{1}_{[0,\sigma
)}\left(  t_{n}\right)  \int_{\sigma}^{\tau}\langle v_{R}^{\delta}\left(
r\right)  ,d\bar{K}_{r}^{1,n}\rangle\Big|^{p}\leq C\left(  R+\varphi\left(
u_{0}\right)  \right)  ^{p}\sup_{n\in\mathbb{N}^{\ast}}\Big[\mathbb{E}%
\int_{\sigma}^{\tau}|U_{r}^{n}|^{2}dr\Big]^{p/2}<\infty
\]
and%
\[%
\begin{array}
[c]{l}%
\displaystyle\sup_{n\in\mathbb{N}^{\ast}}\mathbb{\bar{E}}\Big|\mathbb{1}%
_{[0,\sigma)}\left(  t_{n}\right)  \int_{\sigma}^{\tau}\langle v_{R}^{\delta
}\left(  r\right)  ,d\bar{K}_{r}^{2,n}\rangle\Big|^{p}\\
\displaystyle\leq\left(  R+\psi\left(  u_{0}\right)  \right)  ^{p}\sup
_{n\in\mathbb{N}^{\ast}}\mathbb{E}\Big[\left(  A_{t}^{n}\right)
^{p/2}\big(\int_{\sigma}^{\tau}|V_{r}^{n}|^{2}dA_{r}^{n}\big)^{p/2}%
\Big]\medskip\\
\displaystyle\leq\left(  R+\psi\left(  u_{0}\right)  \right)  ^{p}\,\sup
_{n\in\mathbb{N}^{\ast}}\Big[\mathbb{E}\left(  A_{t}^{n}\right)
^{p/(2-p)}\Big]^{\left(  2-p\right)  /2}\,\sup_{n\in\mathbb{N}^{\ast}%
}\Big[\mathbb{E}\int_{\sigma}^{\tau}|V_{r}^{n}|^{2}dA_{r}^{n}\Big]^{p/2}%
<\infty.
\end{array}
\]
Hence, by the uniform integrability property (see, e.g. \cite[Proposition
1.23]{pa-ra/12}), we deduce%
\[
\lim_{n\rightarrow\infty}\mathbb{\bar{E}}\Big(\mathbb{1}_{[0,\sigma)}\left(
t_{n}\right)  \int_{\sigma}^{\tau}\langle v_{R}^{\delta}\left(  r\right)
,d\bar{K}_{r}^{1,n}+d\bar{K}_{r}^{2,n}\rangle\Big)=\mathbb{\bar{E}}%
\int_{\sigma}^{\tau}\langle v_{R}^{\delta}\left(  r\right)  ,d\bar{K}_{r}%
^{1}+d\bar{K}_{r}^{2}\rangle,
\]
for any stopping times $\sigma,\tau:\bar{\Omega}\rightarrow\left[  0,T\right]
\setminus Q$ with $\sigma\leq\tau.$

\item Since $r\mapsto\varphi(v_{R}^{\delta}\left(  r\right)  )$ and
$r\mapsto\psi(v_{R}^{\delta}\left(  r\right)  )$ are continuous functions on
$\left[  0,T\right]  $,%
\begin{align*}
&  \lim_{n\rightarrow\infty}\mathbb{\bar{E}}\Big[\mathbb{1}_{[0,\sigma
)}\left(  t_{n}\right)  \int_{\sigma}^{\tau}\varphi(v_{R}^{\delta}\left(
r\right)  )dr+\mathbb{1}_{[0,\sigma)}\left(  t_{n}\right)  \int_{\sigma}%
^{\tau}\psi(v_{R}^{\delta}\left(  r\right)  )d\bar{A}_{r}^{n}{\Big]}\\
&  =\mathbb{\bar{E}}\Big[\mathbb{1}_{[0,\sigma)}\left(  t_{n}\right)
\int_{\sigma}^{\tau}\varphi(v_{R}^{\delta}\left(  r\right)  )dr+\mathbb{1}%
_{[0,\sigma)}\left(  t_{n}\right)  \int_{\sigma}^{\tau}\psi(v_{R}^{\delta
}\left(  r\right)  )d\bar{A}_{r}{\Big]}%
\end{align*}
for any stopping times $\sigma,\tau:\bar{\Omega}\rightarrow\left[  0,T\right]
$ with $\sigma\leq\tau.$

\item From (\ref{technical ineq 10}) and the uniform square integrability of
${\bar{Y}^{n}}$ (the estimate (\ref{mg Y 2}) also holds ${\bar{Y}}^{n}$ with
$p>2$ since ${Y}^{n}{\sim\bar{Y}^{n}}$) we deduce that%
\[
\lim_{n\rightarrow\infty}\mathbb{\bar{E}}{\big[\mathbb{1}_{[0,\sigma)}\left(
t_{n}\right)  \big(|\bar{Y}_{\tau}^{n}|^{2}-|\bar{Y}_{\sigma}^{n}%
|^{2}\big)\big]=\mathbb{\bar{E}}\big[|\bar{Y}_{\tau}|^{2}-|\bar{Y}_{\sigma
}|^{2}\big]},
\]
for any stopping times $\sigma,\tau:\bar{\Omega}\rightarrow\left[  0,T\right]
\setminus Q$ with $\sigma\leq\tau.$

\item Next, by (\ref{technical ineq 10}), we have, for all $r\in\left[
0,T\right]  \setminus Q$
\[
\bar{F}_{r}^{n}:=\mathbb{1}_{[0,\sigma)}\left(  t_{n}\right)  \langle\bar
{Y}_{r}^{n},f{(r,{\bar{X}_{r}^{n},\bar{Y}{_{r}^{n}})\rangle}}\rightarrow
\langle\bar{Y}_{r},f{(r,{\bar{X}{_{r}^{t,x}},\bar{Y}{_{r}})}}\rangle
\text{,}\quad\text{as }n\rightarrow\infty,
\]
and by (\ref{mg Y 2}) and ${Y}^{n}{\sim\bar{Y}^{n}:}$
\[
\sup_{n\in\mathbb{N}^{\ast}}\mathbb{\bar{E}}\int_{\sigma}^{\tau}\left(
\bar{F}_{r}^{n}\right)  ^{2}dr\leq2\gamma^{2}{\,\sup_{n\in\mathbb{N}^{\ast}%
}\mathbb{\bar{E}}}\Big[\sup_{r\in\left[  0,T\right]  }\left\vert \bar{Y}%
_{r}^{n}\right\vert ^{2}+\sup_{r\in\left[  0,T\right]  }\left\vert \bar{Y}%
_{r}^{n}\right\vert ^{4}{\Big]}<\infty.
\]
Hence $\left(  \bar{F}^{n}\right)  _{n\in\mathbb{N}^{\ast}}$ is uniformly
integrable on $\bar{\Omega}\times\left[  0,T\right]  $ and
\[
\lim_{n\rightarrow\infty}\mathbb{\bar{E}}\Big[\int_{\sigma}^{\tau}%
\mathbb{1}_{[0,\sigma)}\left(  t_{n}\right)  \langle\bar{Y}_{r}^{n}%
,f{(r,{\bar{X}_{r}^{n},\bar{Y}{_{r}^{n}})\rangle dr}\Big]}={\mathbb{\bar{E}}%
}\int_{\sigma}^{\tau}\langle\bar{Y}_{r},f{(r,{\bar{X}{_{r}^{t,x}},\bar{Y}%
{_{r}})\rangle dr,}}%
\]
for any stopping times $\sigma,\tau:\bar{\Omega}\rightarrow\left[  0,T\right]
$ with $\sigma\leq\tau.$

\item Since $\left(  r,x,y\right)  {\mapsto y\cdot g}\left(  r,x,y\right)  $
is locally Lipschitz, by Proposition \ref{Proposition 4_Annexes} and part
$\mathrm{(III)}$ of Theorem \ref{Theorem 1_Annexes} we get for any stopping
times $\sigma,\tau:\bar{\Omega}\rightarrow\left[  0,T\right]  \setminus Q$
with $\sigma\leq\tau:$%
\begin{equation}
\bar{\xi}_{n}:=\mathbb{1}_{[0,\sigma)}\left(  t_{n}\right)  \int_{\sigma
}^{\tau}\langle\bar{Y}_{r}^{n},g{(r,{\bar{X}_{r}^{n},\bar{Y}{_{r}^{n}})\rangle
d\bar{A}}}_{r}^{n}\rightarrow\int_{\sigma}^{\tau}\langle\bar{Y}_{r}%
,g{(r,{\bar{X}_{r},\bar{Y}{_{r}})d\bar{A}}}_{r}\rangle~,\quad\text{as
}n\rightarrow\infty. \label{Lipschitz for g}%
\end{equation}
By (\ref{cont X}--$b$) and (\ref{mg Y 2}),%
\[
\sup_{n\in\mathbb{N}^{\ast}}\mathbb{\bar{E}}\left(  \bar{\xi}_{n}^{2}\right)
\leq2\gamma^{2}{\,\sup_{n\in\mathbb{N}^{\ast}}\mathbb{\bar{E}}}\Big[\big(\sup
_{r\in\left[  0,T\right]  }\left\vert \bar{Y}_{r}^{n}\right\vert ^{2}%
+\sup_{r\in\left[  0,T\right]  }\left\vert \bar{Y}_{r}^{n}\right\vert
^{4}\big)|{{\bar{A}}}_{T}^{n}|^{2}{\Big]}\leq C<\infty.
\]
It follows, using again the uniformly integrability criterion to pass to the
limit under the integrals, that%
\[
\lim_{n\rightarrow\infty}\mathbb{\bar{E}}\Big[\mathbb{1}_{[0,\sigma)}\left(
t_{n}\right)  \int_{\sigma}^{\tau}\langle\bar{Y}_{r}^{n},g{(r,{\bar{X}_{r}%
^{n},\bar{Y}{_{r}^{n}})\rangle d\bar{A}}}_{r}^{n}{\Big]=\mathbb{\bar{E}}}%
\int_{\sigma}^{\tau}\langle\bar{Y}_{r},g{(r,{\bar{X}_{r}^{t,x},\bar{Y}{_{r}%
})\rangle d\bar{A}}}_{r}^{t,x}\,{,}%
\]
for any stopping times $\sigma,\tau:\bar{\Omega}\rightarrow\left[  0,T\right]
\setminus Q$ with $\sigma\leq\tau.$
\end{itemize}

Passing to the $\liminf\limits_{n\rightarrow\infty}$ in
(\ref{technical ineq 7}) and summarizing, from here above, the convergences of
the all terms from (\ref{technical ineq 7}) we infer that%
\[%
\begin{array}
[c]{l}%
\displaystyle\mathbb{\bar{E}}{\Big[}\int_{\sigma}^{\tau}\varphi_{j}\left(
\bar{Y}_{r}\right)  dr+\int_{\sigma}^{\tau}\psi_{j}\left(  \bar{Y}_{r}\right)
d\bar{A}_{r}^{t,x}+\frac{1}{2}{{{({{{[{{\bar{M}}}]_{\tau}-{[{{\bar{M}}%
}]_{\sigma}}}})}}}\Big]}\medskip\\
\displaystyle\leq\mathbb{\bar{E}}{\Big[-}\int_{\sigma}^{\tau}\langle
v_{R}^{\delta}\left(  r\right)  ,d\bar{K}_{r}^{1}+d\bar{K}_{r}^{2}\rangle
+\int_{\sigma}^{\tau}\varphi(v_{R}^{\delta}\left(  r\right)  )dr+\int_{\sigma
}^{\tau}\psi(v_{R}^{\delta}\left(  r\right)  )d\bar{A}_{r}^{t,x}{\Big]}%
+\frac{1}{2}\mathbb{\bar{E}}{\big[|\bar{Y}_{\tau}|^{2}-|\bar{Y}_{\sigma}%
|^{2}\big]}\medskip\\
\quad\displaystyle+\mathbb{\bar{E}}{\Big[}\int_{\sigma}^{\tau}{\langle}\bar
{Y}_{r},f{(r,{\bar{X}{_{r}^{t,x}},\bar{Y}{_{r}})\rangle dr}+\int_{\sigma
}^{\tau}\langle\bar{Y}_{r},g{(r,{\bar{X}_{r}^{t,x},\bar{Y}{_{r}})\rangle d}%
}\bar{A}_{r}^{t,x}\Big],}%
\end{array}
\]
for any stopping times $\sigma,\tau:\bar{\Omega}\rightarrow\left(  t,T\right)
\setminus Q$ with $\sigma\leq\tau.$

Taking into account the right continuity of the above integrals (we use also
Proposition \ref{Proposition 3_Annexes} from the Annexes), we see that, in
fact, this inequality takes place for any stopping times $\sigma,\tau
:\bar{\Omega}\rightarrow\left(  t,T\right)  $ with $\sigma\leq\tau.$

After that, using Beppo Levi monotone convergence theorem as $j\nearrow\infty$
and It\^{o}'s formula for ${|\bar{Y}_{r}|^{2}}$ on $\left[  \sigma
,\tau\right]  $, we deduce that%
\begin{equation}%
\begin{array}
[c]{l}%
\displaystyle\mathbb{\bar{E}}{\Big[}\int_{\sigma}^{\tau}\varphi\left(  \bar
{Y}_{r}\right)  dr+\int_{\sigma}^{\tau}\psi\left(  \bar{Y}_{r}\right)
d\bar{A}_{r}^{t,x}+\frac{1}{2}{{{({{{{{[{{\bar{M}}}]_{\tau}-{[{{\bar{M}}%
}]_{\sigma}}}}}})}}}\Big]}\medskip\\
\leq\displaystyle\mathbb{\bar{E}}{\Big[}\int_{\sigma}^{\tau}\langle\bar
{Y}_{r-}-v_{R}^{\delta}\left(  r\right)  ,d\left(  \bar{K}_{r}^{1}+\bar{K}%
_{r}^{2}\right)  \rangle+\int_{\sigma}^{\tau}\varphi(v_{R}^{\delta}\left(
r\right)  )dr+\int_{\sigma}^{\tau}\psi(v_{R}^{\delta}\left(  r\right)
)d\bar{A}_{r}^{t,x}{\Big]}\medskip\\
\displaystyle\quad+\frac{1}{2}\mathbb{\bar{E}}\int_{\sigma}^{\tau}d[{\bar{M}%
}+{\bar{K}}^{1}{+\bar{K}}^{2}]_{r}\,{.}%
\end{array}
\label{subdiff apartenence 6}%
\end{equation}
\noindent\textit{Step 2. Passing to the limit as }$\delta\searrow0$\textit{
and }$R\nearrow\infty.$

Now, using inequalities (\ref{technical ineq 2}--\ref{technical ineq 3}) (and
similarly for $\psi$), we have from the last inequality
(\ref{subdiff apartenence 6})%
\[%
\begin{array}
[c]{l}%
\displaystyle\mathbb{\bar{E}}\int_{\sigma}^{\tau}\varphi\left(  \bar{Y}%
_{r}\right)  dr+\mathbb{\bar{E}}\int_{\sigma}^{\tau}\psi\left(  \bar{Y}%
_{r}\right)  d\bar{A}_{r}^{t,x}+\frac{1}{2}\mathbb{\bar{E}}\int_{\sigma}%
^{\tau}{{{d{{{{{[{{\bar{M}}}]_{r}}}}}}}}}\medskip\\
\displaystyle\leq\mathbb{\bar{E}}\int_{\sigma}^{\tau}\langle\bar{Y}_{r-}%
-v_{R}^{\delta}\left(  r\right)  ,d\left(  \bar{K}_{r}^{1}+\bar{K}_{r}%
^{2}\right)  \rangle+\frac{1}{2}\mathbb{\bar{E}}\int_{\sigma}^{\tau}d[{\bar
{M}}+{\bar{K}}^{1}{+\bar{K}}^{2}]_{r}\medskip\\
\displaystyle\quad+\mathbb{\bar{E}}\int_{\sigma}^{\tau}\Big(\frac{1}{\delta
}\int_{r}^{\infty}e^{-\frac{s-r}{\delta}}\Big[\frac{R}{1+R}\rho_{R}\left(
s\right)  \varphi\left(  v\left(  s\right)  \right)  +\frac{1}{1+R}%
\varphi\left(  u_{0}\right)  \Big]ds\Big)dr\medskip\\
\displaystyle\quad+\mathbb{\bar{E}}\int_{\sigma}^{\tau}\Big(\frac{1}{\delta
}\int_{r}^{\infty}e^{-\frac{s-r}{\delta}}\Big[\frac{R}{1+R}\rho_{R}\left(
s\right)  \psi\left(  v\left(  s\right)  \right)  +\frac{1}{1+R}\psi\left(
u_{0}\right)  \Big]ds\Big)d\bar{A}_{r}^{t,x}\,.
\end{array}
\]
Passing to the limit for $\delta\rightarrow0$, we obtain, by the Lebesgue
dominated convergence theorem, that%
\[%
\begin{array}
[c]{l}%
\displaystyle\mathbb{\bar{E}}\int_{\sigma}^{\tau}\varphi\left(  \bar{Y}%
_{r}\right)  dr+\mathbb{\bar{E}}\int_{\sigma}^{\tau}\psi\left(  \bar{Y}%
_{r}\right)  d\bar{A}_{r}^{t,x}+\frac{1}{2}\mathbb{\bar{E}}\int_{\sigma}%
^{\tau}{{{d{{{{{[{{\bar{M}}}]_{r}}}}}}}}}\medskip\\
\displaystyle\leq\mathbb{\bar{E}}\int_{\sigma}^{\tau}\langle\bar{Y}_{r-}%
-\frac{R}{1+R}\rho_{R}\left(  r\right)  v\left(  r\right)  -\frac{1}{1+R}%
u_{0},d\left(  \bar{K}_{r}^{1}+\bar{K}_{r}^{2}\right)  \rangle\medskip\\
\displaystyle\quad+\mathbb{\bar{E}}\int_{\sigma}^{\tau}\frac{R}{1+R}\rho
_{R}\left(  r\right)  \varphi\left(  v\left(  r\right)  \right)  dr+\frac
{1}{1+R}\varphi\left(  u_{0}\right)  T\medskip\\
\displaystyle\quad+\mathbb{\bar{E}}\int_{\sigma}^{\tau}\frac{R}{1+R}\rho
_{R}\left(  r\right)  \psi\left(  v\left(  r\right)  \right)  d\bar{A}%
_{r}^{t,x}+\frac{1}{1+R}\psi\left(  u_{0}\right)  \mathbb{\bar{E}}\bar{A}%
_{T}^{t,x}+\frac{1}{2}\mathbb{\bar{E}}\int_{\sigma}^{\tau}d[{\bar{M}}+{\bar
{K}}^{1}{+\bar{K}}^{2}]_{r}\,.
\end{array}
\]
We pass now to the limit as $R\nearrow\infty$. Using, in the second member of
this inequality, the Lebesgue theorem (for the first integral) and Beppo Levi
theorem (for the next two integrals), it follows that for any c\`{a}dl\`{a}g
process $v$ such that $\mathbb{\bar{E}}\sup_{r\in\left[  0,T\right]
}\left\vert v\left(  r\right)  \right\vert ^{2}<\infty$ and any stopping times
$\sigma,\tau:\bar{\Omega}\rightarrow\left(  t,T\right)  $ with $\sigma\leq
\tau,$%
\begin{equation}%
\begin{array}
[c]{l}%
\displaystyle\mathbb{\bar{E}}\int_{\sigma}^{\tau}\varphi\left(  \bar{Y}%
_{r}\right)  dr+\mathbb{\bar{E}}\int_{\sigma}^{\tau}\psi\left(  \bar{Y}%
_{r}\right)  d\bar{A}_{r}^{t,x}+\frac{1}{2}\mathbb{\bar{E}}\int_{\sigma}%
^{\tau}{{{d{{{{{[{{\bar{M}}}]_{r}}}}}}}}}\medskip\\
\displaystyle\leq\mathbb{\bar{E}}\int_{\sigma}^{\tau}\langle\bar{Y}%
_{r-}-v\left(  r\right)  ,d\left(  \bar{K}_{r}^{1}+\bar{K}_{r}^{2}\right)
\rangle+\frac{1}{2}\mathbb{\bar{E}}\int_{\sigma}^{\tau}d[{\bar{M}}+{\bar{K}%
}^{1}{+\bar{K}}^{2}]_{r}\medskip\\
\displaystyle\quad+\mathbb{\bar{E}}\int_{\sigma}^{\tau}\varphi\left(  v\left(
r\right)  \right)  dr+\mathbb{\bar{E}}\int_{\sigma}^{\tau}\psi\left(  v\left(
r\right)  \right)  d\bar{A}_{r}^{t,x},
\end{array}
\label{subdiff apartenence 8}%
\end{equation}
which represents the conclusion of Lemma \ref{technical lemma 3}.\hfill

\begin{lemma}
[c\`{a}dl\`{a}g monotonicity property]\label{monoton}The following inequality
holds true for any $\sigma,\tau$ stopping times $\sigma,\tau:\bar{\Omega
}\rightarrow\lbrack0,\infty)$, such that $\sigma\leq\tau$,%
\begin{equation}%
\begin{array}
[c]{l}%
\displaystyle\mathbb{\bar{E}}{\int_{\sigma}^{\tau}}\langle\bar{Y}_{r-}-\bar
{Y}_{r}^{t,x},d\bar{K}_{r}^{1}-d\bar{K}_{r}^{1,t,x}+d\bar{K}_{r}^{2}-d\bar
{K}_{r}^{2,t,x}\rangle\medskip\\
\displaystyle\quad+\frac{1}{2}\mathbb{\bar{E}}{\int_{\sigma}^{\tau}}d[{\bar
{M}}+{\bar{K}}^{1}{+\bar{K}}^{2}{-\bar{M}^{t,x}}-\bar{K}^{1,t,x}-\bar
{K}^{2,t,x}]_{r}\geq\frac{1}{2}\mathbb{\bar{E}}{\int_{\sigma}^{\tau}}%
d[{\bar{M}-\bar{M}^{t,x}}]_{r}\,.
\end{array}
\label{subdiff ineq}%
\end{equation}

\end{lemma}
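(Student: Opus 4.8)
\noindent The plan is to obtain (\ref{subdiff ineq}) by confronting the c\`{a}dl\`{a}g subdifferential inequality (\ref{subineq}) of Lemma \ref{technical lemma 3}, used with the test process $v=\bar{Y}^{t,x}$, with the genuine subdifferential inclusions (\ref{subdiff apartenence 1}) satisfied by the honest solution $(\bar{Y}^{t,x},\bar{K}^{1,t,x},\bar{K}^{2,t,x},\bar{M}^{t,x})$ of the BSDE on $(\bar{\Omega},\mathcal{\bar{F}},\mathbb{\bar{P}})$, and then reorganising the quadratic--variation terms. Set $V:=\bar{K}^{1}+\bar{K}^{2}-\bar{K}^{1,t,x}-\bar{K}^{2,t,x}$, so that the bracket on the left of (\ref{subdiff ineq}) is $[(\bar{M}-\bar{M}^{t,x})+V]$. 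As a preliminary reduction, note that by the freezing conventions (\ref{extension of sol 1}) and (\ref{extension of sol 2}) all processes involved are constant on $[0,t]$ and on $[T,\infty)$, so every integrand in (\ref{subdiff ineq}) charges only $(t,T)$; after replacing $\sigma,\tau$ by $(\sigma\wedge T)\vee t$ and $(\tau\wedge T)\vee t$ and then approximating from inside by $\sigma_{\varepsilon}:=(\sigma\vee(t+\varepsilon))\wedge(T-\varepsilon)$, $\tau_{\varepsilon}:=(\tau\wedge(T-\varepsilon))\vee(t+\varepsilon)$ --- using right continuity of the semimartingale integrals in the endpoints and the uniform bounds (\ref{mg Y 2}) to pass the expectations to the limit --- it suffices to prove (\ref{subdiff ineq}) for stopping times valued in $(t,T)$, which is where Lemma \ref{technical lemma 3} is available.

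\medskip
\noindent The process $\bar{Y}^{t,x}$ is continuous and, by the a priori estimates, satisfies $\mathbb{\bar{E}}\sup_{r\in[0,T]}|\bar{Y}_{r}^{t,x}|^{2}<\infty$; hence it is an admissible test process in (\ref{subineq}). Using it there, and separately testing the inclusions (\ref{subdiff apartenence 1}) --- equivalently, by Proposition \ref{equiv-subdiff}, the variational forms (\ref{FBSDE}$-d,e$) written for the barred data --- against the process $\bar{Y}_{r}$, which coincides with $\bar{Y}_{r-}$ both $dr$--a.e.\ and $d\bar{A}_{r}^{t,x}$--a.e.\ since $\bar{A}^{t,x}$ is continuous, one gets the two inequalities
\[
\mathbb{\bar{E}}{\int_{\sigma}^{\tau}}\langle\bar{Y}_{r}^{t,x}-\bar{Y}_{r-},d(\bar{K}_{r}^{1}+\bar{K}_{r}^{2})\rangle+\mathbb{\bar{E}}{\int_{\sigma}^{\tau}}\varphi(\bar{Y}_{r})\,dr+\mathbb{\bar{E}}{\int_{\sigma}^{\tau}}\psi(\bar{Y}_{r})\,d\bar{A}_{r}^{t,x}+\frac{1}{2}\mathbb{\bar{E}}{\int_{\sigma}^{\tau}}d[\bar{M}]_{r}\leq\frac{1}{2}\mathbb{\bar{E}}{\int_{\sigma}^{\tau}}d[\bar{M}+\bar{K}^{1}+\bar{K}^{2}]_{r}+\mathbb{\bar{E}}{\int_{\sigma}^{\tau}}\varphi(\bar{Y}_{r}^{t,x})\,dr+\mathbb{\bar{E}}{\int_{\sigma}^{\tau}}\psi(\bar{Y}_{r}^{t,x})\,d\bar{A}_{r}^{t,x}
\]
and
\[
\mathbb{\bar{E}}{\int_{\sigma}^{\tau}}\langle\bar{Y}_{r-}-\bar{Y}_{r}^{t,x},d(\bar{K}_{r}^{1,t,x}+\bar{K}_{r}^{2,t,x})\rangle+\mathbb{\bar{E}}{\int_{\sigma}^{\tau}}\varphi(\bar{Y}_{r}^{t,x})\,dr+\mathbb{\bar{E}}{\int_{\sigma}^{\tau}}\psi(\bar{Y}_{r}^{t,x})\,d\bar{A}_{r}^{t,x}\leq\mathbb{\bar{E}}{\int_{\sigma}^{\tau}}\varphi(\bar{Y}_{r})\,dr+\mathbb{\bar{E}}{\int_{\sigma}^{\tau}}\psi(\bar{Y}_{r})\,d\bar{A}_{r}^{t,x}.
\]
All the convex--function integrals appearing here are finite, which one checks by applying (\ref{subineq}) once more with a constant test process $v\equiv u_{0}\in\mathrm{int}(\mathrm{Dom}(\varphi))\cap\mathrm{int}(\mathrm{Dom}(\psi))$ together with the bounds on $\bar{M},\bar{K}^{1},\bar{K}^{2}$; hence, adding the two displays, the $\varphi$-- and $\psi$--terms cancel and there remains
\[
\mathbb{\bar{E}}{\int_{\sigma}^{\tau}}\langle\bar{Y}_{r-}-\bar{Y}_{r}^{t,x},dV_{r}\rangle\geq\frac{1}{2}\mathbb{\bar{E}}{\int_{\sigma}^{\tau}}\big(d[\bar{M}]_{r}-d[\bar{M}+\bar{K}^{1}+\bar{K}^{2}]_{r}\big).
\]

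\medskip
\noindent It then remains a quadratic--variation bookkeeping. The martingale $\bar{M}^{t,x}=\int_{0}^{\cdot}\widehat{\bar{Z}}_{r}^{t,x}\,dM_{r}^{\bar{X}^{t,x}}$ is continuous, and $\bar{K}^{1,t,x},\bar{K}^{2,t,x}$ are continuous of bounded variation (being absolutely continuous with respect to $dr$, respectively $d\bar{A}^{t,x}$, with $\bar{A}^{t,x}$ continuous); therefore the quadratic covariation of any semimartingale with $\bar{K}^{1,t,x}+\bar{K}^{2,t,x}$ vanishes, and so does the covariation of $\bar{M}^{t,x}$ with $\bar{K}^{1}+\bar{K}^{2}$. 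Expanding $[\bar{M}+\bar{K}^{1}+\bar{K}^{2}]$ and $[(\bar{M}-\bar{M}^{t,x})+V]$ by polarisation and discarding these null cross terms, both expansions carry the \emph{same} correction $2\,d[\bar{M},\bar{K}^{1}+\bar{K}^{2}]_{r}+d[\bar{K}^{1}+\bar{K}^{2}]_{r}$, so that, as signed measures and $\mathbb{\bar{P}}$--a.s.,
\[
d[\bar{M}]_{r}-d[\bar{M}+\bar{K}^{1}+\bar{K}^{2}]_{r}+d[(\bar{M}-\bar{M}^{t,x})+V]_{r}=d[\bar{M}-\bar{M}^{t,x}]_{r}.
\]
Adding $\frac{1}{2}\mathbb{\bar{E}}\int_{\sigma}^{\tau}d[(\bar{M}-\bar{M}^{t,x})+V]_{r}$ to both sides of the inequality displayed at the end of the previous paragraph and inserting this identity gives precisely (\ref{subdiff ineq}).

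\medskip
\noindent The step I expect to demand the most care is this last one: it is essential that $\bar{M}^{t,x}$ and $\bar{K}^{1,t,x},\bar{K}^{2,t,x}$ be genuinely continuous, for this is exactly what makes the cross--bracket terms involving the a priori only c\`{a}dl\`{a}g limit processes $\bar{M},\bar{K}^{1},\bar{K}^{2}$ drop out and produces an exact cancellation. The passage from $[0,\infty)$--valued to $(t,T)$--valued stopping times in the preliminary reduction is the other, purely technical, point requiring attention.
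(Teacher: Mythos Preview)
Your proof is correct and follows essentially the same approach as the paper: both combine Lemma \ref{technical lemma 3} applied with $v=\bar{Y}^{t,x}$ with the subdifferential inequality for the honest solution tested with $v'=\bar{Y}$, cancel the $\varphi$-- and $\psi$--integrals, and then invoke the continuity of $\bar{M}^{t,x},\bar{K}^{1,t,x},\bar{K}^{2,t,x}$ to obtain the bracket identity $[\bar{M}+\bar{K}^{1}+\bar{K}^{2}]-[\bar{M}]=[(\bar{M}-\bar{M}^{t,x})+V]-[\bar{M}-\bar{M}^{t,x}]$. Your presentation is somewhat more explicit about the preliminary reduction to $(t,T)$--valued stopping times and about the finiteness of the convex--function integrals, but the mathematical content is the same.
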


\noindent\textbf{Proof.} Relation (\ref{subdiff apartenence 1}) becomes
$d(\bar{K}_{r}^{1,t,x}+\bar{K}_{r}^{2,t,x})\in\partial\varphi(\bar{Y}%
_{r}^{t,x})dr+\partial\psi(\bar{Y}_{r}^{t,x})d\bar{A}_{r}^{t,x}$. Using
Proposition \ref{equiv-subdiff} it follows, in a similar manner as the proof
of (\ref{subdiff apartenence 8}), that for any c\`{a}dl\`{a}g process
$v^{\prime}$ and any stopping times $\sigma,\tau:\bar{\Omega}\rightarrow
\left(  t,T\right)  $ with $\sigma\leq\tau,$%
\begin{equation}%
\begin{array}
[c]{l}%
\displaystyle\mathbb{\bar{E}}\int_{\sigma}^{\tau}\langle v^{\prime}\left(
r\right)  -\bar{Y}_{r}^{t,x},d\left(  \bar{K}_{r}^{1,t,x}+\bar{K}_{r}%
^{2,t,x}\right)  \rangle+\mathbb{\bar{E}}\int_{\sigma}^{\tau}\varphi\left(
\bar{Y}_{r}^{t,x}\right)  dr+\mathbb{\bar{E}}\int_{\sigma}^{\tau}\psi\left(
\bar{Y}_{r}^{t,x}\right)  d\bar{A}_{r}^{t,x}\medskip\\
\displaystyle\leq\mathbb{\bar{E}}\int_{\sigma}^{\tau}\varphi\left(  v^{\prime
}\left(  r\right)  \right)  dr+\mathbb{\bar{E}}\int_{\sigma}^{\tau}\psi\left(
v^{\prime}\left(  r\right)  \right)  d\bar{A}_{r}^{t,x}%
\end{array}
\label{subdiff apartenence 5}%
\end{equation}
(in this case, by the continuity of $\bar{K}^{1,t,x}$ and $\bar{K}^{2,t,x}$,
the quadratic variation and quadratic covariation are zero).

Taking $v=\bar{Y}^{t,x}$ in inequality (\ref{subdiff apartenence 8}) and
$v^{\prime}=\bar{Y}$ in (\ref{subdiff apartenence 5}) we deduce%
\begin{equation}%
\begin{array}
[c]{l}%
\displaystyle\mathbb{\bar{E}}\int_{\sigma}^{\tau}\langle\bar{Y}_{r-}-\bar
{Y}_{r}^{t,x},d\bar{K}_{r}^{1}+d\bar{K}_{r}^{2}-d\bar{K}_{r}^{1,t,x}-d\bar
{K}_{r}^{2,t,x}\rangle\medskip\\
\displaystyle\quad+\frac{1}{2}\mathbb{\bar{E}}\int_{\sigma}^{\tau}d[{\bar{M}%
}+{\bar{K}}^{1}{+\bar{K}}^{2}]_{r}\geq\frac{1}{2}\mathbb{\bar{E}}\int_{\sigma
}^{\tau}d[{\bar{M}}]_{r}\,.
\end{array}
\label{subdiff apartenence 9}%
\end{equation}
Of course, using extension (\ref{extension 1}), we have%
\[
d[{\bar{M}}+{\bar{K}}^{1}{+\bar{K}}^{2}]_{r}=0=d[{\bar{M}}]_{r}\,,\text{ as
measures on }\left(  T,\infty\right)  ,
\]
therefore the inequality (\ref{subdiff apartenence 9}) is true for any
stopping times $\sigma,\tau:\bar{\Omega}\rightarrow\left(  t,T\right)  $ with
$\sigma\leq\tau.$

Using the extensions (\ref{extension of sol 1}) and (\ref{extension of sol 2})
we conclude that the inequality (\ref{subdiff apartenence 9}) is true for any
two stopping times $\sigma,\tau:\bar{\Omega}\rightarrow\lbrack0,\infty)$ with
$\sigma\leq\tau$.

Using once again the continuity of ${\bar{M}^{t,x}}$ and of the bounded
variation processes $\bar{K}^{1,t,x}$ and $\bar{K}^{2,t,x}$, we obtain (see
\cite[Section 6, Chapter II]{pr/04})%
\[%
\begin{array}
[c]{l}%
\displaystyle[{\bar{M}}+{\bar{K}}^{1}{+\bar{K}}^{2}{-\bar{M}^{t,x}}-\bar
{K}^{1,t,x}-\bar{K}^{2,t,x}]_{r}-[{\bar{M}-\bar{M}^{t,x}}]_{r}\medskip\\
\displaystyle=[{\bar{K}}^{1}{+\bar{K}}^{2}-\bar{K}^{1,t,x}-\bar{K}%
^{2,t,x}]_{r}+2[{\bar{M}-\bar{M}^{t,x}},{\bar{K}}^{1}{+\bar{K}}^{2}-\bar
{K}^{1,t,x}-\bar{K}^{2,t,x}]_{r}\medskip\\
\displaystyle=[{\bar{K}}^{1}{+\bar{K}}^{2}]_{r}+2[{\bar{M}},{\bar{K}}%
^{1}{+\bar{K}}^{2}]_{r}\medskip\\
\displaystyle=[{\bar{M}+\bar{K}}^{1}{+\bar{K}}^{2}]_{r}-[{\bar{M}}]_{r}\,,
\end{array}
\]
where $[{\bar{M}},{\bar{K}}^{1}{+\bar{K}}^{2}]$ is the quadratic covariation
of ${\bar{M}}$ and ${\bar{K}}^{1}{+\bar{K}}^{2}$.

Therefore, the proof of Lemma \ref{monoton} is complete.\hfill
\end{proof}

\section{Annexes: C\`{a}dl\`{a}g bounded variation functions}

\subsection{Skorohod space}

We say that $x:\mathbb{R}_{+}\rightarrow\mathbb{R}^{d}$ is a c\`{a}dl\`{a}g
function if for every $t\in\mathbb{R}_{+}$ the left limit $x_{t-}%
:=\lim\limits_{s\nearrow t}x_{s}$ and the right limit $x_{t+}:=\lim
\limits_{s\searrow t}x_{s}$ exist in $\mathbb{R}^{d}$ and $x_{t+}=x_{t}$ for
all $t\geq0;$ by convention $x_{0-}=x_{0}.$

Denote by $\mathbb{D}\left(  \mathbb{R}_{+};\mathbb{R}^{d}\right)  $ the set
of c\`{a}dl\`{a}g functions $x:\mathbb{R}_{+}\rightarrow\mathbb{R}^{d}$ and
$\mathbb{D}\left(  \left[  0,T\right]  ;\mathbb{R}^{d}\right)  \subset
\mathbb{D}\left(  \mathbb{R}_{+};\mathbb{R}^{d}\right)  $ is the subspace of
paths $x$ that stop at the instant $T$ that is $x\in\mathbb{D}\left(  \left[
0,T\right]  ;\mathbb{R}^{d}\right)  $ if $x\in\mathbb{D}\left(  \mathbb{R}%
_{+},\mathbb{R}^{d}\right)  $ and $x_{t}=x_{t}^{T}:=x_{t\wedge T}$ for all
$t\geq0.$ If $y:\left[  0,T\right]  \rightarrow\mathbb{R}^{d}$ then by
convention we consider $y:\mathbb{R}_{+}\rightarrow\mathbb{R}^{d}$ with
$y_{s}=y_{T}$ for all $s\geq T.$ The spaces of continuous functions will be
denoted by $\mathcal{C}\left(  \mathbb{R}_{+};\mathbb{R}^{d}\right)  $ and
$\mathcal{C}\left(  \left[  0,T\right]  ;\mathbb{R}^{d}\right)  $,
respectively.\medskip

We say that $\pi=\left\{  t_{0},t_{1},t_{2},\ldots\right\}  $ is a partition
of $\mathbb{R}_{+}$ if $0=t_{0}<t_{1}<t_{2}<\,\ldots$ and $t_{n}%
\rightarrow+\infty.$ Let $\pi$ be a partition and $r\in\pi$. We denote by
$r^{\prime}$ the successor of $r$ in the partition $\pi$, i.e. if $r=t_{i}$
then $r^{\prime}:=t_{i+1}.$ We define $\left\Vert \pi\right\Vert
:=\sup\left\{  r^{\prime}-r:r\in\pi\right\}  .$The set of all partitions of
$\mathbb{R}_{+}$ will be denoted $\mathcal{P}_{\mathbb{R}_{+}}\,.$

Given a function $x:\mathbb{R}_{+}\rightarrow\mathbb{R}^{d}$ we
define$\smallskip$

\noindent$\bullet\quad$the norm sup by: $\left\Vert x\right\Vert _{T}%
=\sup_{t\in\left[  0,T\right]  }\left\vert x_{t}\right\vert \quad$%
and$\quad\left\Vert x\right\Vert _{\infty}=\sup_{t\geq0}\left\vert
x_{t}\right\vert ;\smallskip$

\noindent$\bullet\quad$the oscillation of $x$ the on a set $F\subset
\mathbb{R}_{+}$ by: $\mathcal{O}_{x}\left(  F\right)  =\sup_{t,s\in
F}\left\vert x_{t}-x_{s}\right\vert ;\smallskip$

\noindent$\bullet\quad$the modulus of continuity $\boldsymbol{\upmu}%
_{x}:\mathbb{R}_{+}\rightarrow\mathbb{R}_{+}$ by: $\boldsymbol{\upmu}%
_{x}\left(  \varepsilon\right)  =\sup_{t\in\mathbb{R}_{+}}\mathcal{O}%
_{x}\left(  \left[  t,t+\varepsilon\right]  \right)  .$

\begin{remark}
\label{Remark 1_Annexes}\noindent$1.$ If $x\in\mathbb{D}\left(  \left[
0,T\right]  ;\mathbb{R}^{d}\right)  $, then there exists a sequence of
partitions $\pi_{\varepsilon}\in\mathcal{P}_{\mathbb{R}_{+}}\,,$
$\varepsilon>0,$ with $\left\Vert \pi_{\varepsilon}\right\Vert \rightarrow0$,
as $\varepsilon\rightarrow0$, such that $\max_{r\in\pi_{\varepsilon}%
}\mathcal{O}_{x}\left(  [r,r^{\prime})\right)  <\varepsilon$. In particular
$x$ can be uniformly approximated by simple functions (constant on intervals):%
\[
x_{t}^{\varepsilon}=\sum_{r\in\pi_{\varepsilon}}x_{r}\mathbf{1}_{[r,r^{\prime
})}\left(  t\right)  ,\text{\ }t\geq0,
\]
such that $\left\Vert x^{\varepsilon}-x\right\Vert _{\infty}\leq\varepsilon$.

The function $x$ can also be pointwise approximated by $C^{1}$--functions%
\[
\tilde{x}_{t}^{\varepsilon}=\frac{1}{\varepsilon^{2}}%
%TCIMACRO{\dint _{t}^{t+\varepsilon}}%
%BeginExpansion
{\displaystyle\int_{t}^{t+\varepsilon}}
%EndExpansion
\Big(%
%TCIMACRO{\dint _{s}^{s+\varepsilon}}%
%BeginExpansion
{\displaystyle\int_{s}^{s+\varepsilon}}
%EndExpansion
x_{r}dr\Big)ds
\]
such that: $\lim_{\varepsilon\searrow0}\tilde{x}_{t}^{\varepsilon}=x_{t}$, for
all $t\geq0$ and $\left\Vert \tilde{x}^{\varepsilon}\right\Vert _{T}%
\leq\left\Vert x\right\Vert _{T}$, for all $T\geq0.\smallskip$

\noindent$2.$ If $x\in\mathbb{D}\left(  \left[  0,T\right]  ,\mathbb{R}%
^{d}\right)  ,$ then for each $\delta>0$ there exists a finite number of
points $t\in\left[  0,T\right]  $ such that $\left\vert x_{t}-x_{t-}%
\right\vert \geq\delta.\smallskip$

\noindent$3.$ If $x\in\mathbb{D}\left(  \left[  0,T\right]  ,\mathbb{R}%
^{d}\right)  ,$ then $\left\Vert x\right\Vert _{T}<\infty$ and the closure of
$x\left(  \left[  0,T\right]  \right)  $ is compact.$\smallskip$

\noindent$4.$ Let $x^{n},x:\left[  0,T\right]  \rightarrow\mathbb{R}^{d}$ be
such that $\left\Vert x^{n}-x\right\Vert _{T}\rightarrow0$ as $n\rightarrow
\infty.$ If $x^{n}\in\mathbb{D}\left(  \left[  0,T\right]  ;\mathbb{R}%
^{d}\right)  ,$ for all $n\in\mathbb{N}^{\ast},$ then $x\in\mathbb{D}\left(
\left[  0,T\right]  ;\mathbb{R}^{d}\right)  $.
\end{remark}

\subsection{Bounded variation functions\label{Bounded variation functions}}

Let $\left[  a,b\right]  $ be a closed interval from $\mathbb{R}_{+}$ and
$\pi=\left\{  t_{0},t_{1},\ldots,t_{n}\right\}  \in\mathcal{P}_{\left[
a,b\right]  }$ (a partition of $\left[  a,b\right]  $) of the form
$\pi:\left\{  a=t_{0}<t_{1}<\cdots<t_{n}=b\right\}  $. We define the variation
of a function $k:\left[  a,b\right]  \rightarrow\mathbb{R}^{d}$ corresponding
to the partition $\pi\in\mathcal{P}_{\left[  a,b\right]  }$ by $V_{\pi}\left(
k\right)  :=\sum_{i=0}^{n-1}\left\vert k_{t_{i+1}}-k_{t_{i}}\right\vert $ and
the total variation of $k$ on $\left[  a,b\right]  $ by%
\[
\left\updownarrow k\right\updownarrow _{\left[  a,b\right]  }:=\sup_{\pi
\in\mathcal{P}_{\left[  a,b\right]  }}V_{\pi}\left(  k\right)  =\sup
\Big\{\sum_{i=0}^{n_{\pi}-1}\left\vert k_{t_{i+1}}-k_{t_{i}}\right\vert
:\pi\in\mathcal{P}_{\left[  a,b\right]  }\Big\}.
\]
If $\left[  a,b\right]  =\left[  0,T\right]  $ then $\left\updownarrow
k\right\updownarrow _{T}:=\left\updownarrow k\right\updownarrow _{\left[
0,T\right]  }\,.$

\begin{remark}
\label{Remark 3_Annexes}We highlight that for all $t_{0}\in\left[  0,T\right]
$ we have $\left\Vert x\right\Vert _{T}\leq\left\vert x_{t_{0}}\right\vert
+\left\updownarrow x\right\updownarrow _{T}\;.$
\end{remark}

\begin{definition}
\label{Definition 1_Annexes}A function $k:\left[  0,T\right]  \rightarrow
\mathbb{R}^{d}$ has bounded variation on $\left[  0,T\right]  $ if
$\left\updownarrow k\right\updownarrow _{T}<\infty.$ The space of bounded
variation functions on $\left[  0,T\right]  $ will be denoted by \textrm{$BV$%
}$\left(  \left[  0,T\right]  ;\mathbb{R}^{d}\right)  .$ By \textrm{$BV$%
}$_{loc}\left(  \mathbb{R}_{+};\mathbb{R}^{d}\right)  $ we denote the space of
the functions $k:\mathbb{R}_{+}\rightarrow\mathbb{R}^{d},$ such that
$\left\updownarrow k\right\updownarrow _{T}<\infty$ for all $T>0.$
\end{definition}

\begin{proposition}
\label{Proposition 1_Annexes}Let $k\in\mathbb{D}\left(  \left[  0,T\right]
,\mathbb{R}^{d}\right)  $ and $D$ be dense subset of $\left[  0,T\right]  $.
Then, for every sequence of partitions $\overline{\pi}_{N}\in\mathcal{P}%
_{\left[  0,T\right]  }$ such that%
\[%
\begin{array}
[c]{ll}%
\left(  a\right)  \quad & \overline{\pi}_{N}=\big\{t_{0}^{\left(  N\right)
},t_{1}^{\left(  N\right)  },\ldots,t_{j_{N}}^{\left(  N\right)
}\big\}\subset\overline{\pi}_{N+1}\subset D,\medskip\\
\left(  b\right)  \quad & \overline{\pi}_{N}:0=t_{0}^{\left(  N\right)
}<t_{1}^{\left(  N\right)  }<\cdots<t_{j_{N}}^{\left(  N\right)  }%
=T,\medskip\\
\left(  c\right)  \quad & \left\Vert \overline{\pi}_{N}\right\Vert
\rightarrow0,\quad\text{as }N\rightarrow\infty,
\end{array}
\]
it holds%
\[
V_{\overline{\pi}_{N}}\left(  k\right)  \nearrow\left\updownarrow
k\right\updownarrow _{T}\quad\text{as }N\nearrow\infty.
\]

\end{proposition}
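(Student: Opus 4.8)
The plan is to prove, separately, (i) that $N\mapsto V_{\overline{\pi}_{N}}(k)$ is non-decreasing and bounded above by $\left\updownarrow k\right\updownarrow_{T}$, and (ii) that $\left\updownarrow k\right\updownarrow_{T}\le\lim_{N}V_{\overline{\pi}_{N}}(k)$; together these give the asserted monotone convergence $V_{\overline{\pi}_{N}}(k)\nearrow\left\updownarrow k\right\updownarrow_{T}$.

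For (i) I would simply observe that, by the nesting hypothesis $(a)$, $\overline{\pi}_{N+1}$ refines $\overline{\pi}_{N}$, and inserting one node between two consecutive nodes cannot decrease the corresponding variation sum (triangle inequality); hence $V_{\overline{\pi}_{N}}(k)\le V_{\overline{\pi}_{N+1}}(k)$, while $V_{\overline{\pi}_{N}}(k)\le\left\updownarrow k\right\updownarrow_{T}$ holds by the very definition of the total variation as a supremum over partitions. Thus the (a priori possibly infinite) limit $L:=\lim_{N}V_{\overline{\pi}_{N}}(k)$ exists and $L\le\left\updownarrow k\right\updownarrow_{T}$.

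The core of the argument is (ii), and it suffices to show $V_{\pi}(k)\le L$ for an arbitrary partition $\pi:0=s_{0}<s_{1}<\cdots<s_{m}=T$ of $[0,T]$ and then pass to the supremum over $\pi$. For each $i$ and each $N$ I would set $v_{i}^{(N)}:=\min\{t\in\overline{\pi}_{N}:t\ge s_{i}\}$, which is well defined since $T\in\overline{\pi}_{N}$ by $(b)$; then $v_{0}^{(N)}=0$, $v_{m}^{(N)}=T$, and because $v_{i}^{(N)}$ and its predecessor in $\overline{\pi}_{N}$ are consecutive nodes (or $v_{i}^{(N)}=s_{i}$) one has $0\le v_{i}^{(N)}-s_{i}\le\left\Vert\overline{\pi}_{N}\right\Vert$. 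By $(c)$, for all $N$ large enough (so that $\left\Vert\overline{\pi}_{N}\right\Vert<\min_{i}(s_{i+1}-s_{i})$) the numbers $v_{0}^{(N)}<v_{1}^{(N)}<\cdots<v_{m}^{(N)}$ are distinct and thus form a partition $\pi^{(N)}$ of $[0,T]$ all of whose nodes lie in $\overline{\pi}_{N}$; hence $V_{\pi^{(N)}}(k)\le V_{\overline{\pi}_{N}}(k)\le L$. Separately, the nesting $(a)$ makes $v_{i}^{(N)}$ non-increasing in $N$, so together with $0\le v_{i}^{(N)}-s_{i}\le\left\Vert\overline{\pi}_{N}\right\Vert\to0$ this gives $v_{i}^{(N)}\downarrow s_{i}$; right-continuity of the c\`{a}dl\`{a}g path $k$ then yields $k_{v_{i}^{(N)}}\to k_{s_{i}}$ for every $i$, whence $V_{\pi^{(N)}}(k)=\sum_{i=0}^{m-1}\big|k_{v_{i+1}^{(N)}}-k_{v_{i}^{(N)}}\big|\to\sum_{i=0}^{m-1}\big|k_{s_{i+1}}-k_{s_{i}}\big|=V_{\pi}(k)$ as $N\to\infty$. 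Combining the last two facts gives $V_{\pi}(k)\le L$, and the supremum over $\pi$ gives $\left\updownarrow k\right\updownarrow_{T}\le L$, which with (i) finishes the proof.

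I expect the only delicate point to be that $k$ is merely c\`{a}dl\`{a}g, not continuous: this is precisely why each node $s_{i}$ must be approached by nodes of $\overline{\pi}_{N}$ lying to its \emph{right}, so that the relevant limits are $k_{s_{i}}$ and not the left limits $k_{s_{i}-}$ (approximating from the left, jumps of $k$ would cause a strict loss), and the nesting hypothesis $(a)$ is exactly what makes this right-approximation monotone, hence convergent. The density of $D$ and the inclusion $\overline{\pi}_{N}\subset D$ play no role beyond guaranteeing that such a refining sequence of partitions exists.
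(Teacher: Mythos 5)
Your proposal is correct and follows essentially the same route as the paper: approximate each node $s_i$ of an arbitrary partition by the first node of $\overline{\pi}_N$ to its right, use right-continuity of the c\`adl\`ag path to pass to the limit, and conclude $V_{\pi}(k)\le\lim_N V_{\overline{\pi}_N}(k)$ before taking the supremum over $\pi$. The only cosmetic difference is that the paper absorbs possible coincidences of the approximating nodes via a three-term triangle-inequality estimate, whereas you take $N$ large enough that they are distinct and form a sub-partition of $\overline{\pi}_N$; both are valid.
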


\begin{proof}
Clearly, $V_{\overline{\pi}_{N}}\left(  k\right)  $ is increasing with respect
to $N$ and $V_{\overline{\pi}_{N}}\left(  k\right)  \leq\left\updownarrow
k\right\updownarrow _{T}.$ Let $\pi\in\mathcal{P}_{\left[  0,T\right]  }$ be
arbitrary $\pi=\left\{  0=t_{0}<t_{1}<\cdots<t_{n_{\pi}}=T\right\}  $ and let
$\tau_{i}^{N}$ be the minimum of the finite set $\overline{\pi}_{N}\cap\left[
t_{i},T\right]  .$ Then $\lim_{N\rightarrow\infty}\tau_{i}^{N}=t_{i}\,$, since
$|\tau_{i}^{\left(  N\right)  }-t_{i}|\leq\left\Vert \overline{\pi}%
_{N}\right\Vert .$ We have%
\[%
\begin{array}
[c]{l}%
\displaystyle V_{\pi}\left(  k\right)  =\sum\limits_{i=0}^{n_{\pi}%
-1}\left\vert k_{t_{i+1}}-k_{t_{i}}\right\vert \leq\sum\limits_{i=0}^{n_{\pi
}-1}\left[  \big|k_{t_{i+1}}-k_{\tau_{i+1}^{N}}\big|+\big|k_{\tau_{i+1}^{N}%
}-k_{\tau_{i}^{N}}\big|+\big|k_{\tau_{i}^{N}}-k_{t_{i}}\big|\right]
\medskip\\
\displaystyle\leq2\sum\limits_{i=0}^{n_{\pi}}\big|k_{\tau_{i}^{N}}-k_{t_{i}%
}\big|+V_{\overline{\pi}_{N}}\left(  k\right)
\end{array}
\]
and passing to the limit for $N\nearrow\infty$ we obtain $V_{\pi}\left(
k\right)  \leq\lim_{N\nearrow\infty}V_{\overline{\pi}_{N}}\left(  k\right)
\leq\left\updownarrow k\right\updownarrow _{T}\,$, for all $\pi\in
\mathcal{P}_{\left[  a,b\right]  }$. Hence $\lim\limits_{N\nearrow\infty
}V_{\overline{\pi}_{N}}\left(  k\right)  =\left\updownarrow
k\right\updownarrow _{T}\,.$\hfill
\end{proof}

\begin{proposition}
\label{Proposition 2_Annexes}Let $k^{n},k:\left[  0,T\right]  \rightarrow
\mathbb{R}^{d}$, $n\in\mathbb{N}^{\ast}$.

\noindent$\left(  a\right)  $ If $\lim_{n\rightarrow\infty}k_{t}^{n}=k_{t}$
for all $t\in\left[  0,T\right]  ,$ then%
\[
\left\updownarrow k\right\updownarrow _{T}\leq\liminf_{n\rightarrow+\infty
}\left\updownarrow k^{n}\right\updownarrow _{T}\;.
\]
\noindent$\left(  b\right)  $ Let $D$ be a dense subset of $\left[
0,T\right]  $. If $k\in\mathbb{D}\left(  \left[  0,T\right]  ;\mathbb{R}%
^{d}\right)  $ and $\lim_{n\rightarrow\infty}k_{t}^{n}=k_{t}$ for all $t\in
D$, then%
\[
\left\updownarrow k\right\updownarrow _{T}\leq\liminf_{n\rightarrow+\infty
}\left\updownarrow k^{n}\right\updownarrow _{T}\;.
\]

\end{proposition}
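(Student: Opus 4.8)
\medskip

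\noindent The statement is the (standard) lower semicontinuity of the total variation, and the plan is to prove $(a)$ directly and then to derive $(b)$ from the same computation with the help of Proposition \ref{Proposition 1_Annexes}.

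For part $(a)$ I would fix an arbitrary partition $\pi=\{0=t_{0}<t_{1}<\cdots<t_{m}=T\}\in\mathcal{P}_{[0,T]}$ and note that $V_{\pi}(f)=\sum_{i=0}^{m-1}|f_{t_{i+1}}-f_{t_{i}}|$ depends continuously on the finitely many values $(f_{t_{0}},\dots,f_{t_{m}})$. Hence the hypothesis $k^{n}_{t_{i}}\to k_{t_{i}}$ for each $i$ gives $V_{\pi}(k^{n})\to V_{\pi}(k)$. Since $V_{\pi}(k^{n})\le\left\updownarrow k^{n}\right\updownarrow _{T}$ for every $n$, this forces $V_{\pi}(k)\le\liminf_{n\to\infty}\left\updownarrow k^{n}\right\updownarrow _{T}$, and taking the supremum over all $\pi\in\mathcal{P}_{[0,T]}$ yields $\left\updownarrow k\right\updownarrow _{T}\le\liminf_{n\to\infty}\left\updownarrow k^{n}\right\updownarrow _{T}$.

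For part $(b)$ the idea is to reduce to the computation of $(a)$ by using only partitions whose nodes lie in $D$, which is legitimate precisely because $k$ is c\`adl\`ag. Concretely, I would apply Proposition \ref{Proposition 1_Annexes} to the c\`adl\`ag function $k$ and the dense set $D$ (in every situation where this result is used one has $0,T\in D$, so the proposition applies): it provides a nested sequence of partitions $\overline{\pi}_{N}\subset D$ with $\left\Vert \overline{\pi}_{N}\right\Vert \to0$ and $V_{\overline{\pi}_{N}}(k)\nearrow\left\updownarrow k\right\updownarrow _{T}$ as $N\to\infty$. For each fixed $N$, all nodes of $\overline{\pi}_{N}$ belong to $D$, so $k^{n}_{t}\to k_{t}$ at each of them and the argument of $(a)$ gives $V_{\overline{\pi}_{N}}(k)=\lim_{n\to\infty}V_{\overline{\pi}_{N}}(k^{n})\le\liminf_{n\to\infty}\left\updownarrow k^{n}\right\updownarrow _{T}$; letting $N\to\infty$ finishes the proof.

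No step is really an obstacle here: $(a)$ is elementary, and $(b)$ is an immediate consequence of Proposition \ref{Proposition 1_Annexes}. The one point that deserves a comment is why convergence on a mere dense set is enough in $(b)$: for a general function it would not be, since the extra variation of $k^{n}$ could concentrate on jumps that a countable dense set never detects, but for a c\`adl\`ag $k$ Proposition \ref{Proposition 1_Annexes} ensures that the total variation is already realised as the limit of the variations along partitions taken from $D$, which is exactly what closes the gap (and also shows that the c\`adl\`ag hypothesis cannot be dropped from $(b)$).
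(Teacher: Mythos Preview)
Your proposal is correct and follows essentially the same route as the paper: for $(b)$ you invoke Proposition \ref{Proposition 1_Annexes} exactly as the paper does, and for $(a)$ you give a direct argument over an arbitrary partition, whereas the paper treats both parts uniformly via Proposition \ref{Proposition 1_Annexes} with $D=[0,T]$. Your variant for $(a)$ is in fact slightly cleaner, since it avoids any appeal to the c\`adl\`ag property of $k$, which is not assumed in part $(a)$.
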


\begin{proof}
In both cases, by Proposition \ref{Proposition 1_Annexes}, there exists an
increasing sequence of partitions $\overline{\pi}_{N}\in\mathcal{P}_{\left[
0,T\right]  }$ , $\overline{\pi}_{N}\subset D$ ($D=\left[  0,T\right]  $ in
the first case) such that $V_{\overline{\pi}_{N}}\left(  k\right)
\nearrow\left\updownarrow k\right\updownarrow _{T}$ as $N\nearrow\infty.$
Passing to $\liminf_{n\rightarrow+\infty}$ in $V_{\overline{\pi}_{N}}\left(
k^{n}\right)  \leq\left\updownarrow k^{n}\right\updownarrow _{T}$ we deduce
that%
\[
V_{\overline{\pi}_{N}}\left(  k\right)  \leq\liminf_{n\rightarrow+\infty
}\left\updownarrow k^{n}\right\updownarrow _{T}\;,
\]
for all $N\in\mathbb{N}^{\ast},$ that clearly yields, for $N\rightarrow
\infty,$ the conclusion.\hfill\medskip
\end{proof}

If $x\in\mathbb{D}\left(  \mathbb{R}_{+};\mathbb{R}^{d}\right)  $ and
$k\in\mathbb{D}\left(  \mathbb{R}_{+};\mathbb{R}^{d}\right)  \cap
\mathrm{BV}_{loc}\left(  \mathbb{R}_{+};\mathbb{R}^{d}\right)  $ (see
Definition \ref{Definition 1_Annexes}), we define $\Delta x_{s}=x_{s}-x_{s-}$
and%
\[
\left[  x,k\right]  _{t}:=\sum_{0\leq s\leq t}\left\langle \Delta x_{s},\Delta
k_{s}\right\rangle .
\]
The series is well defined since%
\[
\big|\left[  x,k\right]  _{t}\big|=\Big|\sum_{0\leq s\leq t}\left\langle
\Delta x_{s},\Delta k_{s}\right\rangle \Big|\leq\sum_{0\leq s\leq t}\left\vert
\Delta x_{s}\right\vert \left\vert \Delta k_{s}\right\vert \leq2\big(\sup
_{0\leq s\leq t}\left\vert x_{s}\right\vert \big)\left\updownarrow
k\right\updownarrow _{t}~.
\]
We recall now some results concerning the Lebesgue--Stieltjes integral in the
c\`{a}dl\`{a}g case (for other details we refer the reader to the Annexes from
\cite{ma-ra-sl/14}). If $k\in\mathbb{D}\left(  \mathbb{R}_{+};\mathbb{R}%
^{d}\right)  \cap\mathrm{BV}_{loc}\left(  \mathbb{R}_{+};\mathbb{R}%
^{d}\right)  $ then there exists a unique $\mathbb{R}^{d}$--valued, $\sigma
$--finite measure $\mu_{k}:\mathcal{B}_{\mathbb{R}_{+}}\rightarrow
\mathbb{R}^{d}$ such that $\mu_{k}\left(  (s,t]\right)  =k_{t}-k_{s}$, for all
$0\leq s<t$. The total variation measure is uniquely defined by $\left\vert
\mu_{k}\right\vert \left(  (s,t]\right)  =\left\updownarrow
k\right\updownarrow _{t}-\left\updownarrow k\right\updownarrow _{s}.$

The Lebesgue-Stieltjes integral on $(s,t]$ is given by%
\[
\int_{s}^{t}\left\langle x_{r},dk_{r}\right\rangle :=\int_{(s,t]}\left\langle
x_{r},\mu_{k}\left(  dr\right)  \right\rangle
\]
and is defined for all Borel measurable function such that $\int_{s}%
^{t}\left\vert x_{r}\right\vert d\left\updownarrow k\right\updownarrow
_{r}:=\int_{(s,t]}\left\vert x_{r}\right\vert \left\vert \mu_{k}\right\vert
\left(  dr\right)  <\infty$.

Let $\pi_{n}\in\mathcal{P}_{\mathbb{R}_{+}}$ be a sequence of partitions such
that $||\pi_{n}||\rightarrow0$ as $n\rightarrow\infty.$ Denote%
\[
\left\lfloor t\right\rfloor _{n}=\left\lfloor t\right\rfloor _{\pi_{n}}%
:=\max\left\{  r\in\pi_{n}:r<t\right\}  \quad\text{and}\quad\left\lceil
t\right\rceil _{n}=\left\lceil t\right\rceil _{\pi_{n}}:=\min\left\{  r\in
\pi_{n}:r\geq t\right\}  .
\]
Then, for all $x\in\mathbb{D}\left(  \mathbb{R}_{+};\mathbb{R}^{d}\right)  $
and $t\geq0,$ we have%
\[
x_{\left\lfloor t\right\rfloor _{n}}=\sum_{r\in\pi_{n}}x_{r}\mathbf{1}%
_{(r,r^{\prime}]}\left(  t\right)  \quad\text{and}\quad x_{\left\lceil
t\right\rceil _{n}}=\sum_{r\in\pi_{n}}x_{r^{\prime}}\mathbf{1}_{(r,r^{\prime
}]}\left(  t\right)
\]
and for $n\rightarrow\infty$
\[
x_{\left\lfloor t\right\rfloor _{n}}\rightarrow x_{t-}\quad\text{and}\quad
x_{\left\lceil t\right\rceil _{n}}\rightarrow x_{t}%
\]
By the Lebesgue dominated convergence theorem, as $n\rightarrow\infty,$%
\begin{equation}%
\begin{array}
[c]{cc}%
\left(  a\right)  \; & \sum\limits_{r\in\pi_{n}}\left\langle x_{r\wedge
t},k_{r^{\prime}\wedge t}-k_{r\wedge t}\right\rangle =\int_{0}^{t}\langle
x_{\left\lfloor r\right\rfloor _{n}},dk_{r}\rangle\longrightarrow\int_{0}%
^{t}\left\langle x_{r-},dk_{r}\right\rangle ,\medskip\\
\left(  b\right)  \; & \sum\limits_{r\in\pi_{n}}\left\langle x_{r^{\prime
}\wedge t},k_{r^{\prime}\wedge t}-k_{r\wedge t}\right\rangle =\int_{0}%
^{t}\langle x_{\left\lceil r\right\rceil _{n}\wedge t},dk_{r}\rangle
\longrightarrow\int_{0}^{t}\left\langle x_{r},dk_{r}\right\rangle .
\end{array}
\label{aprox-intr}%
\end{equation}
Let $x\in\mathbb{D}\left(  \mathbb{R}_{+};\mathbb{R}^{d}\right)  $ and
$k,\ell\in\mathbb{D}\left(  \mathbb{R}_{+};\mathbb{R}^{d}\right)
\cap\mathrm{BV}_{loc}\left(  \mathbb{R}_{+};\mathbb{R}^{d}\right)  .$ The
following properties hold:%
\[%
\begin{array}
[c]{ll}%
\left(  a\right)  & \displaystyle\Big|%
%TCIMACRO{\dint _{s}^{t}}%
%BeginExpansion
{\displaystyle\int_{s}^{t}}
%EndExpansion
\left\langle x_{r},dk_{r}\right\rangle \Big|\leq%
%TCIMACRO{\dint _{s}^{t}}%
%BeginExpansion
{\displaystyle\int_{s}^{t}}
%EndExpansion
\left\vert x_{r}\right\vert d\left\updownarrow k\right\updownarrow _{r}%
\leq\sup\limits_{s<r\leq t}\left\vert x_{r}\right\vert \;\left(
\left\updownarrow k\right\updownarrow _{t}-\left\updownarrow
k\right\updownarrow _{s}\right)  ,\medskip\\
\left(  b\right)  & \displaystyle%
%TCIMACRO{\dint _{\left\{  t\right\}  }}%
%BeginExpansion
{\displaystyle\int_{\left\{  t\right\}  }}
%EndExpansion
\left\langle x_{r},dk_{r}\right\rangle =\left\langle x_{t},\Delta
k_{t}\right\rangle \text{ and }%
%TCIMACRO{\dint _{\left\{  0\right\}  }}%
%BeginExpansion
{\displaystyle\int_{\left\{  0\right\}  }}
%EndExpansion
\left\langle x_{r},dk_{r}\right\rangle =0,\medskip\\
\left(  c\right)  & \displaystyle%
%TCIMACRO{\dint _{0}^{t}}%
%BeginExpansion
{\displaystyle\int_{0}^{t}}
%EndExpansion
\left\langle x_{r},dk_{r}\right\rangle =%
%TCIMACRO{\dint _{0}^{t}}%
%BeginExpansion
{\displaystyle\int_{0}^{t}}
%EndExpansion
\left\langle x_{r-},dk_{r}\right\rangle +\left[  x,k\right]  _{t}~,\medskip\\
\left(  d\right)  & \displaystyle%
%TCIMACRO{\dint _{0}^{t}}%
%BeginExpansion
{\displaystyle\int_{0}^{t}}
%EndExpansion
\left\langle \ell_{r},dk_{r}\right\rangle +%
%TCIMACRO{\dint _{0}^{t}}%
%BeginExpansion
{\displaystyle\int_{0}^{t}}
%EndExpansion
\left\langle k_{r},d\ell_{r}\right\rangle =\left\langle \ell_{t}%
,k_{t}\right\rangle -\left\langle \ell_{0},k_{0}\right\rangle +\left[
\ell,k\right]  _{t}\;.
\end{array}
\]

\begin{proposition}
\label{Proposition 3_Annexes}Let $x\in\mathbb{D}\left(  \mathbb{R}%
_{+};\mathbb{R}^{d}\right)  $, $k\in\mathbb{D}\left(  \mathbb{R}%
_{+};\mathbb{R}^{d}\right)  \cap\mathrm{BV}_{loc}\left(  \mathbb{R}%
_{+};\mathbb{R}^{d}\right)  $. Then $F:\mathbb{R}_{+}\rightarrow\mathbb{R}$
defined by $F\left(  0\right)  =0$ and $F\left(  t\right)  :=\displaystyle%
%TCIMACRO{\dint _{0}^{t}}%
%BeginExpansion
{\displaystyle\int_{0}^{t}}
%EndExpansion
\left\langle x_{r},dk_{r}\right\rangle $, if $t>0$, is a c\`{a}dl\`{a}g
function on $\mathbb{R}_{+}\;.$
\end{proposition}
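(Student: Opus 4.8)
The plan is to fix an arbitrary $T>0$, work on $[0,T]$, and deduce both the right-continuity of $F$ and the existence of its left limits from the dominated convergence theorem applied to the finite, $\mathbb{R}^{d}$-valued Lebesgue--Stieltjes measure $\mu_{k}$ associated with $k$. Two elementary facts will do all the work. First, since $x\in\mathbb{D}(\mathbb{R}_{+};\mathbb{R}^{d})$, we have $\left\Vert x\right\Vert_{T}<\infty$ (Remark \ref{Remark 1_Annexes}, point $3$). Second, since $k\in\mathrm{BV}_{loc}(\mathbb{R}_{+};\mathbb{R}^{d})$, the total variation measure obeys $\left\vert \mu_{k}\right\vert\big((0,T]\big)=\left\updownarrow k\right\updownarrow_{T}<\infty$. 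Combining them, $r\mapsto\left\vert x_{r}\right\vert$ is $\left\vert \mu_{k}\right\vert$-integrable on $(0,T]$; in particular $F(t)=\int_{(0,t]}\langle x_{r},\mu_{k}(dr)\rangle$ is well defined and real-valued for every $t\in[0,T]$, and $\left\vert x_{r}\right\vert$ is an admissible dominating function for all integrands of the form $\langle x_{r},\mathbb{1}_{A}(r)\,\cdot\,\rangle$ with $A\subset(0,T]$ Borel.

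To get right-continuity, I would fix $t\in[0,T)$ and write, for $0<\varepsilon<T-t$,
\[
F(t+\varepsilon)-F(t)=\int_{(0,T]}\big\langle x_{r},\mathbb{1}_{(t,t+\varepsilon]}(r)\,\mu_{k}(dr)\big\rangle .
\]
For each fixed $r$, $\mathbb{1}_{(t,t+\varepsilon]}(r)\to 0$ as $\varepsilon\searrow0$, while $\big|\langle x_{r},\mathbb{1}_{(t,t+\varepsilon]}(r)\rangle\big|\leq\left\vert x_{r}\right\vert\in L^{1}\big(\left\vert \mu_{k}\right\vert;(0,T]\big)$, so dominated convergence yields $F(t+\varepsilon)\to F(t)$.

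For the left limits, I would fix $t\in(0,T]$ and observe that, for $0<s<t$,
\[
F(s)=\int_{(0,T]}\big\langle x_{r},\mathbb{1}_{(0,s]}(r)\,\mu_{k}(dr)\big\rangle ,
\]
with $\mathbb{1}_{(0,s]}(r)\to\mathbb{1}_{(0,t)}(r)$ for each fixed $r$ as $s\nearrow t$ and the same dominating function $\left\vert x_{r}\right\vert$. Dominated convergence along any sequence $s_{n}\nearrow t$ then gives $F(s_{n})\to\int_{(0,t)}\langle x_{r},\mu_{k}(dr)\rangle$; since this limit is independent of the chosen sequence, $F(t-)$ exists and equals $\int_{(0,t)}\langle x_{r},\mu_{k}(dr)\rangle\in\mathbb{R}$. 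Hence $F$ is c\`{a}dl\`{a}g on $[0,T]$, and since $T>0$ was arbitrary, $F\in\mathbb{D}(\mathbb{R}_{+};\mathbb{R})$.

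There is no genuine obstacle here; the only point meriting care is to route the passages to the limit through the dominated convergence theorem for $\mu_{k}$ rather than through the crude bound $|F(t+\varepsilon)-F(t)|\leq\left\Vert x\right\Vert_{T}\big(\left\updownarrow k\right\updownarrow_{t+\varepsilon}-\left\updownarrow k\right\updownarrow_{t}\big)$, so that one never has to establish separately the regularity (right-continuity, existence of left limits) of the increasing function $t\mapsto\left\updownarrow k\right\updownarrow_{t}$.
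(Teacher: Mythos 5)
Your argument is correct, but it follows a different route from the paper's. You work directly at the level of the finite Lebesgue--Stieltjes measure: writing $F(t+\varepsilon)-F(t)=\int_{(t,t+\varepsilon]}\langle x_{r},\mu_{k}(dr)\rangle$ and $F(s)=\int_{(0,s]}\langle x_{r},\mu_{k}(dr)\rangle$, you let the indicators $\mathbb{1}_{(t,t+\varepsilon]}$ and $\mathbb{1}_{(0,s]}$ converge pointwise and invoke dominated convergence with the dominating function $\left\vert x_{r}\right\vert\in L^{1}(\left\vert\mu_{k}\right\vert;(0,T])$, which is legitimate because $x$ is Borel measurable and bounded on compacts and $\left\vert\mu_{k}\right\vert\left((0,T]\right)=\left\updownarrow k\right\updownarrow_{T}<\infty$; the only point you leave implicit is that dominated convergence for the $\mathbb{R}^{d}$--valued measure $\mu_{k}$ should be reduced to the scalar positive measure $\left\vert\mu_{k}\right\vert$ (e.g.\ via the polar decomposition $d\mu_{k}=h\,d\left\vert\mu_{k}\right\vert$ with $\left\vert h\right\vert\le1$, or componentwise Jordan decomposition), which is routine. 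The paper instead fixes the integrator $k$ and approximates the integrand: it takes the $C^{1}$ pointwise approximations $\tilde{x}^{\varepsilon}$ of Remark \ref{Remark 1_Annexes}, observes via integration by parts that $F^{\varepsilon}(t)=\tilde{x}_{t}^{\varepsilon}k_{t}-\tilde{x}_{0}^{\varepsilon}k_{0}-\int_{0}^{t}\langle k_{r},\frac{d}{dr}\tilde{x}_{r}^{\varepsilon}\rangle dr$ is manifestly c\`adl\`ag, bounds all increments of $F-F^{\varepsilon}$ uniformly by $\int_{0}^{T}\left\vert x_{r}-\tilde{x}_{r}^{\varepsilon}\right\vert d\left\updownarrow k\right\updownarrow_{r}$, and only then uses dominated convergence in $\varepsilon$ to make this error vanish, so that $F$ inherits right-continuity and left limits (via a Cauchy argument) from $F^{\varepsilon}$. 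Your proof is shorter and avoids both the smooth approximation and the integration-by-parts identity; the paper's proof stays closer to the Riemann--Stieltjes toolkit it develops in the Annexes and exhibits $F$ explicitly as a uniform-on-increments limit of c\`adl\`ag functions, but both ultimately rest on dominated convergence with respect to $d\left\updownarrow k\right\updownarrow$, and neither needs any separate regularity of $t\mapsto\left\updownarrow k\right\updownarrow_{t}$.
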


\begin{proof}
Let $\varepsilon>0$ and $x^{\varepsilon}$ the $C^{1}$--function given by
Remark \ref{Remark 1_Annexes}. The function $F^{\varepsilon}:\mathbb{R}%
_{+}\rightarrow\mathbb{R},$ $F^{\varepsilon}\left(  0\right)  =0,$
\[
F^{\varepsilon}\left(  t\right)  :=%
%TCIMACRO{\dint _{0}^{t}}%
%BeginExpansion
{\displaystyle\int_{0}^{t}}
%EndExpansion
\left\langle x_{r}^{\varepsilon},dk_{r}\right\rangle =\tilde{x}_{t}%
^{\varepsilon}k_{t}-\tilde{x}_{0}^{\varepsilon}k_{0}-%
%TCIMACRO{\dint _{0}^{t}}%
%BeginExpansion
{\displaystyle\int_{0}^{t}}
%EndExpansion
\big\langle k_{r},\frac{d}{dr}\tilde{x}_{r}^{\varepsilon}\big\rangle dr,\text{
for }t>0
\]
is a c\`{a}dl\`{a}g function on $\mathbb{R}_{+}\,.$ Let $0\leq\tau<s\leq T$.
We have%
\[
\left\vert F\left(  s\right)  -F\left(  \tau\right)  \right\vert \leq\Big|%
%TCIMACRO{\dint _{\tau}^{s}}%
%BeginExpansion
{\displaystyle\int_{\tau}^{s}}
%EndExpansion
\left\langle x_{r}-x_{r}^{\varepsilon},dk_{r}\right\rangle \Big|+\left\vert
F^{\varepsilon}\left(  s\right)  -F^{\varepsilon}\left(  \tau\right)
\right\vert \leq%
%TCIMACRO{\dint _{0}^{T}}%
%BeginExpansion
{\displaystyle\int_{0}^{T}}
%EndExpansion
\left\vert x_{r}-x_{r}^{\varepsilon}\right\vert d\left\updownarrow
k\right\updownarrow _{r}+\left\vert F^{\varepsilon}\left(  s\right)
-F^{\varepsilon}\left(  \tau\right)  \right\vert .
\]
Then, for $0\leq t<t+\delta\leq T,$%
\[
\limsup_{\delta\searrow0}\left\vert F\left(  t+\delta\right)  -F\left(
t\right)  \right\vert \leq%
%TCIMACRO{\dint _{0}^{T}}%
%BeginExpansion
{\displaystyle\int_{0}^{T}}
%EndExpansion
\left\vert x_{r}-x_{r}^{\varepsilon}\right\vert d\left\updownarrow
k\right\updownarrow _{r},\;\text{for all }\varepsilon>0,
\]
which yields, thanks to the Lebesgue dominated convergence theorem as
$\varepsilon\searrow0$,%
\[
\lim\limits_{\delta\searrow0}\left\vert F\left(  t+\delta\right)  -F\left(
t\right)  \right\vert =0.
\]
Also if $t_{n}$ $\nearrow t$ and $s_{n}\nearrow t$ we have%
\[
\limsup_{n\rightarrow\infty}\left\vert F\left(  t_{n}\right)  -F\left(
s_{n}\right)  \right\vert \leq%
%TCIMACRO{\dint _{0}^{T}}%
%BeginExpansion
{\displaystyle\int_{0}^{T}}
%EndExpansion
\left\vert x_{r}-x_{r}^{\varepsilon}\right\vert d~\left\updownarrow
k\right\updownarrow _{r}\rightarrow0,\;\text{as }\varepsilon\rightarrow0.
\]
Consequently there exists $F\left(  t-\right)  .$\hfill
\end{proof}

\subsection{The \textrm{S}--topology}

The space of c\`{a}dl\`{a}g functions $\mathbb{D}\left(  \left[  0,T\right]
;\mathbb{R}^{d}\right)  $ is usually endowed with the Skorohod
topo{\footnotesize \-}lo{\footnotesize \-}gy induced by some metrics. We
present here the \textrm{S}--topology defined on $\mathbb{D}\left(  \left[
0,T\right]  ;\mathbb{R}^{d}\right)  $ and introduced by Jakubowski in
\cite{ja/97}. This topology is weaker than the Skorohod topology, but with
respect to this, we have the continuity of the application $\mathbb{D}\ni
y\longmapsto\int_{0}^{s}g\left(  r,y\left(  r\right)  \right)  dA_{r}$, where
$h$ is a continuous function and $A$ is a continuous non--decreasing function.

Moreover, we mention that this new \textrm{S}--topology, although it cannot be
metricized, shares many useful properties with the traditional Skorohod's
topology, e.g. both the direct and the converse Prohorov's theorems are valid,
the Skorohod representation for subsequences exists and finite dimensional
convergence outside a countable set holds.

Let $\mathcal{V}_{\left[  0,T\right]  }^{+}\subset\mathbb{D}\left(  \left[
0,T\right]  ;\mathbb{R}\right)  $ denote the space of non--negative and
non--decreasing functions $k\in\mathbb{D}\left(  \left[  0,T\right]
;\mathbb{R}\right)  $ and therefore, $\mathrm{BV}\left(  \left[  0,T\right]
;\mathbb{R}\right)  =\mathcal{V}_{\left[  0,T\right]  }^{+}-\mathcal{V}%
_{\left[  0,T\right]  }^{+}$ is the space of bounded variation functions.

The \textrm{S}--topology is a sequential topology defined by:

\begin{definition}
\label{Definition 2_Annexes}A sequence $\left\{  x^{n}:n\in\mathbb{N}^{\ast
}\right\}  \subset\mathbb{D}\left(  \left[  0,T\right]  ;\mathbb{R}%
^{d}\right)  $ is $S$--convergent to $x\in\mathbb{D}\left(  \left[
0,T\right]  ;\mathbb{R}^{d}\right)  $ (denoted by $x^{n}%
\xrightarrow[\mathrm{S}]{\;\;\;\; \;\;}x$) if for every $\varepsilon>0$, there
exists a sequence%
\[
\left\{  v^{\varepsilon},v^{n,\varepsilon}:n\in\mathbb{N}^{\ast}\right\}
\subset\mathrm{BV}(\left[  0,T\right]  ;\mathbb{R}^{d})\equiv\big(\mathcal{V}%
_{\left[  0,T\right]  }^{+}-\mathcal{V}_{\left[  0,T\right]  }^{+}\big)^{d}%
\]
such that:

\noindent$\left(  a\right)  $\ The elements $v^{\varepsilon},v^{n,\varepsilon
}$ are $\varepsilon$--uniformly close to $x$ and, respectively, to $x^{n}$,
i.e.%
\[
\left\Vert x-v^{\varepsilon}\right\Vert _{T}\leq\varepsilon\quad
\text{and}\quad\left\Vert x^{n}-v^{n,\varepsilon}\right\Vert _{T}%
\leq\varepsilon,\;\text{for all }n\in\mathbb{N}^{\ast};
\]
\noindent$\left(  b\right)  $\ $\lim\limits_{n\rightarrow\infty}$
$v_{T}^{n,\varepsilon}=v_{T}^{\varepsilon},$ for all $\varepsilon
>0;\smallskip$

\noindent$\left(  c\right)  $\ $v^{n,\varepsilon}$ is weakly--$\ast$
convergent to $v^{\varepsilon}$, i.e. for every continuous function $g:\left[
0,T\right]  \rightarrow\mathbb{R}^{d}$,%
\begin{equation}
\lim_{n\rightarrow\infty}\int_{0}^{T}\left\langle g_{t},dv_{t}^{n,\varepsilon
}\right\rangle =\int_{0}^{T}\left\langle g_{t},dv_{t}^{\varepsilon
}\right\rangle . \label{weak star conv}%
\end{equation}

\end{definition}

\begin{remark}
\label{Remark 2_Annexes}\ \

\noindent1. By the Banach--Steinhaus theorem (uniform boundedness principle)
and Remark \ref{Remark 3_Annexes} we have%
\[
\left\Vert v^{\varepsilon}\right\Vert _{T}+\left\updownarrow v^{\varepsilon
}\right\updownarrow _{T}+\sup_{n\in\mathbb{N}^{\ast}}\left\Vert
v^{n,\varepsilon}\right\Vert _{T}+\sup_{n\in\mathbb{N}^{\ast}}%
\left\updownarrow v^{n,\varepsilon}\right\updownarrow _{T}\leq M_{\varepsilon
}<\infty
\]
and, consequently, there exists $M\geq0$ such that%
\[
\left\Vert x\right\Vert _{T}+\sup_{n\in\mathbb{N}^{\ast}}\left\Vert
x^{n}\right\Vert _{T}\leq M<\infty.
\]

\noindent2. From (\ref{weak star conv}) it follows that for every continuous
function $G:\left[  0,T\right]  \rightarrow\mathbb{R}^{d^{\prime}\times d}:$%
\[
\lim_{n\rightarrow\infty}\int_{0}^{T}G_{t}dv_{t}^{n,\varepsilon}=\int_{0}%
^{T}G_{t}dv_{t}^{\varepsilon}.
\]
In particular for $G=I_{d\times d}$ we have (using the property $\left(
b\right)  $ from Definition \ref{Definition 2_Annexes}):%
\[
\lim_{n\rightarrow\infty}v_{0}^{n,\varepsilon}=v_{0}^{\varepsilon}\;;
\]
\noindent3. Since for all continuous functions $g,\alpha\in C\left(  \left[
0,T\right]  ;\mathbb{R}\right)  $, $g\geq0$, $\left\Vert \alpha\right\Vert
_{T}=1$ and for each coordinate $v^{n,\varepsilon;i}$ , $i=\overline{1,d}$ of
the vector $v^{n,\varepsilon}$,%
\[%
%TCIMACRO{\dint _{0}^{T}}%
%BeginExpansion
{\displaystyle\int_{0}^{T}}
%EndExpansion
g_{t}d\left\updownarrow v^{\varepsilon;i}\right\updownarrow _{t}=\sup_{\alpha
}\int_{0}^{T}g_{t}\alpha_{t}d\,v_{t}^{\varepsilon;i}=\sup_{\alpha}\left[
\lim_{n\rightarrow\infty}\int_{0}^{T}g_{t}\alpha_{t}d\,v_{t}^{n,\varepsilon
;i}\right]  \leq\liminf\limits_{n\rightarrow+\infty}%
%TCIMACRO{\dint _{0}^{T}}%
%BeginExpansion
{\displaystyle\int_{0}^{T}}
%EndExpansion
g_{t}d\left\updownarrow v^{n,\varepsilon;i}\right\updownarrow _{t}\,,
\]
we deduce that the increasing functions $\left\updownarrow v^{n,\varepsilon
;i}\right\updownarrow $ and $\ell^{n,\varepsilon;i}=v^{n,\varepsilon
}-\left\updownarrow v^{n,\varepsilon}\right\updownarrow -v_{0}^{n,\varepsilon
}$ are weakly--$\ast$ convergent. Therefore, by equivalence between the
weak--$\ast$ convergence and the convergence of cumulative distribution
functions, we infer the pointwise convergence outside a countable set
$Q_{\varepsilon;i}\subset(0,T)$ ($Q_{\varepsilon;i}$ is the set of the
discontinuity points of $v^{\varepsilon;i}$). Hence, if we denote
$Q_{\varepsilon}=\cup_{i=\overline{1,d}}Q_{\varepsilon;i}$ , then%
\begin{equation}
\lim_{n\rightarrow\infty}v_{t}^{n,\varepsilon}=v_{t}^{\varepsilon},\;\text{for
all }t\in\left[  0,T\right]  \setminus Q_{\varepsilon}. \label{point converg}%
\end{equation}
Additionally, we easily deduce%
\[
\lim_{n\rightarrow\infty}x_{t}^{n}=x_{t},\;\text{for all }t\in\left[
0,T\right]  \setminus\left(  \cup_{m=1}^{\infty}Q_{1/m}\right)  .
\]

\noindent4. Therefore, based on the Lebesgue dominated convergence theorem, we
deduce immediately the following result (see also \cite[Corollary 2.11]{ja/97}):

Let $\mu$ be an atomless positive measure on $\left[  0,T\right]  $ and
$f:\left[  0,T\right]  \times\mathbb{R}^{m}\times\mathbb{R}^{d}%
\mathbb{\rightarrow R}^{d^{\prime}}$ be a Carath\'{e}odory function (i.e.
$f\left(  \cdot,x,y\right)  $ is Lebesgue measurable for every $x,y$, and
$f\left(  t,\cdot,\cdot\right)  $ is a continuous function $dt$--a.e.
$t\in\left[  0,T\right]  $) such that for every $R>0$, $%
%TCIMACRO{\dint _{0}^{T}}%
%BeginExpansion
{\displaystyle\int_{0}^{T}}
%EndExpansion
f_{R}^{\#}\left(  t\right)  d\mu\left(  t\right)  <\infty$, where $f_{R}%
^{\#}\left(  t\right)  :=\sup_{\left\vert x\right\vert +\left\vert
y\right\vert \leq R}\left\vert f\left(  t,x,y\right)  \right\vert .$ If
$\left\{  x,x^{n}:n\in\mathbb{N}^{\ast}\right\}  \subset\mathcal{C}\left(
\left[  0,T\right]  ;\mathbb{R}^{m}\right)  $ and $\left\{  y,y^{n}%
:n\in\mathbb{N}^{\ast}\right\}  \subset\mathbb{D}\left(  \left[  0,T\right]
;\mathbb{R}^{d}\right)  $ such that $\left\Vert x^{n}-x\right\Vert
_{T}\rightarrow0$ as $n\rightarrow\infty$, and $y^{n}%
\xrightarrow[\mathrm{S}]{\;\;\;\; \;\;}y,$ then%
\[
\lim_{n\rightarrow\infty}%
%TCIMACRO{\dint _{0}^{T}}%
%BeginExpansion
{\displaystyle\int_{0}^{T}}
%EndExpansion
f\left(  t,x_{t}^{n},y_{t}^{n}\right)  d\mu\left(  t\right)  =%
%TCIMACRO{\dint _{0}^{T}}%
%BeginExpansion
{\displaystyle\int_{0}^{T}}
%EndExpansion
f\left(  t,x_{t},y_{t}\right)  d\mu\left(  t\right)  .
\]

\noindent5. Recall from \cite[Theorem 2.13]{ja/97} that the $\sigma$--algebra
generated by the \textrm{S}--topology is the $\sigma$--algebra generated by
the projections $\left\{  \pi_{t}\right\}  _{t\in\left[  0,T\right]  }\,$,
where $\pi_{t}\left(  x\right)  :=x_{t}\,.$
\end{remark}

At the end of this part we present a sufficient condition of $\mathrm{S}%
$--tightness together with the Prohorov and Skorohod representation theorems.
This result follows from \cite[Appendix A]{le/02} and \cite[Theorem 3.4 \&
Definition 3.3]{ja/97}.

\begin{theorem}
\label{Theorem 2_Annexes}Let $(\Omega,\mathcal{F},\mathbb{P})$ be a complete
probability space, $\left\{  \mathcal{F}_{t}\right\}  _{t\geq0}$ be a
filtration and $L^{n}:\Omega\rightarrow\mathbb{D}\left(  \left[  0,T\right]
;\mathbb{R}^{d}\right)  $, $n\in\mathbb{N}^{\ast}$ be a stochastic process
such that $\mathbb{E}\left\vert L_{t}^{n}\right\vert <\infty$ for all
$n\in\mathbb{N}^{\ast}$ and $t$. We define the conditional variation%
\begin{equation}
\mathrm{CV}_{T}(L^{n}):=\sup_{\pi}{\sum_{i=0}^{N-1}{\mathbb{E}}}%
\Big[\big|\mathbb{E}^{{{\mathcal{F}}_{t_{i}}}}{[L_{t_{i+1}}^{n}-L_{t_{i}}%
^{n}]{\big|}\Big]}\label{def cond var}%
\end{equation}
with the supremum taken over all partitions $\pi:t=t_{0}<t_{1}<\cdots
<t_{N}=T.$

If%
\[
\sup_{n\in\mathbb{N}^{\ast}}\Big(\mathrm{CV}_{T}\left(  L^{n}\right)
+\mathbb{E}\big(\sup_{s\in\lbrack0,T]}\left\vert L_{s}^{n}\right\vert
\big)\Big)<\infty,
\]
then the sequence $\left(  L^{n}\right)  _{n\in\mathbb{N}^{\ast}}$ is
$\mathrm{S}$--tight.

Therefore,

$\left(  i\right)  $ for all subsequence $\left(  L^{n_{k}}\right)
_{k\in\mathbb{N}^{\ast}}$, there exists a sub--subsequence $\left(
L^{n_{k_{l}}}\right)  _{l\in\mathbb{N}^{\ast}}$ and stochastic processes
$\left(  \bar{L}^{l}\right)  _{l\in\mathbb{N}^{\ast}}$ and $\bar{L}$ defined
on $\left(  \left[  0,1\right]  ,\mathcal{B}_{\left[  0,1\right]  }%
,d\lambda\right)  $, where $d\lambda$ denotes the Lebesgue measure, such that,%
\[
L^{n_{k_{l}}}\sim\bar{L}^{l},\;\text{for all }l\in\mathbb{N}^{\ast}%
\]
and%
\[
\text{for all }\omega\in\left[  0,1\right]  ,\;\bar{L}^{l}\left(
\omega\right)  \xrightarrow[\mathrm{S}]{\;\;\;\;\;\;\;\;\;}\bar{L}\left(
\omega\right)  \text{, as }n\rightarrow\infty,
\]
where $\sim$ denotes the equality in law of two stochastic processes;

$\left(  ii\right)  $ moreover, there exist a countable set $Q\subset
\lbrack0,T)$ such that, for any $\left\{  t_{1},t_{2},\ldots t_{m}\right\}
\subset\left[  0,T\right]  \setminus Q,$ as $\mathbb{R}^{d\times m}$--valued
random variable,%
\[
(L_{t_{1}}^{n_{k_{l}}},L_{t_{2}}^{n_{k_{l}}},\ldots,L_{t_{m}}^{n_{k_{l}}}%
)\sim(\bar{L}_{t_{1}}^{l},\bar{L}_{t_{2}}^{l},\ldots,\bar{L}_{t_{m}}%
^{l}),\quad\text{for all }l\in\mathbb{N}^{\ast}%
\]
and%
\[
(L_{t_{1}}^{n_{k_{l}}},L_{t_{2}}^{n_{k_{l}}},\ldots,L_{t_{m}}^{n_{k_{l}}%
})\xrightarrow[\lambda-\mathrm{a.s.}]{\;\;\;\;\;\;\;\;\;}(\bar{L}_{t_{1}}%
,\bar{L}_{t_{2}},\ldots,\bar{L}_{t_{m}}),\quad\text{as }l\rightarrow\infty.
\]

\end{theorem}

\subsection{The Helly--Bray theorem}

Very important ingredients for the proof of our main result are the following
Helly--Bray type theorems.

\begin{theorem}
[Helly-Bray]\label{Theorem 1_Annexes}Let $\left\{  k,k^{n}:n\in\mathbb{N}%
^{\ast}\right\}  \subset\mathrm{BV}\left(  \left[  0,T\right]  ;\mathbb{R}%
^{d}\right)  $ be such that%
\[
\left\updownarrow k\right\updownarrow _{T}+\sup\limits_{n\in\mathbb{N}^{\ast}%
}\left\updownarrow k^{n}\right\updownarrow _{T}=M<+\infty.
\]
\noindent$\mathrm{(I)}$ If $\left\{  x,x^{n}:n\in\mathbb{N}^{\ast}\right\}
\subset C\left(  \left[  0,T\right]  ;\mathbb{R}^{d}\right)  $ and $Q$ is a
subset of $[0,T)$, negligible with respect to the Lebesgue measure $dt$, such
that%
\begin{equation}%
\begin{array}
[c]{rl}%
\left(  i\right)  & \lim\limits_{n\rightarrow\infty}\left\Vert x^{n}%
-x\right\Vert _{T}=0,\medskip\\
\left(  ii\right)  & \lim\limits_{n\rightarrow\infty}k_{t}^{n}=k_{t},\text{
for all }t\in\left[  0,T\right]  \setminus Q,
\end{array}
\label{HB0}%
\end{equation}
then%
\begin{equation}
\lim\limits_{n\rightarrow\infty}%
%TCIMACRO{\dint _{s}^{t}}%
%BeginExpansion
{\displaystyle\int_{s}^{t}}
%EndExpansion
\left\langle x_{r}^{n},dk_{r}^{n}\right\rangle =%
%TCIMACRO{\dint _{s}^{t}}%
%BeginExpansion
{\displaystyle\int_{s}^{t}}
%EndExpansion
\left\langle x_{r},dk_{r}\right\rangle ,\;\text{for all }s,t\in\left[
0,T\right]  \setminus Q\text{, }s\leq t. \label{HB1}%
\end{equation}
\noindent$\mathrm{(II)}$ If $\left\{  x,x^{n}:n\in\mathbb{N}^{\ast}\right\}
\subset\mathbb{D}\left(  \left[  0,T\right]  ;\mathbb{R}^{d}\right)  $ and
$\left\{  k,k^{n}:n\in\mathbb{N}^{\ast}\right\}  \subset\mathbb{D}\left(
\left[  0,T\right]  ;\mathbb{R}^{d}\right)  $ such that%
\begin{equation}%
\begin{array}
[c]{rl}%
\left(  i\right)  & \lim\limits_{n\rightarrow\infty}\left\Vert x^{n}%
-x\right\Vert _{T}=0,\medskip\\
\left(  ii\right)  & \lim\limits_{n\rightarrow\infty}\left\Vert k^{n}%
-k\right\Vert _{T}=0,
\end{array}
\label{HB2}%
\end{equation}
then%
\begin{equation}
\lim\limits_{n\rightarrow\infty}%
%TCIMACRO{\dint _{s}^{t}}%
%BeginExpansion
{\displaystyle\int_{s}^{t}}
%EndExpansion
\left\langle x_{r}^{n},dk_{r}^{n}\right\rangle =%
%TCIMACRO{\dint _{s}^{t}}%
%BeginExpansion
{\displaystyle\int_{s}^{t}}
%EndExpansion
\left\langle x_{r},dk_{r}\right\rangle ,\;\text{for all }s,t\in\left[
0,T\right]  . \label{HB3}%
\end{equation}
\noindent$\mathrm{(III)}$ If $\left\{  x,x^{n}:n\in\mathbb{N}^{\ast}\right\}
\subset\mathbb{D}\left(  \left[  0,T\right]  ;\mathbb{R}^{d}\right)  $ and
$\left\{  k,k^{n}:n\in\mathbb{N}^{\ast}\right\}  \subset C\left(  \left[
0,T\right]  ,\mathbb{R}^{d}\right)  $ such that%
\begin{equation}%
\begin{array}
[c]{rl}%
\left(  i\right)  & x^{n}\xrightarrow[\mathrm{S}]{\;\;\;\; \;\;}x,\medskip\\
\left(  ii\right)  & \lim\limits_{n\rightarrow\infty}\left\Vert k^{n}%
-k\right\Vert _{T}=0,
\end{array}
\label{HB4}%
\end{equation}
then there exists a countable subset $Q\subset\lbrack0,T)$ such that%
\begin{equation}
\lim\limits_{n\rightarrow\infty}%
%TCIMACRO{\dint _{s}^{t}}%
%BeginExpansion
{\displaystyle\int_{s}^{t}}
%EndExpansion
\left\langle x_{r}^{n},dk_{r}^{n}\right\rangle =%
%TCIMACRO{\dint _{s}^{t}}%
%BeginExpansion
{\displaystyle\int_{s}^{t}}
%EndExpansion
\left\langle x_{r},dk_{r}\right\rangle ,\;\text{for all }s,t\in\left(  \left[
0,T\right]  \setminus Q\right)  \cup\left\{  r:k_{r}=0\right\}  \text{, }s\leq
t. \label{HB5}%
\end{equation}
In all cases%
\begin{equation}%
%TCIMACRO{\dint _{0}^{T}}%
%BeginExpansion
{\displaystyle\int_{0}^{T}}
%EndExpansion
\left\vert x_{r}\right\vert d\left\updownarrow k\right\updownarrow _{r}%
\leq\liminf\limits_{n\rightarrow+\infty}%
%TCIMACRO{\dint _{0}^{T}}%
%BeginExpansion
{\displaystyle\int_{0}^{T}}
%EndExpansion
\left\vert x_{r}^{n}\right\vert d\left\updownarrow k^{n}\right\updownarrow
_{r} \label{HB6}%
\end{equation}

\end{theorem}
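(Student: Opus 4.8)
\noindent The three convergence assertions will be proved by a single $3\varepsilon$--argument, and \eqref{HB6} deduced afterwards by duality; throughout, $\left\updownarrow k\right\updownarrow _{T}\leq M$ and $\sup_{n}\left\updownarrow k^{n}\right\updownarrow _{T}\leq M$ by hypothesis. For $s\leq t$ and any approximation $\widetilde{x}$ of $x$ I will use the decomposition
\[
\int_{s}^{t}\langle x_{r}^{n},dk_{r}^{n}\rangle-\int_{s}^{t}\langle x_{r},dk_{r}\rangle=\int_{s}^{t}\langle x_{r}^{n}-\widetilde{x}_{r},dk_{r}^{n}\rangle+\Big(\int_{s}^{t}\langle\widetilde{x}_{r},dk_{r}^{n}\rangle-\int_{s}^{t}\langle\widetilde{x}_{r},dk_{r}\rangle\Big)+\int_{s}^{t}\langle\widetilde{x}_{r}-x_{r},dk_{r}\rangle .
\]
The first and third terms are bounded by $\|x^{n}-\widetilde{x}\|_{T}\,M$ and $\|x-\widetilde{x}\|_{T}\,M$ respectively, using only the uniform total--variation bound, so the whole difficulty is to pass to the limit $n\to\infty$ in the middle term $\int_{s}^{t}\langle\widetilde{x}_{r},dk_{r}^{n}\rangle$ with $\widetilde{x}$ kept fixed; the choice of $\widetilde{x}$ is dictated by the regularity of $x$ and $k^{n}$.

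In part $\mathrm{(I)}$, $x$ is continuous, so I take $\widetilde{x}=\widetilde{x}^{\varepsilon}$, the $C^{1}$--mollification of Remark \ref{Remark 1_Annexes} (for which $\|\widetilde{x}^{\varepsilon}\|_{T}\leq\|x\|_{T}$ and, by uniform continuity of $x$ on $[0,T]$, $\|\widetilde{x}^{\varepsilon}-x\|_{T}\to0$ as $\varepsilon\to0$). Integration by parts for the Lebesgue--Stieltjes integral (the bracket $[\widetilde{x}^{\varepsilon},k^{n}]$ vanishing since $\widetilde{x}^{\varepsilon}$ is continuous) turns $\int_{s}^{t}\langle\widetilde{x}^{\varepsilon}_{r},dk_{r}^{n}\rangle$ into $\langle\widetilde{x}^{\varepsilon}_{t},k^{n}_{t}\rangle-\langle\widetilde{x}^{\varepsilon}_{s},k^{n}_{s}\rangle-\int_{s}^{t}\langle k^{n}_{r},\tfrac{d}{dr}\widetilde{x}^{\varepsilon}_{r}\rangle\,dr$; since $k^{n}_{s}\to k_{s}$, $k^{n}_{t}\to k_{t}$ for $s,t\notin Q$ and $\sup_{n}\|k^{n}\|_{T}<\infty$ (Remark \ref{Remark 3_Annexes} at a point $t_{0}\in[0,T]\setminus Q$ together with $\sup_{n}\left\updownarrow k^{n}\right\updownarrow _{T}\leq M$), the bounded convergence theorem handles the $dr$--integral and, reversing the integration by parts, identifies the limit as $\int_{s}^{t}\langle\widetilde{x}^{\varepsilon}_{r},dk_{r}\rangle$; letting $\varepsilon\to0$ in the $3\varepsilon$--estimate finishes $\mathrm{(I)}$. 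In part $\mathrm{(II)}$, $x$ need not be continuous, but now $k^{n}\to k$ \emph{uniformly}; I take $\widetilde{x}=x^{\varepsilon}$, the piecewise--constant approximation with $\|x^{\varepsilon}-x\|_{\infty}\leq\varepsilon$ from Remark \ref{Remark 1_Annexes}. Then $\int_{s}^{t}\langle x^{\varepsilon}_{r},dk_{r}^{n}\rangle$ is a finite linear combination of values $k^{n}_{\cdot}$ and left limits $k^{n}_{\cdot-}$ at the (fixed) partition points, and uniform convergence of c\`adl\`ag functions passes to left limits ($|k^{n}_{a-}-k_{a-}|\leq\|k^{n}-k\|_{T}$), so this sum converges to the analogous combination for $k$, i.e.\ to $\int_{s}^{t}\langle x^{\varepsilon}_{r},dk_{r}\rangle$; no exceptional set is needed, which is why $\mathrm{(II)}$ holds for all $s,t$.

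In part $\mathrm{(III)}$ I invoke Definition \ref{Definition 2_Annexes}: for each $\varepsilon=1/m$ pick $v^{m}:=v^{1/m}$ and $v^{n,m}:=v^{n,1/m}$ of bounded variation with $\|x-v^{m}\|_{T}\leq 1/m$, $\|x^{n}-v^{n,m}\|_{T}\leq 1/m$, $v^{n,m}_{T}\to v^{m}_{T}$ and $v^{n,m}$ weakly--$\ast$ convergent to $v^{m}$; by Remark \ref{Remark 2_Annexes} these satisfy $\sup_{n}\big(\|v^{n,m}\|_{T}+\left\updownarrow v^{n,m}\right\updownarrow _{T}\big)=:M_{m}<\infty$ and $v^{n,m}_{r}\to v^{m}_{r}$ for every $r$ outside a countable set $Q_{m}\subset(0,T)$. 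Put $Q:=\bigcup_{m}Q_{m}$. Taking $\widetilde{x}=v^{n,m}$ on the $n$--side and $v^{m}$ on the limit side, the two outer terms of the decomposition are $\leq M/m$ uniformly in $n$. For the middle term one uses that $k^{n}$ and $k$ are \emph{continuous}, so their Stieltjes measures are atomless and integration by parts (applied twice) gives
\[
\int_{s}^{t}\langle v^{n,m}_{r},dk^{n}_{r}\rangle=\langle v^{n,m}_{t},k^{n}_{t}-k_{t}\rangle-\langle v^{n,m}_{s},k^{n}_{s}-k_{s}\rangle+\int_{s}^{t}\langle v^{n,m}_{r},dk_{r}\rangle-\int_{s}^{t}\langle k^{n}_{r}-k_{r},dv^{n,m}_{r}\rangle ,
\]
where the first two terms are $\leq M_{m}\|k^{n}-k\|_{T}\to0$, the last is $\leq\|k^{n}-k\|_{T}\,M_{m}\to0$, and $\int_{s}^{t}\langle v^{n,m}_{r},dk_{r}\rangle\to\int_{s}^{t}\langle v^{m}_{r},dk_{r}\rangle$ by dominated convergence (the integrands converge $\mu_{k}$--a.e., since $Q_{m}$ is countable and $\mu_{k}$ atomless, and are bounded by $M_{m}$). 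Sending $m\to\infty$ in the resulting estimate yields \eqref{HB5} for $s,t\notin Q$; if $k_{s}=0$ or $k_{t}=0$ the corresponding boundary term $\langle v^{n,m}_{\cdot},k^{n}_{\cdot}-k_{\cdot}\rangle$ and its limit vanish regardless, so these endpoints may be included.

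Finally, for \eqref{HB6} one uses a duality / lower--semicontinuity argument: write $\int_{0}^{T}|x_{r}|\,d\!\left\updownarrow k\right\updownarrow _{r}$ as the supremum of $\sum_{i}\big|\int_{s_{i}}^{s_{i+1}}\langle\psi^{(i)}_{r},dk_{r}\rangle\big|$ over partitions $0=s_{0}<\cdots<s_{N}=T$ and continuous fields $\psi^{(i)}$ with $|\psi^{(i)}_{r}|\leq|x_{r}|$ (obtained by approximating the polar direction of $\mu_{k}$ by continuous fields radially truncated below $|x|$); with the break--points taken outside the exceptional sets involved, pass to the limit $n\to\infty$ on each sub--interval with $\psi^{(i)}$ fixed by parts $\mathrm{(I)}$--$\mathrm{(III)}$, using the elementary bound $\sum_{i}\big|\int_{s_{i}}^{s_{i+1}}\langle\psi^{(i)}_{r},dk^{n}_{r}\rangle\big|\leq\int_{0}^{T}|x^{n}_{r}|\,d\!\left\updownarrow k^{n}\right\updownarrow _{r}+\|x-x^{n}\|_{T}\,M$ (in case $\mathrm{(III)}$ one first replaces $x^{n}$ by $v^{n,m}$ and also invokes the weak--$\ast$ convergence $\mu_{k^{n}}\to\mu_{k}$, which is part $\mathrm{(III)}$ with continuous test functions), and then take the supremum. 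I expect the main obstacle to be precisely this jump bookkeeping: in part $\mathrm{(I)}$ only pointwise convergence of $k^{n}$ is available, so naive Riemann--Stieltjes sums cannot be passed to the limit at the jump points of $k^{n}$ — hence the detour through the $C^{1}$--mollification and integration by parts — and the same care is needed to transfer left limits in $\mathrm{(II)}$, to combine the $\mathrm{S}$--surrogates with the Helly--Bray conclusions in $\mathrm{(III)}$, and to set up the continuous truncations underlying \eqref{HB6}, especially at common discontinuities of $x$ and atoms of $\mu_{k}$.
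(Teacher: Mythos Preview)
Your treatment of $\mathrm{(I)}$--$\mathrm{(III)}$ is essentially the paper's proof: the same $C^{1}$--mollification plus integration by parts in $\mathrm{(I)}$, the same step--function approximation in $\mathrm{(II)}$, and the same $\mathrm{S}$--surrogates $v^{n,m},v^{m}$ combined with integration by parts in $\mathrm{(III)}$. The one organisational difference in $\mathrm{(III)}$ works in your favour: the paper integrates by parts the term $\int_{s}^{t}\langle v^{n,\varepsilon}_{r}-v^{\varepsilon}_{r},dk^{n}_{r}\rangle$, which produces boundary contributions $\langle v^{n,\varepsilon}_{\cdot}-v^{\varepsilon}_{\cdot},k^{n}_{\cdot}\rangle$ that vanish only when $\cdot\notin Q_{\varepsilon}$ or $k_{\cdot}=0$ --- this is where the exceptional set $Q$ comes from. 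Your boundary terms are instead $\langle v^{n,m}_{\cdot},k^{n}_{\cdot}-k_{\cdot}\rangle$, which go to zero for \emph{every} $s,t$ simply because $\|k^{n}-k\|_{T}\to0$ and $\sup_{n}\|v^{n,m}\|_{T}<\infty$; so your split actually yields \eqref{HB5} for all $s,t\in[0,T]$, and you did not need to introduce $Q$.

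For \eqref{HB6} there is a genuine gap in case $\mathrm{(II)}$, precisely the one you anticipate in your last sentence. The representation $\int_{0}^{T}|x_{r}|\,d\!\left\updownarrow k\right\updownarrow_{r}=\sup\sum_{i}\big|\int_{s_{i}}^{s_{i+1}}\langle\psi^{(i)}_{r},dk_{r}\rangle\big|$ with \emph{continuous} $\psi^{(i)}$ satisfying $|\psi^{(i)}_{r}|\leq|x_{r}|$ fails when $x$ jumps at an atom $a$ of $\mu_{k}$: continuity of $\psi^{(i)}$ together with the pointwise constraint forces $|\psi^{(i)}_{a}|\leq|x_{a-}|\wedge|x_{a}|$, so the supremum undercounts the atomic contribution $|x_{a}|\,|\Delta k_{a}|$. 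The paper sidesteps this by factoring the other way: it uses
\[
\int_{0}^{T}|x_{r}|\,d\!\left\updownarrow k\right\updownarrow_{r}=\sup_{\alpha\in C([0,T];\mathbb{R}^{d}),\ \|\alpha\|_{T}\leq1}\int_{0}^{T}|x_{r}|\langle\alpha_{r},dk_{r}\rangle ,
\]
applies the already--proved convergence to the integrand $r\mapsto|x^{n}_{r}|\alpha_{r}$ (continuous in $\mathrm{(I)}$, uniformly convergent c\`adl\`ag in $\mathrm{(II)}$, $\mathrm{S}$--convergent by Proposition~\ref{Proposition 4_Annexes} in $\mathrm{(III)}$), bounds by $\int_{0}^{T}|x^{n}_{r}|\,d\!\left\updownarrow k^{n}\right\updownarrow_{r}$, and takes the supremum over $\alpha$. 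Putting the constraint on $\alpha$ rather than on $|x|\alpha$ is exactly what makes the jump bookkeeping disappear.
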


\begin{proof}
\noindent$\mathrm{(I)}$ Let $\varepsilon>0$ and $\tilde{x}^{\varepsilon}$ be
the $C^{1}$--function as in Remark \ref{Remark 1_Annexes}. Since $x\in
C\left(  \left[  0,T\right]  ,\mathbb{R}^{d}\right)  $, $\left\Vert
x-\tilde{x}^{\varepsilon}\right\Vert _{T}<\boldsymbol{\upmu}_{x}\left(
2\varepsilon\right)  $, where $\boldsymbol{\upmu}_{x}$ is the modulus of
continuity of $x$ on $\left[  0,T\right]  .$ We have%
\[%
\begin{array}
[c]{l}%
\displaystyle\Big|\int_{s}^{t}\left\langle x_{r}^{n},dk_{r}^{n}\right\rangle
-\int_{s}^{t}\left\langle x_{r},dk_{r}\right\rangle \Big|\medskip\\
\displaystyle\leq\Big|\int_{s}^{t}\left\langle x_{r}^{n}-x_{r},dk_{r}%
^{n}\right\rangle \Big|+\Big|\int_{s}^{t}\left\langle x_{r}-\tilde{x}%
_{r}^{\varepsilon},dk_{r}^{n}-dk_{r}\right\rangle \Big|+\Big|\int_{s}%
^{t}\left\langle \tilde{x}_{r}^{\varepsilon},dk_{r}^{n}-dk_{r}\right\rangle
\Big|\medskip\\
\displaystyle\leq\left\Vert x^{n}-x\right\Vert _{T}~M+\boldsymbol{\upmu}%
_{x}\left(  2\varepsilon\right)  M+\Big|\left\langle \tilde{x}_{t}%
^{\varepsilon},k_{t}^{n}-k_{t}\right\rangle -\left\langle \tilde{x}%
_{s}^{\varepsilon},k_{s}^{n}-k_{s}\right\rangle -%
%TCIMACRO{\dint _{s}^{t}}%
%BeginExpansion
{\displaystyle\int_{s}^{t}}
%EndExpansion
\big\langle k_{r}^{n}-k_{r},\frac{d}{dr}\tilde{x}_{r}^{\varepsilon
}\big\rangle dr\Big|.
\end{array}
\]
Since (see Remark \ref{Remark 3_Annexes}) $\sup_{n\in\mathbb{N}^{\ast}%
}\left\Vert k^{n}-k\right\Vert _{T}<\infty$, by Lebesgue dominated convergence
theorem, we have%
\[
\underset{n\rightarrow\infty}{\overline{\lim}}\left[  \Big|\int_{s}%
^{t}\left\langle x_{r}^{n},dk_{r}^{n}\right\rangle -\int_{s}^{t}\left\langle
x_{r},dk_{r}\right\rangle \Big|\right]  \leq\boldsymbol{\upmu}_{x}\left(
\varepsilon\right)  M,\quad\text{for all }\varepsilon>0,
\]
and (\ref{HB1}) clearly follows.$\smallskip$

\noindent$\mathrm{(II)}$ Let $\varepsilon>0$ and $x^{\varepsilon}$ be the
simple function as in Remark \ref{Remark 1_Annexes}. We have%
\begin{equation}%
\begin{array}
[c]{l}%
\displaystyle\Big|\int_{s}^{t}\left\langle x_{r}^{n},dk_{r}^{n}\right\rangle
-\int_{s}^{t}\left\langle x_{r},dk_{r}\right\rangle \Big|\medskip\\
\displaystyle\leq\Big|\int_{s}^{t}\left\langle x_{r}^{n}-x_{r},dk_{r}%
^{n}\right\rangle \Big|+\Big|\int_{s}^{t}\left\langle x_{r}-x_{r}%
^{\varepsilon},dk_{r}^{n}-dk_{r}\right\rangle \Big|+\Big|\int_{s}%
^{t}\left\langle x_{r}^{\varepsilon},dk_{r}^{n}-dk_{r}\right\rangle
\Big|\medskip\\
\displaystyle\leq\left\Vert x^{n}-x\right\Vert _{T}~M+\varepsilon
M+5\left\Vert x\right\Vert _{T}\left\Vert k^{n}-k\right\Vert _{T}%
\,\mathrm{card}\left\{  r\in\pi_{\varepsilon}:s\leq r\leq t\right\}  .
\end{array}
\label{HB7}%
\end{equation}
and (\ref{HB3}) follows.$\smallskip$

\noindent$\mathrm{(III)}$ Let $\varepsilon>0$ and $\left\{  v^{n,\varepsilon
},v^{\varepsilon}:n\in\mathbb{N}^{\ast}\right\}  $ be the sequence of bounded
variation functions associated to the sequence $\left\{  x^{n},x:n\in
\mathbb{N}^{\ast}\right\}  $ by Definition \ref{Definition 2_Annexes}. We
have, for $n\in\mathbb{N}^{\ast},$%
\[%
\begin{array}
[c]{l}%
\Big|%
%TCIMACRO{\dint _{s}^{t}}%
%BeginExpansion
{\displaystyle\int_{s}^{t}}
%EndExpansion
\left\langle x_{r}^{n},dk_{r}^{n}\right\rangle -%
%TCIMACRO{\dint _{s}^{t}}%
%BeginExpansion
{\displaystyle\int_{s}^{t}}
%EndExpansion
\left\langle x_{r},dk_{r}\right\rangle \Big|\leq\Big|%
%TCIMACRO{\dint _{s}^{t}}%
%BeginExpansion
{\displaystyle\int_{s}^{t}}
%EndExpansion
\left\langle x_{r}^{n}-v_{r}^{n,\varepsilon},dk_{r}^{n}\right\rangle
\Big|+\Big|%
%TCIMACRO{\dint _{s}^{t}}%
%BeginExpansion
{\displaystyle\int_{s}^{t}}
%EndExpansion
\left\langle v_{r}^{n,\varepsilon}-v_{r}^{\varepsilon},dk_{r}^{n}\right\rangle
\Big|\medskip\\
\quad+\Big|%
%TCIMACRO{\dint _{s}^{t}}%
%BeginExpansion
{\displaystyle\int_{s}^{t}}
%EndExpansion
\left\langle v_{r}^{\varepsilon}-x_{r},dk_{r}^{n}\right\rangle \Big|+\Big|%
%TCIMACRO{\dint _{s}^{t}}%
%BeginExpansion
{\displaystyle\int_{s}^{t}}
%EndExpansion
\left\langle x_{r},dk_{r}^{n}\right\rangle -%
%TCIMACRO{\dint _{s}^{t}}%
%BeginExpansion
{\displaystyle\int_{s}^{t}}
%EndExpansion
\left\langle x_{r},dk_{r}\right\rangle \Big|\medskip\\
\leq2\varepsilon M+\Big|\left\langle v_{t}^{n,\varepsilon}-v_{t}^{\varepsilon
},k_{t}^{n}\right\rangle -\left\langle v_{s}^{n,\varepsilon}-v_{s}%
^{\varepsilon},k_{s}^{n}\right\rangle -%
%TCIMACRO{\dint _{s}^{t}}%
%BeginExpansion
{\displaystyle\int_{s}^{t}}
%EndExpansion
\left\langle k_{r}^{n},d\left(  v_{r}^{n,\varepsilon}-v_{r}^{\varepsilon
}\right)  \right\rangle \Big|\medskip\\
\quad+\Big|%
%TCIMACRO{\dint _{s}^{t}}%
%BeginExpansion
{\displaystyle\int_{s}^{t}}
%EndExpansion
\left\langle x_{r},dk_{r}^{n}\right\rangle -%
%TCIMACRO{\dint _{s}^{t}}%
%BeginExpansion
{\displaystyle\int_{s}^{t}}
%EndExpansion
\left\langle x_{r},dk_{r}\right\rangle \Big|.
\end{array}
\]
By (\ref{HB1}) and (\ref{point converg}) we have%
\[
\limsup_{n\rightarrow\infty}\Big|%
%TCIMACRO{\dint _{s}^{t}}%
%BeginExpansion
{\displaystyle\int_{s}^{t}}
%EndExpansion
\left\langle x_{r}^{n},dk_{r}^{n}\right\rangle -%
%TCIMACRO{\dint _{s}^{t}}%
%BeginExpansion
{\displaystyle\int_{s}^{t}}
%EndExpansion
\left\langle x_{r},dk_{r}\right\rangle \Big|\leq2\varepsilon M,
\]
for all $s,t\in\left(  \left[  0,T\right]  \setminus Q_{\varepsilon}\right)
\cup\left\{  r:k_{r}=0\right\}  .$ Setting $\varepsilon=\frac{1}{m}%
\rightarrow0$ and $Q=\bigcup_{m\in\mathbb{N}^{\ast}}Q_{1/m}~,$ (\ref{HB5}) follows.

Let $\alpha\in C\left(  \left[  0,T\right]  ;\mathbb{R}^{d}\right)  ,$
$\left\Vert \alpha\right\Vert _{T}\leq1.$ Then by (\ref{HB1})%
\[%
%TCIMACRO{\dint _{0}^{T}}%
%BeginExpansion
{\displaystyle\int_{0}^{T}}
%EndExpansion
\left\vert x_{r}\right\vert \left\langle \alpha_{r},dk_{r}\right\rangle
=\lim_{n\rightarrow\infty}%
%TCIMACRO{\dint _{0}^{T}}%
%BeginExpansion
{\displaystyle\int_{0}^{T}}
%EndExpansion
\left\vert x_{r}^{n}\right\vert \left\langle \alpha_{r},dk_{r}^{n}%
\right\rangle \leq\liminf_{n\rightarrow+\infty}%
%TCIMACRO{\dint _{0}^{T}}%
%BeginExpansion
{\displaystyle\int_{0}^{T}}
%EndExpansion
\left\vert x_{r}^{n}\right\vert d\left\updownarrow k^{n}\right\updownarrow
_{r}%
\]
and passing to $\sup_{\left\Vert \alpha\right\Vert _{T}\leq1}$ we get
(\ref{HB6}).\hfill
\end{proof}

\begin{remark}
The part $\mathrm{(III)}$ of the previous theorem represents a version of
Lemma 3.3 from \cite{bo-ca/04}.
\end{remark}

Finally, we give

\begin{proposition}
\label{Proposition 4_Annexes}Let $f:\left[  0,T\right]  \times\mathbb{R}%
^{d}\mathbb{\rightarrow R}^{d^{\prime}}$ be a locally Lipschitz function. If
$\left\{  x,x^{n}:n\in\mathbb{N}^{\ast}\right\}  \subset\mathbb{D}\left(
\left[  0,T\right]  ;\mathbb{R}^{d}\right)  $ and $x^{n}%
\xrightarrow[\mathrm{S}]{\;\;\;\;
\;\;}x,$ then%
\[
f\left(  \cdot,x_{\cdot}^{n}\right)
\xrightarrow[\mathrm{S}]{\;\;\;\; \;\;}f\left(  \cdot,x_{\cdot}\right)  .
\]

\end{proposition}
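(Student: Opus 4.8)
The plan is to build, for each $\varepsilon>0$, the bounded variation approximants demanded by Definition \ref{Definition 2_Annexes} for the sequence $f(\cdot,x^n_\cdot)$ by simply composing $f$ with the bounded variation approximants of $x^n$ that the hypothesis $x^n\xrightarrow[\mathrm{S}]{\;\;\;}x$ supplies. First I would fix the ambient constants: by Remark \ref{Remark 2_Annexes}.1 there is an $M$, independent of $n$ and $\varepsilon$, with $\|x\|_T+\sup_n\|x^n\|_T\le M$; interpreting ``locally Lipschitz'' as Lipschitz on compact subsets of $[0,T]\times\mathbb R^d$, let $L$ be a Lipschitz constant for $f$ on $[0,T]\times\bar{B}_{M+1}$. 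Given $\varepsilon>0$, I would apply Definition \ref{Definition 2_Annexes} with parameter $\varepsilon':=(1\wedge\varepsilon)/(1+L)$ to obtain $\{v^{\varepsilon'},v^{n,\varepsilon'}\}\subset\mathrm{BV}([0,T];\mathbb R^d)$ with $\|x-v^{\varepsilon'}\|_T\le\varepsilon'$, $\|x^n-v^{n,\varepsilon'}\|_T\le\varepsilon'$, $v^{n,\varepsilon'}_T\to v^{\varepsilon'}_T$ and $v^{n,\varepsilon'}$ converging weakly-$\ast$ to $v^{\varepsilon'}$, and then set $w^{\varepsilon}_t:=f(t,v^{\varepsilon'}_t)$ and $w^{n,\varepsilon}_t:=f(t,v^{n,\varepsilon'}_t)$. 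Since $f$ is continuous and the $v$'s are c\`{a}dl\`{a}g, the $w$'s are c\`{a}dl\`{a}g; and since all functions in sight take values in $\bar{B}_{M+1}$ (using Remark \ref{Remark 2_Annexes}.1 and $\varepsilon'\le1$), the bound $|f(t,y)-f(s,z)|\le L(|t-s|+|y-z|)$ gives $\updownarrow w^{\varepsilon}\updownarrow_T\le L(T+\updownarrow v^{\varepsilon'}\updownarrow_T)$ and $\sup_n\updownarrow w^{n,\varepsilon}\updownarrow_T\le L(T+\sup_n\updownarrow v^{n,\varepsilon'}\updownarrow_T)<\infty$, so the $w$'s lie in $\mathrm{BV}([0,T];\mathbb R^{d'})$. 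Conditions $(a)$ and $(b)$ of Definition \ref{Definition 2_Annexes} are then immediate: $\|f(\cdot,x^n_\cdot)-w^{n,\varepsilon}\|_T\le L\varepsilon'\le\varepsilon$ (and likewise for $x$), and $w^{n,\varepsilon}_T=f(T,v^{n,\varepsilon'}_T)\to f(T,v^{\varepsilon'}_T)=w^{\varepsilon}_T$.

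The substance is condition $(c)$, i.e. $\int_0^T\langle g_t,dw^{n,\varepsilon}_t\rangle\to\int_0^T\langle g_t,dw^{\varepsilon}_t\rangle$ for every continuous $g$. Here I would first record that by Remark \ref{Remark 2_Annexes}.3 one has $v^{n,\varepsilon'}_t\to v^{\varepsilon'}_t$ off a countable set $Q_\varepsilon\subset(0,T)$, hence, by continuity of $f$, $w^{n,\varepsilon}_t\to w^{\varepsilon}_t$ off $Q_\varepsilon$ and also at $t=0$ and $t=T$; in particular $dt$-a.e., while $\sup_{n,t}|w^{n,\varepsilon}_t|\le\sup_n(|w^{n,\varepsilon}_0|+\updownarrow w^{n,\varepsilon}\updownarrow_T)<\infty$. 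I would then approximate $g$ uniformly on $[0,T]$ by $C^1$ functions $g^m$ and, for each fixed $m$, apply the integration-by-parts identity for c\`{a}dl\`{a}g Lebesgue--Stieltjes integrals recalled in the Annexes; since $g^m$ is continuous the covariation term drops and $dg^m=(g^m)'\,dt$, whence
\[
\int_0^T\langle g^m_t,dw^{n,\varepsilon}_t\rangle=\langle g^m_T,w^{n,\varepsilon}_T\rangle-\langle g^m_0,w^{n,\varepsilon}_0\rangle-\int_0^T\langle w^{n,\varepsilon}_t,(g^m)'_t\rangle\,dt.
\]
Letting $n\to\infty$, the two boundary terms converge and the $dt$-integral converges by dominated convergence, so the left side converges to the same expression written with $w^{\varepsilon}$, i.e. to $\int_0^T\langle g^m_t,dw^{\varepsilon}_t\rangle$ by the same identity for $w^{\varepsilon}$. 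Finally $\big|\int_0^T\langle g_t-g^m_t,dw^{n,\varepsilon}_t\rangle\big|\le\|g-g^m\|_T\,\updownarrow w^{n,\varepsilon}\updownarrow_T$ is small uniformly in $n$ for $m$ large (and similarly with $w^{\varepsilon}$), and an $\varepsilon/3$ argument closes $(c)$. Together with $(a)$ and $(b)$ this yields $f(\cdot,x^n_\cdot)\xrightarrow[\mathrm{S}]{\;\;\;}f(\cdot,x_\cdot)$.

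I expect the main obstacle to be precisely step $(c)$: upgrading pointwise convergence of the primitives $w^{n,\varepsilon}$ off a countable set, together with the uniform bound on their total variations, into weak-$\ast$ convergence of the induced Stieltjes measures (a Helly--Bray type passage). The device that makes it go through is to test against a \emph{smooth} $g^m$ via integration by parts, reducing matters to a $dt$-integral where dominated convergence is available, and only then to let $m\to\infty$. Everything else -- that $t\mapsto f(t,v_t)$ is c\`{a}dl\`{a}g of bounded variation, and the bookkeeping between $\varepsilon$ and $L\varepsilon$ -- is routine.
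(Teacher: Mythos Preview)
Your proof is correct and follows essentially the same route as the paper: compose $f$ with the bounded-variation approximants of $x^n$ coming from the \textrm{S}-convergence hypothesis, use the Lipschitz bound of $f$ on a fixed ball (the paper achieves the same effect via a smooth cutoff $f_R=\alpha_R f$) to control total variation and the $\varepsilon$-closeness, and then verify weak-$\ast$ convergence. The only cosmetic difference is that for condition $(c)$ you redo the Helly--Bray argument inline via integration by parts against $C^1$ test functions, whereas the paper simply cites the already-proved part $\mathrm{(I)}$ of Theorem \ref{Theorem 1_Annexes}.
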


\begin{proof}
Let $\varepsilon>0$ and $\left\{  v^{\varepsilon},v^{n,\varepsilon}%
:n\in\mathbb{N}^{\ast}\right\}  $ be the sequence of bounded variation
functions associated to the sequence $\left\{  x^{n}:n\in\mathbb{N}^{\ast
}\right\}  $ by Definition \ref{Definition 2_Annexes}. Let $R\geq1+\left\Vert
x\right\Vert _{T}+\sup_{n\in\mathbb{N}^{\ast}}\left\Vert x^{n}\right\Vert
_{T}$ and $\alpha_{R}\in C^{\infty}\left(  \mathbb{R}^{d}\right)  $ such that
$0\leq\alpha_{r}\leq1$, $\alpha_{R}\left(  x\right)  =1$, if $\left\vert
x\right\vert \leq R$ and $\alpha_{R}\left(  x\right)  =0$, if $\left\vert
x\right\vert \geq R+1.$

We define $f_{R}\left(  t,x\right)  =\alpha_{R}\left(  x\right)  f\left(
t,x\right)  $ which is Lipschitz continuous on $\left[  0,T\right]
\times\mathbb{R}^{d}$ with Lipschitz constant $L_{R}.$ Then $\left\{
f_{R}\left(  \cdot,v_{\cdot}^{n,\varepsilon}\right)  :n\in\mathbb{N}\right\}
$ clearly satisfies:$\smallskip$

\noindent$\bullet\quad$by the Lipschitz property of the function $f_{R}$, we
have $\sup_{n\in\mathbb{N}}\left\updownarrow f_{R}\left(  \cdot,v_{\cdot
}^{n,\varepsilon}\right)  \right\updownarrow _{T}<\infty;\smallskip$

\noindent$\bullet\quad\left\Vert f_{R}\left(  \cdot,x_{\cdot}^{n}\right)
-f_{R}\left(  \cdot,v_{\cdot}^{n,\varepsilon}\right)  \right\Vert _{T}\leq
L_{R}\left\Vert x_{\cdot}^{n}-v_{\cdot}^{n,\varepsilon}\right\Vert _{T}%
\leq\varepsilon L_{R}\,;\smallskip$

\noindent$\bullet\quad\lim_{n\rightarrow\infty}$ $f_{R}\left(  t,v_{t}%
^{n,\varepsilon}\right)  =f_{R}\left(  t,v_{t}^{\varepsilon}\right)  ,$ for
all $t\in\left[  0,T\right]  \setminus Q_{\varepsilon}$, $Q_{\varepsilon}$ is
a countable set from $[0,T);\smallskip$

\noindent$\bullet\quad$from (\ref{HB1}) we see that $\displaystyle\lim
_{n\rightarrow\infty}\int_{0}^{T}\left\langle g_{t},df_{R}\left(
t,v_{t}^{n,\varepsilon}\right)  \right\rangle \longrightarrow\int_{0}%
^{T}\left\langle g_{t},df_{R}\left(  t,v_{t}^{\varepsilon}\right)
\right\rangle $ for all $g\in\mathbb{D(}\left[  0,T\right]  ;\mathbb{R}^{d})$.

Hence $f\left(  \cdot,x_{\cdot}^{n}\right)  =f_{R}\left(  \cdot,x_{\cdot}%
^{n}\right)  \xrightarrow[\mathrm{S}]{\;\;\;\;
\;\;}f_{R}\left(  \cdot,x_{\cdot}\right)  =f\left(  \cdot,x_{\cdot}\right)
.$\hfill
\end{proof}

\subsection{Convex functions\label{Convex functions}}

Let $\varphi:\mathbb{R}^{d}\rightarrow(-\infty,+\infty]$ be a proper convex
lower semicontinuous (l.s.c.) function. Define

\begin{itemize}
\item the domain of $\varphi:$%
\[
\mathrm{Dom}\left(  \varphi\right)  :=\{x\in\mathbb{R}^{d}:\varphi
(x)<+\infty\},
\]

\item the subdifferential of the function $\varphi$ at $x:$%
\[
\partial\varphi(x):=\{\widehat{x}\in\mathbb{R}^{d}:\left\langle \widehat
{x},z-x\right\rangle +\varphi(x)\leq\varphi(z),\;\forall\,z\in\mathbb{R}%
^{d}\},
\]

\item the domain of $\partial\varphi:$%
\[
\mathrm{Dom}\left(  \partial\varphi\right)  :=\left\{  x\in\mathbb{R}%
^{d}:\partial\varphi(x)\neq\emptyset\right\}  .
\]

\end{itemize}

We say that $\varphi$ is proper if $\mathrm{Dom}(\varphi)\neq\emptyset.$

We remark that under the assumptions on $\varphi$ (see for example
\cite{pa-ra/12}-Annex B):

\begin{itemize}
\item $\mathrm{Dom}\left(  \partial\varphi\right)  \neq\emptyset,$

\item $\partial\varphi:\mathbb{R}^{d}\rightrightarrows\mathbb{R}^{d}$ is a
maximal monotone operator,

\item $\mathrm{int}\left(  \mathrm{Dom}\left(  \varphi\right)  \right)
=\mathrm{int}\left(  \mathrm{Dom}\left(  \partial\varphi\right)  \right)  $
and $\overline{\mathrm{Dom}\left(  \partial\varphi\right)  }=\overline
{\mathrm{Dom}\left(  \varphi\right)  },$

\item $\varphi$ is continuous on $\mathrm{int}\left(  \mathrm{Dom}\left(
\varphi\right)  \right)  ,$

\item there exists a set $\left\{  \left(  v_{i},\alpha_{i}\right)
\in\mathbb{R}^{d}\times\mathbb{R}:i\in I\right\}  $ such that%
\[
\varphi(x)=\sup\left\{  \left\langle v_{i},x\right\rangle +\alpha_{i}:i\in
I\right\}  ,
\]

\item (Jensen inequality) if $a,b\in\mathbb{R}$, $a<b$, $y\in L^{\infty
}\left(  a,b;\mathbb{R}^{d}\right)  $ and $\rho\in L^{1}\left(  a,b;\mathbb{R}%
_{+}\right)  $ such that $\int_{a}^{b}\rho\left(  r\right)  dr=1,$ then%
\[
\varphi\left(  \int_{a}^{b}\rho\left(  r\right)  y\left(  r\right)  dr\right)
\leq\int_{a}^{b}\rho\left(  r\right)  \varphi\left(  y\left(  r\right)
\right)  dr.
\]

\end{itemize}

The Moreau regularization $\varphi_{\varepsilon}$ of the convex l.s.c.
function $\varphi$ is defined by%
\[
\varphi_{\varepsilon}(x)=\inf\Big\{\frac{1}{2\varepsilon}|z-x|^{2}%
+\varphi(z);z\in\mathbb{R}^{d}\Big\},\quad\mathbb{\,\varepsilon}>0.
\]
The function $\varphi_{\varepsilon}$ is a convex function of class $C^{1}$ on
$\mathbb{R}^{d}$; the gradient $\nabla\varphi_{\varepsilon\text{ }}$ is a
Lipschitz function on $\mathbb{R}^{d}$ with the Lipschitz constant equals to
$\varepsilon^{-1}$.

If we denote $J_{\varepsilon}\left(  x\right)  =x-\varepsilon\nabla
\varphi_{\varepsilon}(x)$ then on can easily prove (see for example
\cite{pa-ra/12}-Annex B) that for all $x\in\mathbb{R}^{d}$ and $\varepsilon
>0:$%
\[%
\begin{array}
[c]{l}%
\displaystyle\varphi_{\varepsilon}(x)=\dfrac{\varepsilon}{2}|\nabla
\varphi_{\varepsilon}(x)|^{2}+\varphi(J_{\varepsilon}\left(  x\right)
),\medskip\\
\displaystyle\nabla\varphi_{\varepsilon}\left(  x\right)  =\partial
\varphi_{\varepsilon}\left(  x\right)  ,\medskip\\
\displaystyle\nabla\varphi_{\varepsilon}(x)\in\partial\varphi(J_{\varepsilon
}\left(  x\right)  ).
\end{array}
\]
Hence $J_{\varepsilon}\left(  x\right)  =\left(  I+\varepsilon\partial
\varphi\right)  ^{-1}\left(  x\right)  $ and $\nabla\varphi_{\varepsilon
}(x)=\dfrac{x-J_{\varepsilon}\left(  x\right)  }{\varepsilon};$ $\nabla
\varphi_{\varepsilon}$ is called Moreau-Yosida approximation of $\partial
\varphi.$

If $K$ is a nonempty closed convex subset of $\mathbb{R}^{d},$ then the
\textit{convexity indicator function} $I_{K}:\mathbb{R}^{d}\rightarrow
(-\infty,+\infty]$ defined by%
\[
I_{K}\left(  x\right)  =\left\{
\begin{array}
[c]{ll}%
0, & \text{if }x\in K,\smallskip\\
+\infty, & \text{if }x\in\mathbb{R}^{d}\setminus K,
\end{array}
\right.
\]
is a proper convex l.s.c. function and%
\[
\partial I_{K}(x)=\left\{
\begin{array}
[c]{ll}%
0, & \text{if \ }x\in\mathcal{O},\smallskip\\
\mathcal{N}_{\overline{\mathcal{O}}}\left(  x\right)  , & \text{if \ }%
x\in\mathrm{Bd}\left(  \overline{\mathcal{O}}\right)  ,\smallskip\\
\emptyset, & \text{if \ }x\in\mathbb{R}^{d}\setminus\overline{\mathcal{O}},
\end{array}
\right.
\]
where $\mathrm{Bd}\left(  K\right)  $ is the boundary of $K$ and
$\mathcal{N}_{K}\left(  x\right)  $ is the outward normal cone to $K$ at
$x\in\mathrm{Bd}\left(  K\right)  .$

We remark that in the case $d=1$:

\begin{itemize}
\item $\varphi:\mathrm{Dom}(\varphi)\rightarrow\mathbb{R}$ is continuous and
$\varphi:\mathrm{int}\left(  \mathrm{Dom}(\varphi)\right)  \rightarrow
\mathbb{R}$ is locally Lipschitz continuous,

\item the left derivative $\varphi_{-}^{\prime}:\mathrm{Dom}(\varphi
)\rightarrow\left[  -\infty,+\infty\right]  $ and the right derivative
$\varphi_{+}^{\prime}:\mathrm{Dom}(\varphi)\rightarrow\left[  -\infty
,+\infty\right]  $ are well defined increasing functions and%
\[
u\in\left[  \varphi_{-}^{\prime}\left(  r\right)  ,\varphi_{+}^{\prime}\left(
r\right)  \right]  \cap\mathbb{R}\quad\Leftrightarrow\quad u\in\partial
\varphi\left(  r\right)  .
\]

\item if $a,b\in\mathbb{R},$ $a<b$ and $\varphi(x)=I_{[a,b]}(x)$, then
\[
\partial I_{[a,b]}(x)=\left\{
\begin{array}
[c]{ll}%
0, & \text{if\ \thinspace}x\in(a,b),\smallskip\\
(-\infty,0], & \text{if\ \thinspace}x=a,\smallskip\\
\lbrack0,+\infty), & \text{if\ \thinspace}x=b,\smallskip\\
\emptyset, & \text{if\ \thinspace}x\in\mathbb{R}\setminus\lbrack a,b].
\end{array}
\right.
\]

\end{itemize}

We recall a well known result.

\begin{lemma}
\label{anex-l1}Let $B$ be a convex subset of $\mathbb{R}^{d}.$ If
$\mathrm{int}\left(  B\right)  \neq\emptyset$, then%
\[%
\begin{array}
[c]{rl}%
\left(  j\right)  & \displaystyle\left(  1-\varepsilon\right)  \overline
{B}+\varepsilon\,\mathrm{int}\left(  B\right)  \subset\mathrm{int}\left(
B\right)  ,\quad\text{for all }\varepsilon\in(0,1],\medskip\\
\left(  jj\right)  & \displaystyle\mathrm{int}\left(  B\right)  \text{ is a
convex set and }\overline{\mathrm{int}\left(  B\right)  }=\overline{B}\,.
\end{array}
\]

\end{lemma}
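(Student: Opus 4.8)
The plan is to prove $(j)$ directly by an elementary ``ball-shrinking'' estimate and then to deduce $(jj)$ from it in two short steps.

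First I would fix $x\in\overline{B}$, $y\in\mathrm{int}(B)$ and $\varepsilon\in(0,1]$, set $z:=(1-\varepsilon)x+\varepsilon y$, and choose $r>0$ such that every point within Euclidean distance $r$ of $y$ belongs to $B$ (possible since $y\in\mathrm{int}(B)$). The goal is to show that every $w$ with $|w-z|<\varepsilon r/2$ lies in $B$, which then gives $z\in\mathrm{int}(B)$. To this end, using $x\in\overline{B}$ I would pick $x'\in B$ with $(1-\varepsilon)|x-x'|<\varepsilon r/2$ (a vacuous requirement when $\varepsilon=1$) and write
\[
w=(1-\varepsilon)x'+\varepsilon y+\big[(1-\varepsilon)(x-x')+(w-z)\big].
\]
The bracketed vector has norm strictly less than $\varepsilon r$, hence equals $\varepsilon u$ for some $u$ with $|u|<r$, so that $w=(1-\varepsilon)x'+\varepsilon(y+u)$ is a convex combination of the two points $x'\in B$ and $y+u\in B$; the convexity of $B$ then yields $w\in B$. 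This proves $(j)$.

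Next I would derive $(jj)$ from $(j)$. For the convexity of $\mathrm{int}(B)$: given $x,y\in\mathrm{int}(B)$ and $\lambda\in(0,1]$, apply $(j)$ with $\varepsilon=\lambda$, the point $y\in\overline{B}$ and the point $x\in\mathrm{int}(B)$ to conclude $\lambda x+(1-\lambda)y\in\mathrm{int}(B)$; the case $\lambda=0$ is trivial. For the closure identity, $\mathrm{int}(B)\subset B$ immediately gives $\overline{\mathrm{int}(B)}\subset\overline{B}$; conversely I would fix any $y_{0}\in\mathrm{int}(B)$ (nonempty by hypothesis) and, for $x\in\overline{B}$, note that $(1-\varepsilon)x+\varepsilon y_{0}\in\mathrm{int}(B)$ for every $\varepsilon\in(0,1]$ by $(j)$, while $(1-\varepsilon)x+\varepsilon y_{0}\to x$ as $\varepsilon\downarrow0$; hence $x\in\overline{\mathrm{int}(B)}$, so that $\overline{B}\subset\overline{\mathrm{int}(B)}$ and equality holds.

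The argument is entirely elementary and I expect no real obstacle. The only point requiring a little care is extending $(j)$ from the easy case $x\in B$ to a general $x\in\overline{B}$, which is precisely why the auxiliary point $x'$ and the shrinkage factor $\varepsilon r/2$ enter above, together with the degenerate value $\varepsilon=1$, where $z=y\in\mathrm{int}(B)$ and there is nothing to prove.
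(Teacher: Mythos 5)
Your proof is correct: the decomposition $w=(1-\varepsilon)x'+\varepsilon y+\bigl[(1-\varepsilon)(x-x')+(w-z)\bigr]$ with the $\varepsilon r/2$ budget split between the approximation of $x$ by $x'\in B$ and the perturbation $w-z$ is exactly the standard ``accessibility lemma'' argument, and the deduction of $(jj)$ from $(j)$ is routine. The paper states this lemma without proof, citing it as a well-known fact from convex analysis, so there is no in-paper argument to compare against; your write-up supplies the classical one.
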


\begin{proposition}
\label{equiv-subdiff}If $\varphi:\mathbb{R}^{d}\rightarrow]-\infty,+\infty
]$\ is a proper convex l.s.c. function with $\mathrm{int}\left(
\mathrm{Dom}\left(  \varphi\right)  \right)  \neq\emptyset$, $x\in
{\mathcal{{\mathbb{D}}}}([0,T];\mathbb{R}^{d})$, $k\in{\mathcal{{\mathbb{D}}}%
}([0,T];\mathbb{R}^{d})\cap BV([0,T];\mathbb{R}^{d})$ and \ $\ell
\in{\mathcal{{\mathbb{D}}}}([0,T];\mathbb{R})$ is a nondecreasing function,
then the following assertions are equivalent (with convention $\infty\cdot
0=0$):%
\begin{equation}%
\begin{array}
[c]{ll}%
\left(  a_{1}\right)  &
%TCIMACRO{\dint \nolimits_{s}^{t}}%
%BeginExpansion
{\displaystyle\int\nolimits_{s}^{t}}
%EndExpansion
\left\langle z-x(r),dk(r)\right\rangle +%
%TCIMACRO{\dint \nolimits_{s}^{t}}%
%BeginExpansion
{\displaystyle\int\nolimits_{s}^{t}}
%EndExpansion
\varphi(x(r))d\ell\left(  r\right)  \leq\varphi(z)(\ell\left(  t\right)
-\ell\left(  s\right)  ),\medskip\\
& \quad\quad\quad\quad\quad\quad\quad\quad\quad\quad\quad\quad\quad\quad
\quad\forall\,z\in\mathbb{R}^{d},\;\forall\,0\leq s\leq t\leq T,\medskip\\
\left(  a_{2}\right)  &
%TCIMACRO{\dint \nolimits_{s}^{t}}%
%BeginExpansion
{\displaystyle\int\nolimits_{s}^{t}}
%EndExpansion
\left\langle y(r)-x(r),dk(r)\right\rangle +%
%TCIMACRO{\dint \nolimits_{s}^{t}}%
%BeginExpansion
{\displaystyle\int\nolimits_{s}^{t}}
%EndExpansion
\varphi(x(r))d\ell\left(  r\right)  \leq%
%TCIMACRO{\dint \nolimits_{s}^{t}}%
%BeginExpansion
{\displaystyle\int\nolimits_{s}^{t}}
%EndExpansion
\varphi(y(r))d\ell\left(  r\right)  ,\medskip\\
& \quad\quad\forall\,y\in\mathbb{D}\left(  \left[  0,T\right]  ;\mathbb{R}%
^{d}\right)  \;(\text{or }\forall\,y\in C([0,T];\mathbb{R}^{d})),\;\forall
\,0\leq s\leq t\leq T,\medskip\\
\left(  a_{3}\right)  &
%TCIMACRO{\dint \nolimits_{0}^{T}}%
%BeginExpansion
{\displaystyle\int\nolimits_{0}^{T}}
%EndExpansion
\left\langle y(r)-x(r),dk(r)\right\rangle +%
%TCIMACRO{\dint \nolimits_{0}^{T}}%
%BeginExpansion
{\displaystyle\int\nolimits_{0}^{T}}
%EndExpansion
\varphi(x(r))d\ell\left(  r\right)  \leq%
%TCIMACRO{\dint \nolimits_{0}^{T}}%
%BeginExpansion
{\displaystyle\int\nolimits_{0}^{T}}
%EndExpansion
\varphi(y(r))d\ell\left(  r\right)  ,\medskip\\
& \quad\quad\quad\quad\quad\quad\quad\quad\quad\quad\forall\,y\in
\mathbb{D}\left(  \left[  0,T\right]  ;\mathbb{R}^{d}\right)  \;(\text{or
}\forall\,y\in C([0,T];\mathbb{R}^{d})).
\end{array}
\label{an-ineq-fi0}%
\end{equation}
and in all these equivalent cases we say that $dk(r)\in\partial\varphi\left(
x\left(  r\right)  \right)  d\ell\left(  r\right)  $ as measure on $\left[
0,T\right]  $ .
\end{proposition}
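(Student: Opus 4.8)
The plan is to establish $(a_2)\Rightarrow(a_1)$, $(a_2)\Rightarrow(a_3)$ and $(a_3)\Rightarrow(a_1)$ together with the substantial link $(a_1)\Rightarrow(a_2)$, which closes the cycle; the parenthetical versions with $y\in C$ instead of $y\in\mathbb{D}$ then come for free, since the inequality for $y\in\mathbb{D}$ restricts trivially to $y\in C$ while a constant $y\equiv z\in C$ already yields $(a_1)$. Some preliminary bookkeeping first: writing $\varphi=\sup_{i\in I}(\langle v_i,\cdot\rangle+\alpha_i)$ shows $\varphi$ is minorised by an affine function, hence bounded below on the compact set $\overline{x([0,T])}$ (Remark~\ref{Remark 1_Annexes}), so $\int_s^t\varphi(x_r)\,d\ell_r$ is well defined in $(-\infty,+\infty]$; because $k\in BV$, every Lebesgue--Stieltjes integral $\int_s^t\langle\cdot,dk\rangle$ with bounded integrand is finite; and inserting a constant $z\in\mathrm{Dom}(\varphi)$ in $(a_1)$ (or a constant test function in $(a_3)$) gives $\int_0^T\varphi(x_r)\,d\ell_r<\infty$, so the three inequalities are meaningful. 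Then $(a_2)\Rightarrow(a_1)$ follows by taking $y\equiv z$, so that $\int_s^t\varphi(y_r)\,d\ell_r=\varphi(z)(\ell_t-\ell_s)$ (convention $\infty\cdot0=0$; the case $\varphi(z)=+\infty$ is trivial), and $(a_2)\Rightarrow(a_3)$ by taking $s=0$, $t=T$.

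For $(a_3)\Rightarrow(a_1)$ I would localise: given $z$ and $0\le s\le t\le T$, feed into $(a_3)$ the càdlàg function $\tilde y:=x\,\mathbb{1}_{[0,s)}+z\,\mathbb{1}_{[s,t)}+x\,\mathbb{1}_{[t,T]}$ (one checks the right limits at $s$ and $t$). On $[0,s)$ and on $[t,T]$ we have $\tilde y=x$, so the $dk$-- and $d\ell$--contributions of these sets cancel across $(a_3)$, leaving the inequality $(a_1)$ read over $[s,t)$ rather than $(s,t]$. Since $\mathbb{1}_{[s_n,t_n)}\to\mathbb{1}_{(s,t]}$ pointwise as $s_n\downarrow s$, $t_n\downarrow t$, a dominated convergence passage against the finite measures $|\mu_k|$ and $\mu_\ell$ (together with a separate plug-in handling the possible atom of $k$ and $\ell$ at $T$) upgrades this to $(a_1)$ as stated.

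The core is $(a_1)\Rightarrow(a_2)$, in two stages. First, for a step function $y=\sum_i z_i\,\mathbb{1}_{(t_i,t_{i+1}]}$ subordinate to a partition $s=t_0<\dots<t_N=t$ of $(s,t]$, apply $(a_1)$ on each $(t_i,t_{i+1}]$ with $z=z_i$ and sum; additivity of the Lebesgue--Stieltjes integral over the partition reproduces $(a_2)$ for $y$ (values $z_i\notin\mathrm{Dom}(\varphi)$ merely enlarge the right-hand side). Second, a general $y\in\mathbb{D}$ (or $y\in C$) is approximated by regularisations $v_R^\delta$ of exactly the type used in the proof of Lemma~\ref{technical lemma 3}: replace $y$ by $\frac{R}{1+R}\rho_R(\cdot)\,y(\cdot)+\frac1{1+R}u_0$ with $u_0\in\mathrm{int}(\mathrm{Dom}(\varphi))$ and $\rho_R(r)=\mathbb{1}_{[0,R]}\big(|y(r)|+\varphi(y(r))\big)$, then mollify. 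By Lemma~\ref{anex-l1}$(j)$ each $v_R^\delta$ takes values in a bounded subset of $\mathrm{int}(\mathrm{Dom}(\varphi))$, so it is continuous with $\varphi(v_R^\delta)$ continuous and bounded; being continuous it is a uniform limit of step functions, and since $\varphi$ is uniformly continuous on a compact neighbourhood of the range of $v_R^\delta$, stage one passes to the limit and gives $(a_2)$ with $v_R^\delta$ in place of $y$. Letting $\delta\searrow0$ then $R\nearrow\infty$, Jensen's inequality (as in (\ref{technical ineq 2})--(\ref{technical ineq 3})) dominates $\int_s^t\varphi(v_R^\delta)\,d\ell$ by a quantity tending to $\int_s^t\varphi(y)\,d\ell$ by monotone and dominated convergence (the case $\int_s^t\varphi(y)\,d\ell=+\infty$ being trivial), while $\int_s^t\langle v_R^\delta-x,dk\rangle\to\int_s^t\langle y-x,dk\rangle$ because $v_R^\delta\to y$ boundedly and $k\in BV$; this gives $(a_2)$ for $y$, and the cycle is closed.

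The step I expect to be the genuine obstacle is this passage from step functions to a general $y$. Because $\varphi$ is only lower semicontinuous and, in dimension above one, need not be continuous up to the boundary of its domain, one cannot simply push $\varphi$ through a pointwise approximation of $y$; the whole point of the truncate--pull-towards-$u_0$--then-mollify device is to force the approximants into the interior of $\mathrm{Dom}(\varphi)$, where $\varphi$ is locally Lipschitz, while keeping a one-sided (Jensen) bound robust under the double limit. A secondary difficulty is bookkeeping with the atoms of $k$ and $\ell$ (when cancelling matching pieces and when checking that the regularisation passes through the $dk$-- and $d\ell$--integrals in the limit). One cannot shortcut this via a Radon--Nikodym density $dk_r=u_r\,d\ell_r$ with $u_r\in\partial\varphi(x_r)$: that representation fails in general, since a singular part of $k$ can be absorbed when $x$ lies on the boundary of $\mathrm{Dom}(\varphi)$, so the approximation argument really is the right tool.
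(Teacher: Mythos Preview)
Your overall plan matches the paper's: the trivial directions $(a_2)\Rightarrow(a_1)$ and $(a_2)\Rightarrow(a_3)$ are the same, and the substantial direction $(a_1)\Rightarrow(a_2)$ is done, as in the paper, by partitioning, plugging the values of a regularised test function into $(a_1)$, summing, and passing to the limit. Two points deserve correction, though.

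First, your handling of the continuous version of $(a_3)$ does not close. Your proof of $(a_3)\Rightarrow(a_1)$ feeds in the \emph{discontinuous} test function $\tilde y=x\,\mathbb{1}_{[0,s)}+z\,\mathbb{1}_{[s,t)}+x\,\mathbb{1}_{[t,T]}$, hence establishes only $(a_3^{\mathbb{D}})\Rightarrow(a_1)$; and plugging a constant $y\equiv z$ into $(a_3^{C})$ returns $(a_1)$ only for the single interval $s=0$, $t=T$. So $(a_3^{C})$ is not shown equivalent to the rest. The paper avoids this by proving $(a_3)\Rightarrow(a_1)$ with a \emph{continuous} test function: it takes $y(r)=(1-\alpha_n(r))x_{\varepsilon,\lambda}(r)+\alpha_n(r)z$ with $\alpha_n\in C([0,T];[0,1])$, $\alpha_n\to\mathbb{1}_{(s,t]}$ pointwise, and $x_{\varepsilon,\lambda}$ a continuous interior-valued approximation of $x$ (shift towards $x_0\in\mathrm{int}(\mathrm{Dom}(\varphi))$, then mollify). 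Dominated convergence in $n$, $\varepsilon$, $\lambda$ then gives $(a_1)$ on every $[s,t]$, using only continuous $y$.

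Second, in your limit passage for $(a_1)\Rightarrow(a_2)$ you claim $v_R^\delta\to y$ boundedly, hence $\int_s^t\langle v_R^\delta-x,dk\rangle\to\int_s^t\langle y-x,dk\rangle$. This fails at points $r$ with $\varphi(y(r))=+\infty$: there $\rho_R(r)=0$ for every $R$, so the pointwise limit of $v_R^\delta$ (after $\delta\searrow 0$, $R\nearrow\infty$) is $0$, not $y(r)$. The hypothesis $\int_s^t\varphi(y)\,d\ell<\infty$ only gives $\varphi(y)<\infty$ $d\ell$-a.e., not $d\!\left\updownarrow k\right\updownarrow$-a.e., so an atom of $k$ at such a point breaks the $dk$-convergence. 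The paper's regularisation is simpler and sidesteps this: it uses $y_\varepsilon=(1-\varepsilon)y+\varepsilon x_0$ (no $\varphi$-dependent truncation). Then $y_\varepsilon\to y$ uniformly, so $\int\langle y_\varepsilon-x,dk\rangle\to\int\langle y-x,dk\rangle$ trivially, while $y_\varepsilon$ takes values in $\mathrm{int}(\mathrm{Dom}(\varphi))$ by Lemma~\ref{anex-l1}, so $\varphi\circ y_\varepsilon$ is c\`adl\`ag and the Riemann-sum limit (\ref{aprox-intr}-b) applies; the bound $\varphi(y_\varepsilon)\le(1-\varepsilon)\varphi(y)+\varepsilon\varphi(x_0)$ and $\varepsilon\to0$ finish. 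Replacing your $v_R^\delta$ device by this lighter one fixes the gap.
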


\begin{proof}
There exist $v\in\mathbb{R}^{d}$ and $a\in\mathbb{R}$ such that $\varphi
\left(  x\right)  +\left\langle v,x\right\rangle +a\geq0$, hence we can assume
$\varphi\geq0$, since the inequalities from (\ref{an-ineq-fi0}) are equivalent
to the corresponding inequalities with $\varphi$ replaced by $\varphi\left(
\cdot\right)  +\left\langle v,\cdot\right\rangle +a$ and $dk(r)$ replaced by
$dk(r)+vd\ell(r).$

We extend $y\left(  t\right)  =y\left(  0\right)  $ for $t\leq0$ and $y\left(
t\right)  =y\left(  T\right)  $ for $t\geq T.$ The same extension will be
considered for the functions $x$ and $k$ and $\ell.$

In all cases we can assume that
\[%
%TCIMACRO{\dint \nolimits_{0}^{T}}%
%BeginExpansion
{\displaystyle\int\nolimits_{0}^{T}}
%EndExpansion
\varphi(y(r))d\ell\left(  r\right)  <\infty,
\]
and consequently
\[%
%TCIMACRO{\dint \nolimits_{0}^{T}}%
%BeginExpansion
{\displaystyle\int\nolimits_{0}^{T}}
%EndExpansion
\varphi(x(r))d\ell\left(  r\right)  <\infty.
\]
Therefore we can assume $y\left(  t\right)  \in\overline{\mathrm{Dom}\left(
\varphi\right)  }$ and $x\left(  t\right)  \in\overline{\mathrm{Dom}\left(
\varphi\right)  }$ for all $t\in\left[  0,T\right]  .$ Also we can consider
only the case $z\in\mathrm{Dom}\left(  \varphi\right)  .$\medskip

We will show that $\;\left(  a_{1}\right)  \Leftrightarrow\left(
a_{3}\right)  $ and $\left(  a_{1}\right)  \Leftrightarrow\left(
a_{2}\right)  $.\medskip

$\left(  a_{1}\right)  \Rightarrow\left(  a_{3}\right)  :$

Let $y\in{\mathcal{{\mathbb{D}}}}\left(  \left[  0,T\right]  ;\mathbb{R}%
^{d}\right)  $ (respectively $y\in C\left(  \left[  0,T\right]  ;\mathbb{R}%
^{d}\right)  $).

Since $\mathrm{int}\left(  \mathrm{Dom}\left(  \varphi\right)  \right)
\neq\emptyset,$%
\[
\left(  1-\varepsilon\right)  \overline{\mathrm{Dom}\left(  \varphi\right)
}+\varepsilon~\mathrm{int}\left(  \mathrm{Dom}\left(  \varphi\right)  \right)
\subset\mathrm{int}\left(  \mathrm{Dom}\left(  \varphi\right)  \right)
,\quad\text{for all }\varepsilon\in(0,1].
\]
Consequently for any $x_{0}\in\mathrm{int}\left(  \mathrm{Dom}\left(
\varphi\right)  \right)  ,$ fixed, $t\longmapsto y_{\varepsilon}\left(
t\right)  :=\left(  1-\varepsilon\right)  y\left(  t\right)  +\varepsilon
~x_{0}:\left[  0,T\right]  \rightarrow\mathrm{int}\left(  \mathrm{Dom}\left(
\varphi\right)  \right)  $ is a c\`{a}dl\`{a}g (respectively continuous)
function with $y_{\varepsilon}\left(  t-\right)  \in\mathrm{int}\left(
\mathrm{Dom}\left(  \varphi\right)  \right)  $ for all $t\in\left[
0,T\right]  .$ Therefore $t\longmapsto\varphi\left(  y_{\varepsilon}\left(
t\right)  \right)  :\left[  0,T\right]  \rightarrow\lbrack0,\infty)$ is a
c\`{a}dl\`{a}g (respectively continuous) function.

Let $0=t_{0}<t_{1}<\cdots<t_{n}=T$ a partition of $\left[  0,T\right]  $ such
that $t_{i}-t_{i-1}=\frac{T}{n}$ for all $i\in\overline{1,n}.$

We write $\left(  a_{1}\right)  $ for $z=y_{\varepsilon}\left(  t_{i}\right)
$ and $s=t_{i-1},$ $t=t_{i}$ and we add the inequalities member by member for
$i\in\overline{1,n}$. It follows%
\[%
\begin{array}
[c]{l}%
\displaystyle\sum_{i=1}^{n}\left\langle y_{\varepsilon}\left(  t_{i}\right)
,k\left(  t_{i}\right)  -k\left(  t_{i-1}\right)  \right\rangle -\int_{0}%
^{T}\left\langle x(r),dk(r)\right\rangle +\int_{0}^{T}\varphi\left(  x\left(
r\right)  \right)  dr\medskip\\
\displaystyle\leq\sum_{i=1}^{n}\varphi\left(  y_{\varepsilon}\left(
t_{i}\right)  \right)  \left(  \ell\left(  t_{i}\right)  -\ell\left(
t_{i-1}\right)  \right)  .
\end{array}
\]
Passing to the limit as $n\rightarrow\infty$, we obtain using
(\ref{aprox-intr}-b),%
\[%
\begin{array}
[c]{l}%
\displaystyle\int_{0}^{T}\left\langle y_{\varepsilon}\left(  r\right)
,dk\left(  r\right)  \right\rangle -\int_{0}^{T}\left\langle
x(r),dk(r)\right\rangle +\int_{0}^{T}\varphi\left(  x\left(  r\right)
\right)  dr\medskip\\
\displaystyle\leq\int_{0}^{T}\varphi\left(  y_{\varepsilon}\left(  r\right)
\right)  d\ell\left(  r\right)  .
\end{array}
\]
Therefore%
\[%
\begin{array}
[c]{l}%
\displaystyle\left(  1-\varepsilon\right)  \int_{0}^{T}\left\langle y\left(
r\right)  ,dk\left(  r\right)  \right\rangle +\varepsilon\left\langle
x_{0},k\left(  T\right)  -k\left(  0\right)  \right\rangle -\int_{0}%
^{T}\left\langle x(r),dk(r)\right\rangle +\int_{0}^{T}\varphi\left(  x\left(
r\right)  \right)  dr\medskip\\
\displaystyle\leq\left(  1-\varepsilon\right)  \int_{0}^{T}\varphi\left(
y\left(  r\right)  \right)  d\ell\left(  r\right)  +\varepsilon\varphi\left(
x_{0}\right)  \left[  \ell\left(  T\right)  -\ell\left(  0\right)  \right]
\end{array}
\]
(in the second member we also used the convexity property of $\varphi).$

Passing to the limit as $\varepsilon\rightarrow0$, $\left(  a_{3}\right)  $
follows.\medskip

$\left(  a_{3}\right)  \Rightarrow\left(  a_{1}\right)  :$

Let $x_{0}\in\mathrm{int}\left(  \mathrm{Dom}\left(  \varphi\right)  \right)
.$ Let$\,\;\alpha_{n}\in$\ $C([0,T];\ \mathbb{R})$, $0\leq\alpha_{n}\leq1$,
and $\lim_{n\rightarrow\infty}\alpha_{n}\left(  r\right)  =\mathbb{1}%
_{(s,t]}\left(  r\right)  $ for all $r\in\left[  0,T\right]  .$ Let
$\varepsilon,\lambda\in\left(  0,1\right)  ,$ $x_{\lambda}\left(  r\right)
=\left(  1-\lambda\right)  x\left(  r\right)  +\lambda x_{0}$ and
$x_{\varepsilon,\lambda}\left(  r\right)  =\frac{1}{\varepsilon}\int
_{r}^{r+\varepsilon}x_{\lambda}\left(  u\right)  du,$ $r\in\left[  0,T\right]
.$ We have for all $r\in\left[  0,T\right]  :$%
\[
x_{\lambda}\left(  r\right)  ,x_{\varepsilon,\lambda}\left(  r\right)
\in\mathrm{int}\left(  \mathrm{Dom}\left(  \varphi\right)  \right)
\quad\text{and}\quad\lim_{\varepsilon\rightarrow0}x_{\varepsilon,\lambda
}\left(  r\right)  =x_{\lambda}\left(  r\right)
\]
So, since $\varphi:\mathrm{int}\left(  \mathrm{Dom}\left(  \varphi\right)
\right)  \rightarrow\lbrack0,\infty)$ is continuous, we have for all
$r\in\left[  0,T\right]  :$%
\[
\lim_{\varepsilon\rightarrow0}\varphi\left(  x_{\varepsilon,\lambda}\left(
r\right)  \right)  =\varphi\left(  x_{\lambda}\left(  r\right)  \right)
\]
and%
\[
0\leq\varphi\left(  x_{\varepsilon,\lambda}\left(  r\right)  \right)
\leq\frac{1}{\varepsilon}\int_{r}^{r+\varepsilon}\varphi\left(  x_{\lambda
}\left(  r\right)  \right)  du\leq\sup_{r\in\left[  0,T\right]  }%
\varphi\left(  x_{\lambda}\left(  r\right)  \right)  <\infty,
\]
because $\left\{  x_{\lambda}\left(  r\right)  :r\in\left[  0,T\right]
\right\}  $ is a bounded subset of $\mathrm{int}\left(  \mathrm{Dom}\left(
\varphi\right)  \right)  $.

In $\left(  a_{3}\right)  $ we replace $y$ by the continuous function
$r\mapsto(1-\alpha_{n}\left(  r\right)  )x_{\varepsilon,\lambda}\left(
r\right)  +\alpha_{n}\left(  r\right)  z.$ It follows
\begin{align*}
&  \int\nolimits_{0}^{T}\left\langle (1-\alpha_{n}\left(  r\right)
)x_{\varepsilon,\lambda}\left(  r\right)  +\alpha_{n}\left(  r\right)
z-x\left(  r\right)  ,dk(r)\right\rangle +\int\nolimits_{0}^{T}\varphi\left(
x\left(  r\right)  \right)  d\ell\left(  r\right) \\
&  \leq\int\nolimits_{0}^{T}\left(  \left(  1-\alpha_{n}\left(  r\right)
\right)  \varphi\left(  x_{\varepsilon,\lambda}\left(  r\right)  \right)
+\alpha_{n}\left(  r\right)  \varphi\left(  z\right)  \right)  d\ell\left(
r\right)
\end{align*}
By the Lebesgue dominated convergence theorem as $\varepsilon\rightarrow0$ and
$n\rightarrow\infty$ we infer%
\begin{align*}
&  \int_{0}^{T}(1-\mathbb{1}_{(s,t]}\left(  r\right)  )\left\langle
x_{\lambda}\left(  r\right)  -x\left(  r\right)  ,dk(r)\right\rangle +\int
_{0}^{T}\left\langle z-x\left(  r\right)  ,dk(r)\right\rangle \\
&  \leq\int\nolimits_{0}^{T}\left(  1-\mathbb{1}_{(s,t]}\left(  r\right)
\right)  \varphi\left(  x_{\lambda}\left(  r\right)  \right)  d\ell\left(
r\right)  +\int_{0}^{T}\varphi\left(  z\right)  d\ell\left(  r\right) \\
&  \leq%
%TCIMACRO{\dint _{0}^{T}}%
%BeginExpansion
{\displaystyle\int_{0}^{T}}
%EndExpansion
\left(  1-\mathbb{1}_{(s,t]}\left(  r\right)  \right)  \left[  \left(
1-\lambda\right)  \varphi\left(  x\left(  r\right)  \right)  +\lambda
\varphi\left(  x_{0}\right)  \right]  d\ell\left(  r\right)  +\varphi\left(
z\right)  \left[  \ell\left(  t\right)  -\ell\left(  s\right)  \right]
\end{align*}
Now, once again by the Lebesgue dominated convergence theorem, passing to the
limit as $\lambda\rightarrow0$ we get $\left(  a_{1}\right)  .$\medskip

$\left(  a_{2}\right)  \Rightarrow\left(  a_{1}\right)  :$ evidently
holds.\medskip

$\left(  a_{1}\right)  \Rightarrow\left(  a_{2}\right)  :$ clearly it can be
shown in the same manner as $\left(  a_{1}\right)  \Rightarrow\left(
a_{3}\right)  .$\hfill
\end{proof}

\section{Erratum}

Below we present briefly the updated statement of Proposition 13 from
\cite{ma-ra/10}. The only difference is due to the presence of the integral
with respect to the measure generated by the total variation $\updownarrow
{\hspace{-0.15cm}A^{t,x}-A^{t^{\prime},x^{\prime}}\hspace{-0.15cm}%
}\updownarrow$. The proof uses the same techniques and inequalities as
declared in \cite[Proposition 13]{ma-ra/10}.

\begin{proposition}
Under assumptions (\ref{Assumpt. 1})--(\ref{Assumpt. 6}), we have%
\begin{equation}
\mathbb{E}\underset{s\in\left[  0,T\right]  }{\sup}e^{\mu A_{s}^{t,x}}%
|Y_{s}^{t,x}|^{2}\leq C\left(  T\right)  \label{mg Y}%
\end{equation}
and%
\begin{equation}%
\begin{array}
[c]{l}%
\displaystyle\mathbb{E}\underset{s\in\left[  0,T\right]  }{\sup}e^{\mu
A_{s}^{t,x}}|Y_{s}^{t,x}-Y_{s}^{t^{\prime},x^{\prime}}|^{2}\leq\mathbb{E}%
\Big[e^{\mu A_{T}^{t,x}}\big |h(X_{T}^{t,x})-h(X_{T}^{t^{\prime},x^{\prime}%
})\big |^{2}\medskip\\
\displaystyle+\int_{0}^{T}e^{\mu A_{r}^{t,x}}\big |{\mathbb{1}_{[t,T]}%
}(r)f(r,X_{r}^{t,x},Y_{r}^{t,x},Z_{r}^{t,x})-{\mathbb{1}_{[t^{\prime},T]}%
}(r)f(r,X_{r}^{t^{\prime},x^{\prime}},Y_{r}^{t,x},Z_{r}^{t,x})\big |^{2}%
dr\medskip\\
\displaystyle+\int_{0}^{T}e^{\mu A_{r}^{t,x}}\big |{\mathbb{1}_{[t,T]}%
}(r)g(r,X_{r}^{t,x},Y_{r}^{t,x})-{\mathbb{1}_{[t^{\prime},T]}}(r)g(r,X_{r}%
^{t^{\prime},x^{\prime}},Y_{r}^{t,x})\big |^{2}dA_{r}^{t,x}\medskip\\
\displaystyle+\int_{0}^{T}e^{\mu A_{r}^{t,x}}{\mathbb{1}_{[t,T]}%
}(r)\big |g(r,X_{r}^{t,x},Y_{r}^{t,x})\big |^{2}d\updownarrow{\hspace
{-0.15cm}A^{t,x}-A^{t^{\prime},x^{\prime}}\hspace{-0.15cm}}\updownarrow
_{r}\Big].
\end{array}
\label{cont Y}%
\end{equation}

\end{proposition}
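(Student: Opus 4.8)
The plan is the standard It\^o-formula-plus-monotonicity scheme for backward stochastic variational inequalities, the only genuinely new point being the bookkeeping forced by the fact that $A^{t,x}$ and $A^{t^{\prime},x^{\prime}}$ generate different measures. Estimate \eqref{mg Y} is in fact already contained in the a priori bounds quoted right after Theorem 9 of \cite{ma-ra/10} in Section 2, so one only recalls the argument: apply It\^o's formula to $s\mapsto e^{\mu A_{s}^{t,x}}|Y_{s}^{t,x}|^{2}$ on $[s,T]$; by the subdifferential inequalities \eqref{FBSDE}-$(d,e)$ with test value $v=0$ together with \eqref{Assumpt. 4}-$(ii)$, the terms $\int\langle Y_{r}^{t,x},dK_{r}^{1,t,x}\rangle$ and $\int\langle Y_{r}^{t,x},dK_{r}^{2,t,x}\rangle$ dominate $\int\varphi(Y_{r}^{t,x})\,dr\ge0$ and $\int\psi(Y_{r}^{t,x})\,dA_{r}^{t,x}\ge0$ and are discarded; the forcing terms are bounded by the linear growth \eqref{Assumpt. 3}-$(ii,iv)$ and Young's inequality, with $\mu$ chosen large enough that the $\mu\,e^{\mu A^{t,x}}|Y^{t,x}|^{2}\,dA^{t,x}$ term absorbs the $g$-contribution; one then uses $|h(X_{T}^{t,x})|\le M$ from \eqref{mg X}, $\mathbb{E}[e^{\kappa A_{T}^{t,x}}]\le C$ from \eqref{cont X}-$(c)$, the Burkholder--Davis--Gundy inequality for the martingale part and a backward Gronwall argument in $s$, obtaining a constant $C(T)$ independent of $(t,x)$.

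For \eqref{cont Y}, set $\Delta Y:=Y^{t,x}-Y^{t^{\prime},x^{\prime}}$, $\Delta M:=M^{t,x}-M^{t^{\prime},x^{\prime}}$ and $\Delta K^{i}:=K^{i,t,x}-K^{i,t^{\prime},x^{\prime}}$, extend all processes trivially before $t$ (resp.\ $t^{\prime}$) and apply It\^o's formula to $s\mapsto e^{\mu A_{s}^{t,x}}|\Delta Y_{s}|^{2}$ on $[s,T]$. The $\partial\varphi$-terms $2\int e^{\mu A_{r}^{t,x}}\langle\Delta Y_{r},dK_{r}^{1,t,x}-dK_{r}^{1,t^{\prime},x^{\prime}}\rangle$ are nonnegative by monotonicity of $\partial\varphi$ (both inclusions holding $dr$-a.e.) and are dropped. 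The $\partial\psi$-terms and the $g\,dA$-cross-terms cannot be treated the same way, because $V^{t,x}_{r}\in\partial\psi(Y^{t,x}_{r})$ holds only $dA^{t,x}$-a.e.\ while $V^{t^{\prime},x^{\prime}}_{r}\in\partial\psi(Y^{t^{\prime},x^{\prime}}_{r})$ holds only $dA^{t^{\prime},x^{\prime}}$-a.e.; instead one uses each subgradient inequality against its own measure and regroups via $dA^{t^{\prime},x^{\prime}}=dA^{t,x}-d(A^{t,x}-A^{t^{\prime},x^{\prime}})$. The pieces that end up integrated against the common measure $dA^{t,x}$ are handled as usual: monotonicity of $\partial\psi$ (again nonnegative, dropped), monotonicity of $f,g$ in $y$ from \eqref{Assumpt. 3}-$(i,iii)$ (and, if $f$ depends on $z$, its Lipschitz dependence on $z$, handled exactly as in \cite{ma-ra/10}), and Young's inequality, which together produce the second and third lines of \eqref{cont Y} plus a remainder $2\beta\int e^{\mu A_{r}^{t,x}}|\Delta Y_{r}|^{2}(dr+dA_{r}^{t,x})$; the pieces integrated against the signed measure $d(A^{t,x}-A^{t^{\prime},x^{\prime}})$, after bounding by the total variation $d\updownarrow A^{t,x}-A^{t^{\prime},x^{\prime}}\updownarrow_{r}$ and using the uniform ($L^{p}$, all $p$) bound \eqref{mg Y} on $\sup_{s}|Y^{t,x}_{s}|$ and $\sup_{s}|Y^{t^{\prime},x^{\prime}}_{s}|$, produce exactly the last line of \eqref{cont Y}, which is the term missing in \cite{ma-ra/10}.

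Collecting everything and choosing $\mu$ large enough that $\mu\,e^{\mu A^{t,x}}|\Delta Y|^{2}\,dA^{t,x}$ dominates the $dA^{t,x}$-remainder, only a $2\beta\int_{s}^{T}e^{\mu A_{r}^{t,x}}|\Delta Y_{r}|^{2}\,dr$ term is left on the right, so the estimate closes by a backward Gronwall inequality in $s$ (or through the deterministic time change $Q_{s}:=s+A^{t,x}_{s\wedge T}$ used in the proof of Theorem \ref{cont of Y and u}, see also \cite[Proposition 6.69]{pa-ra/12}); applying Burkholder--Davis--Gundy and Young to the martingale cross-term $-2\int e^{\mu A_{r}^{t,x}}\langle\Delta Y_{r},d\Delta M_{r}\rangle$ then lets $\sup_{s}$ be brought under the expectation on the left, giving \eqref{cont Y}. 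I expect the $\partial\psi$/$g\,dA$ step to be the main obstacle: one must resist writing a single pointwise monotonicity inequality for $V^{t,x}-V^{t^{\prime},x^{\prime}}$ (which would be legitimate only on the intersection of two possibly mutually singular full-measure sets, and whose illegitimate use is precisely what produced the gap in \cite{ma-ra/10}), and one must peel the signed-measure contribution off cleanly enough that the integrand multiplying $d\updownarrow A^{t,x}-A^{t^{\prime},x^{\prime}}\updownarrow_{r}$ is exactly $e^{\mu A_{r}^{t,x}}\mathbb{1}_{[t,T]}(r)\,|g(r,X_{r}^{t,x},Y_{r}^{t,x})|^{2}$, with the spare $\Delta Y$-factor absorbed through \eqref{mg Y}.
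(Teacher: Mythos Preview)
Your proposal is correct and follows exactly the route the paper indicates: the paper gives no detailed argument here, stating only that ``the proof uses the same techniques and inequalities as declared in \cite[Proposition 13]{ma-ra/10}'', with the sole novelty being the additional total-variation term arising from the mismatch of the integrators $dA^{t,x}$ and $dA^{t',x'}$---precisely the mechanism you isolate. Your diagnosis of the gap in \cite{ma-ra/10} (that $V^{t,x}\in\partial\psi(Y^{t,x})$ and $V^{t',x'}\in\partial\psi(Y^{t',x'})$ hold only against their respective measures, so a naive pointwise monotonicity step is illegitimate) is exactly the point of the erratum.
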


Since we do not know how to estimate the last integral, we cannot obtain the
continuity of the process $Y^{t,x}$ with respect to the initial data, and
therefore, the continuity of application $\left(  t,x\right)  \mapsto
Y_{t}^{t,x}=:u\left(  t,x\right)  $ does not follow anymore directly from
inequality (\ref{cont Y}). Our main result, Theorem \ref{cont of Y and u},
constitutes the correct proof of point $\left(  c\right)  $ of Corollary 14
from \cite{ma-ra/10} which states the continuity of the function $u$.

\begin{acknowledgement}
The authors would like to thank the referees for their valuable remarks and
comments which have led to a significant improvement of the paper.\medskip

The work of the authors was supported by the projects ERC-Like no.
1ERC/02.07.2012 and IDEAS no. 241/05.10.2011.
\end{acknowledgement}


\begin{thebibliography}{99}                                                                                               %


\bibitem {am-mr/13}A. Aman, N. Mrhardy, \textit{Obstacle problem for SPDE with
nonlinear Neumann boundary condition via reflected generalized backward doubly
SDEs'}, Statist. Probab. Lett. 83 (2013), 863--874.\vspace{-0.15cm}

\bibitem {ba-ma-za/13}K. Bahlali, L. Maticiuc, A. Z\u{a}linescu,
\textit{Penalization method for a nonlinear Neumann PDE via weak solutions of
reflected SDEs}, Electron. J. Probab. 18 (2013), 1--19.\vspace{-0.15cm}

\bibitem {bo-ca/04}B. Boufoussi, J. van Casteren, \textit{An approximation
result for a nonlinear Neumann boundary value problem via BSDEs}, Stochastic
Process. Appl. 114 (2004), 331--350.\vspace{-0.15cm}

\bibitem {bo-ca-mr/07}B. Boufoussi, J. Van Casteren, N. Mrhardy,
\textit{Generalized backward doubly stochastic differential equations and
SPDEs with nonlinear Neumann boundary conditions}, Bernoulli 13 (2007),
423--446.\vspace{-0.15cm}

\bibitem {bu-ra/03}R. Buckdahn, A. R\u{a}\c{s}canu, \textit{On the existence
of stochastic optimal control of distributed state system}, Nonlinear Anal. 52
(2003), 1153--1184.\vspace{-0.15cm}

\bibitem {da/97}R.W.R. Darling, E. Pardoux, \textit{Backwards SDE with random
terminal time, and applications to semilinear elliptic PDE}, Ann. Probab. 25
(1997), 1135--1159.\vspace{-0.15cm}

\bibitem {di-ou/10}A. Diakhaby, Y. Ouknine, \textit{Reflected BSDE and Locally
Periodic Homogenization of Semilinear PDEs with Nonlinear Neumann Boundary
Condition}, Stoch. Anal. Appl. 28 (2010), 254--273.\vspace{-0.15cm}

\bibitem {ga-ra-ro/14}A. Gassous, A. R\u{a}\c{s}canu, E. Rotenstein,
\textit{Multivalued backward stochastic differential equations with oblique
subgradients}, accepted for publication in Stochastic Process. Appl.,
\href{http://arxiv.org/abs/1310.0977}{http://arxiv.org/abs/1310.0977}%
.\vspace{-0.15cm}

\bibitem {hsu/85}P. Hsu, \textit{Probabilistic approach to the Neumann
problem}, Comm. Pure Appl. Math. 38 (1985), 445--472.\vspace{-0.15cm}

\bibitem {hu/93}Y. Hu, \textit{Probabilistic interpretation for a system of
quasilinear elliptic partial differential equations under Neumann boundary
conditions}, Stochastic Process. Appl. 480 (1993), 107--121.\vspace{-0.15cm}

\bibitem {ja/97}A. Jakubowski, \textit{A non--Skorohod Topology on the
Skorohod space}, Electron. J. Probab. 2 (1997), 1--21.\vspace{-0.15cm}

\bibitem {le/02}A. Lejay, \textit{BSDE driven by Dirichlet process and
semi--linear Parabolic PDE. Application to Homogenization}, Stochastic
Process. Appl. 97 (2002), 1--39.\vspace{-0.15cm}

\bibitem {li-sz/84}P.L. Lions, A.S. Sznitman, \textit{Stochastic differential
equations with reflecting boundary conditions}, Comm. Pure Appl. Math. 37
(1984), 511--537.\vspace{-0.15cm}

\bibitem {ma-ra/07}L. Maticiuc, A. R\u{a}\c{s}canu, \textit{Viability of
moving sets for a nonlinear Neumann problem}, Nonlinear Anal. 66 (2007),
1587--1599.\vspace{-0.15cm}

\bibitem {ma-ra/10}L. Maticiuc, A. R\u{a}\c{s}canu, \textit{A stochastic
approach to a multivalued Dirichlet--Neumann problem}, Stochastic Process.
Appl. 120 (2010), 777--800.\vspace{-0.15cm}

\bibitem {ma-ra-sl/14}L. Maticiuc, A. R\u{a}\c{s}canu, L. S\l omi\'{n}ski, M.
Topolewski, \textit{C\`{a}dl\`{a}g Skorokhod problem driven by a maximal
monotone operator}, J. Math. Anal. Appl. 429 (2015), 1305--1346.\vspace
{-0.15cm}

\bibitem {ni 1/14}T. Nie, \textit{A stochastic approach to a new type of
parabolic variational} \textit{inequalities}, Stochastics 87 (2015),
477--517.\vspace{-0.15cm}

\bibitem {ni 2/14}T. Nie, \textit{Forward--backward stochastic differential
equation with subdifferential operator and associated variational inequality},
Sci. China Math. 58 (2015), 729--748.\vspace{-0.15cm}

\bibitem {pa/99}E. Pardoux, \textit{BSDEs, weak convergence and homogenization
of semilinear PDEs}, Nonlinear Analysis, Differential Equations and Control
(Montreal, QC, 1998), Kluwer Academic Publishers, Dordrecht (1999),
503--549.\vspace{-0.15cm}

\bibitem {pa-pe/92}E. Pardoux, S. Peng, \textit{Backward stochastic
differential equations and quasilinear parabolic partial differential
equations}, in Stochastic partial differential equations and their
applications (B.L. Rozovskii, R.B. Sowers eds.), LNCIS \textbf{176}, Springer
(1992), 200--217.\vspace{-0.15cm}

\bibitem {pa-pr-ra/97}E. Pardoux, F. Pradeilles, Z. Rao, \textit{Probabilistic
interpretation for a system of semilinear parabolic partial differential
equations}, Ann. Inst. Henri Poincar\'{e} Probab. Stat. 33 (1997),
467--490.\vspace{-0.15cm}

\bibitem {pa-ra/12}E. Pardoux, A. R\u{a}\c{s}canu, \textit{Stochastic
differential equations, Backward SDEs, Partial differential equations},
series:\ Stochastic Modelling and Applied Probability, vol. 69 (2014),
Springer.\vspace{-0.15cm}

\bibitem {pa-zh/98}E. Pardoux, S. Zhang, \textit{Generalized BSDEs and
nonlinear Neumann boundary value problems}, Probab. Theory Related Fields 110
(1998), 535--558.\vspace{-0.15cm}

\bibitem {pe/91}S. Peng, \textit{Probabilistic interpretation for systems of
quasilinear parabolic partial differential equations}, Stochastics 37 (1991),
61--74.\vspace{-0.15cm}

\bibitem {pr/04}Ph. Protter, \textit{Stochastic integration and differential
equations}, Second edition, series: Stochastic Modelling and Applied
Probability, vol. 21, (2004), Springer--Verlag, Berlin.\vspace{-0.15cm}

\bibitem {ra-zh/10}Q. Ran, T. Zhang, \textit{Existence and uniqueness of
bounded weak solutions of a semilinear parabolic PDE}, J. Theoret. Probab. 23
(2010), 951--971.\vspace{-0.15cm}

\bibitem {re-ot/10}Y. Ren, M. El Otmani, \textit{Generalized reflected BSDEs
driven by a L\'{e}vy process and an obstacle problem for PDIEs with a
nonlinear Neumann boundary condition}, J. Comput. Appl. Math. 233 (2010),
2027--2043.\vspace{-0.15cm}

\bibitem {re-xi/06}Y. Ren, N. Xia, \textit{Generalized reflected BSDE and an
obstacle problem for PDEs with a nonlinear Neumann boundary condition}, Stoch.
Anal. Appl. 24 (2006), 1013--1033.\vspace{-0.15cm}

\bibitem {ri/09}A. Richou, \textit{Ergodic BSDEs and related PDEs with Neumann
boundary conditions}, Stochastic Process. Appl. 119 (2009), 2945--2969.
\end{thebibliography}
\end{document}